\theoremstyle{plain}
\newtheorem{theorem}{Theorem}
\newtheorem{lemma}{Lemma}
\numberwithin{equation}{section}
\theoremstyle{definition}
\renewcommand{\geq}{\geqslant}
\renewcommand{\leq}{\leqslant}
\renewcommand{\mod}{\mathrm{mod}\,}
\newcommand{\changed}[1]{{\color{black} #1}}
\newsavebox\CBox
\newcommand\hcancel[2][0.5pt]{%
  \changed{\ifmmode\sbox\CBox{$#2$}\else\sbox\CBox{#2}\fi%
  \makebox[0pt][l]{\usebox\CBox}%
  \rule[0.5\ht\CBox-#1/2]{\wd\CBox}{#1}}}
\DeclareRobustCommand\widecheck[1]{{\mathpalette\@widecheck{#1}}}
\def\@widecheck#1#2{%
    \setbox\z@\hbox{\m@th$#1#2$}%
    \setbox\tw@\hbox{\m@th$#1%
       \widehat{%
          \vrule\@width\z@\@height\ht\z@
          \vrule\@height\z@\@width\wd\z@}$}%
    \dp\tw@-\ht\z@
    \@tempdima\ht\z@ \advance\@tempdima2\ht\tw@ \divide\@tempdima\thr@@
    \setbox\tw@\hbox{%
       \raise\@tempdima\hbox{\scalebox{1}[-1]{\lower\@tempdima\box
\tw@}}}%
    {\ooalign{\box\tw@ \cr \box\z@}}}
\begin{document}

\author{Valentin Blomer}

\address{Mathematisches Institut, Bunsenstr. 3-5, 37073 G\"ottingen, Germany} \email{vblomer@math.uni-goettingen.de}

\author{Xiaoqing Li}
\address{Department of Mathematics, 244 Mathematics Building, University at Buffalo, Buffalo, NY 14260-2900, USA}
\email{xl29@buffalo.edu}

\author{Stephen D. Miller}
\thanks{Li’s research was supported by NSF grant DMS-1303245.  Miller's research was supported by National Science Foundation grants DMS-1500562 and CNS-1526333.}
\address{Department of Mathematics, Hill Center,Busch Campus, Rutgers, 110 Frelinghuysen Rd., Piscataway, NJ 08854-8019, USA}
\email{miller@math.rutgers.edu}

 \title[Spectral reciprocity and nonvanshing for ${\rm GL}(4) \times {\rm GL}(2)$]{A Spectral reciprocity formula and non-vanishing for $L$-functions on ${\rm GL}(4) \times {\rm GL}(2)$}

\thanks{}

\keywords{Rankin-Selberg $L$-functions, non-vanishing, moments, spectral summation,  Kuznetsov formula, Voronoi summation}

\begin{abstract}  We introduce a new type of summation formula for central  values of ${\rm GL}(4) \times {\rm GL}(2)$ $L$-functions, when varied  over Maa{\ss} forms.  By rewriting such a sum in terms of
${\rm GL}(4) \times {\rm GL}(1)$ $L$-functions and applying a new ``balanced'' Voronoi formula, the sum can be shown to be equal to a differently-weighted   average of the same quantities.  By controlling the support of the spectral weighting functions on both sides, this    reciprocity formula gives estimates on spectral sums that were previously obtainable only for lower rank groups.  The ``balanced'' Voronoi formula has Kloosterman sums on both sides, and can be thought of  as the
 functional equation of a certain double Dirichlet series involving Kloosterman sums and ${\rm GL}(4)$ Hecke eigenvalues.
  As an application we show that for any self-dual cusp form $\Pi$ for ${\rm SL}(4,\Bbb{Z})$, there exist infinitely many  Maa{\ss}  forms $\pi$ for ${\rm SL}(2,\Bbb{Z})$ such that $L(1/2, \Pi \times \pi) \not= 0$.
\end{abstract}

\subjclass[2010]{Primary: 11M41, 11F72}

\setcounter{tocdepth}{2}  \maketitle

\maketitle

\section{Introduction}

This paper introduces a new method to investigate mean values of Rankin-Selberg $L$-functions. Our first main result is  an exact dualizing identity for sums of central  values of ${\rm GL}(4)\times {\rm GL}(2)$ $L$-functions (given in Theorem~\ref{thm3} below). It establishes a reciprocity phenomenon in the sense that families of $L$-functions associated with different and seemingly unrelated parts of the     ${\rm GL}(2)$-spectrum are put in an exact relation.

Sums of central $L$-values  are frequently used in subconvexity and nonvanishing theorems using techniques from analytic number theory.  These techniques were originally studied for $L$-functions related to automorphic forms on ${\rm GL}(2)$, with progress over the last fifteen years extending to ${\rm GL}(3)$.  However, all previous methods encounter barriers at ${\rm GL}(4)$.  One reason for this is the frequent use of Kuznetsov formulas to replace sums over ${\rm GL}(n)\times {\rm GL}(2)$ $L$-functions with sums over expressions more closely related to ${\rm GL}(n)\times {\rm GL}(1)$ $L$-functions.  For $n\le 3$ the Converse Theorem tells us that twists of $L$-functions of degree $n$ by ${\rm GL}(1)$ are sufficient (in principle) to determine all higher degree twists, whereas for $n=4$ twists by ${\rm GL}(2)$ $L$-functions are additionally required.

However, our formula goes against this philosophy and yields information about sums of ${\rm GL}(4)\times {\rm GL}(2)$ $L$-functions in terms of the simpler ${\rm GL}(4)\times {\rm GL}(1)$ $L$-functions.
Our main application (Theorem \ref{thm2} below) concerns the non-vanishing of ${\rm GL}(4)\times {\rm GL}(2)$ $L$-functions at the central point. More precisely, we answer in the affirmative the following question, for which ${\rm GL}(4)$ had previously been an analytic barrier:~given a  self-dual cuspidal automorphic representation  $\Pi$ on
${\rm GL}(4, \Bbb{Q})\backslash {\rm GL}(4, \Bbb{A})$, does there exist   a cuspidal automorphic representation $\pi$ on ${\rm GL}(2, \Bbb{Q}) \backslash {\rm GL}(2, \Bbb{A})$    such that $L(\frac 12, \Pi \times \pi) \not= 0$?
   For cohomological representations, strong methods based on $p$-adic interpolation (e.g.\ \cite{DJR}) are available to show non-vanishing of such twists.  In a different direction, approaches through period integrals   can sometimes show nonvanishing for ${\rm GL}(n) \times {\rm GL}(n-1)$ twists (see, for example, \cite{GJR,Harris}). Again this is mostly effective in cohomological situations.
For general automorphic forms it is mostly analytic techniques  that can be utilized, if at all. This is superficially similar to the Ramanujan conjecture, where there are many diverse tools   available for cohomological forms, but which   say nothing about the \emph{entire automorphic spectrum}. With purely analytic techniques, the ${\rm GL}(4)$-case is a well-known barrier, even in the  case of ${\rm GL}(1)$-twists considered in  \cite{Lu}.

A key tool in our reciprocity formula is a new Voronoi formula, described in Theorem~\ref{thm4} below. This formula   blends perfectly with the Kloosterman sums arising in the classical Kuznetsov formula, and leads to an interesting phenomenon:~the analysis of ${\rm GL}(4)\times {\rm GL}(2)$ $L$-functions is in this way reduced to an analysis of additive ${\rm GL}(1)$ twists of ${\rm GL}(4)$ Fourier coefficients (which is precisely  the typical use of  Voronoi formulas). In other words, in addition to reciprocity of ${\rm GL}(4)\times {\rm GL}(2)$ $L$-functions, there is also an exchange of information between ${\rm GL}(2)$ and ${\rm GL}(1)$ twists, and one can use lower rank twists to understand   higher rank twists. It is a fascinating question as to what extent this phenomenon can be transferred to more general groups.

Since this paper was first distributed, the ideas and methods have been used to develop  related spectral reciprocity  formulae  (see \cite{AK, BK1, BK2, Dj, Za}, which contain further applications). We now proceed to describe our results in detail.


\subsection{Spectral reciprocity identity}  Let $\Bbb{A}$ denote the adele ring of $\Bbb{Q}$ and fix   a  cuspidal automorphic representation $\Pi$ on ${\rm GL}(4, \Bbb{Q})\backslash {\rm GL}(4, \Bbb{A})$ (possibly, but not necessarily of cohomological type), which we assume to be unramified at all finite places.
Given a test function $h$ satisfying $h(t) \ll (1 + |t|)^{-A}$ for some $A > 4$, we define the following spectral mean value
\begin{equation}\label{defM}
\begin{split}
\mathcal{M}^{\pm}(h)  := \sum_{\pi} \epsilon_{\pi}^{(1\mp 1)/2} &\frac{L(1/2,   \Pi \times \pi ) }{L(1, \text{{\rm Sym}}^2 \pi)} h(t_{\pi})
\ + \\ & \frac{1}{2\pi} \int_{-\infty}^{\infty}  \frac{L(1/2 + it, \Pi)L(1/2 - it, \Pi)}{|\zeta(1 + 2 it)|^2} h(t) dt,
\end{split}
\end{equation}
where $\pi$ runs over all cuspidal automorphic representations of $\rm{GL}(2,\Bbb{A})$ corresponding to Maa{\ss} cusp forms   for ${\rm SL}_2(\Bbb{Z})$, and the parity $\epsilon_{\pi} \in \{\pm 1\}$ is the eigenvalue under the reflection operator $z \mapsto -\bar{z}$. In other words, the spectral average $\mathcal{M}^{-}(h)$ is twisted by the parity, while $\mathcal{M}^+(h)$ is not. The expression (\ref{defM}) is absolutely convergent by the convexity bounds (see \eqref{convex} below).  We also define a corresponding discrete series average
 \begin{equation}\label{defMhol}
\begin{split}
 \mathcal{M}^{\text{hol}}(h)    :=  \left.\sum_{\pi}\right.^{\text{hol}}   \frac{L(\frac 12,   \Pi \times \pi ) }{L(1, \text{{\rm Sym}}^2 \pi)} h(k_{\pi}),
\end{split}
\end{equation}
where the notation $\sum^{\text{hol}}$ indicates the sum is taken  over all  automorphic representations $\pi$ corresponding to classical holomorphic cusp forms of weight $k_{\pi}$  for ${\rm SL}(2,\Bbb{Z})$.
For $\diamondsuit \in \{+, -, \text{hol}\}$ we denote by $\widetilde{\mathcal{M}}^{\diamondsuit}$ the same expression as $ \mathcal{M}^{\diamondsuit}$, but with $\Pi$ replaced by its contragredient $\widetilde{\Pi}$ instead.
Let  $(\mu, \beta) \in \Bbb{C}^4 \times (\Bbb{Z}/2\Bbb{Z})^4$   denote the representation parameter of $\Pi$ as in \cite[\S1]{MS}, which  we assume (as we may after tensoring with a central character) satisfies
\begin{equation}\label{sumzero}
   \mu_1 + \mu_2+\mu_3 + \mu_4 = 0 \quad  \text{and} \quad   \beta_1 + \beta_2 + \beta_3 + \beta_4 \equiv 0 \  (\text{mod } 2)
 \end{equation}
 (see \text{\cite[(2.2)]{MS}}).  For example, if $\Pi$ is ${\rm SO}(4)$-fixed, then $\beta\equiv (0,0,0,0)$  or  $(1,1,1,1)$ (mod 2) depending on whether it corresponds to an even or odd automorphic form on ${\rm SL}(4,\Bbb{Z})\backslash {\rm SL}(4,\Bbb{R})/{\rm SO}(4)$.
Let
\begin{equation}\label{E01}
 \mathcal{E}_{0}(u)   :=    2 \cos\left(\frac{\pi}{2} u\right) \quad \text{and} \quad  \mathcal{E}_{1}(u)   :=  2i \sin\left(\frac{\pi}{2} u\right),
\end{equation}
where the index is understood as an element in $\Bbb{Z}/2\Bbb{Z}$, and define
\begin{equation}\label{defG}
\begin{split}
 \mathcal{G}^{\pm }(u)   :=   & \prod_{j=1}^4 \Gamma\left(u + \mu_j\right)  \Big( \mathcal{E}_{\beta_j}(u + \mu_j)    \mp     \mathcal{E}_{\beta_j+1}(u + \mu_j)\Big),
 \end{split}
 \end{equation}
 which are holomorphic in $\Re{u}>\frac 12-\delta$ for some constant $\delta$ depending only on $\Pi$ (see (\ref{deltarevised2})).
Moreover, let $\mathcal{C}^+(u, r) = \cos(\pi u/2)$, $\mathcal{C}^-(u, r) = \cosh(\pi r)$, and
\begin{equation}\label{kernel1}
 \begin{split}
 \mathcal{K}^{\pm}(t, r) :=  & \cosh(\pi t) \int_{\Re u=v}   \mathcal{G}^{\pm}\left(\frac{1-u}{2}\right) \mathcal{C}^{\pm}(u, r) \\
 &\times \Gamma(u/2 + it)\Gamma(u/2 - it)\Gamma(u/2 + ir)\Gamma(u/2 - ir)
   \frac{du}{2\pi i}.
   \end{split}
\end{equation}
The integrand is holomorphic in $0 < \Re u < 2\delta$ when $t,r\in\Bbb{R}$; the path of integration may be taken to be  $\Re u =v = \delta$.  We also define
\begin{equation}\label{kernel2}
\mathcal{K}^{\text{hol}}(t, k)    :=   \mathcal{K}^{+}\left(t, \frac{k-1}{2i}\right)
\end{equation}
for $k \in 2\Bbb{N}=\{2,4,6,\ldots\}.$
The integrals defining  $\mathcal{K}^{\diamondsuit}$ for $\diamondsuit \in \{+, -, \text{hol}\}$ are absolutely convergent for $t,r\in\Bbb{R}$ and $k\in 2\Bbb{N}$.

\begin{theorem}\label{thm3} Let $\Pi$ be a self-dual  cuspidal automorphic representation on ${\rm GL}(4,\Bbb{Q})\backslash {\rm GL}(4,\Bbb{A})$, which is unramified at all finite places.   Suppose that for some constant 
 $C_1 \geq 40$ the test function $h : \{t \in \Bbb{C} : |\Im t| < C_1 \} \rightarrow \Bbb{C}$
 \begin{equation}\label{c1}
 \begin{split}
&  \text{is holomorphic,  and satisfies
   $ h(t) \ll (1+|t|)^{-C_1}$}
   \\ & \text{
 and $h\left(\pm \left(n - \textstyle\frac{1}{2}\right) i\right) = 0$ for $n \in \Bbb{N}\cap[1,C_1].$}
\end{split}
 \end{equation}    Then
\begin{equation}\label{formula}
\mathcal{M}^-(h)  =   \widetilde{\mathcal{M}}^-(h^-) +  \widetilde{\mathcal{M}}^+(h^+) + \widetilde{\mathcal{M}}^{\text{{\rm hol}}}(h^{\text{{\rm hol}}}),
 \end{equation}
 where
 \begin{equation*}
  h^{\diamondsuit}(r)   :=    \frac{1}{8\pi^2} \int_{-\infty}^{\infty}  \mathcal{K}^{\diamondsuit}(t, r) h(t) t \tanh(\pi t) \frac{dt}{2\pi^2}
 \end{equation*}
is absolutely convergent  for each $\diamondsuit \in \{+, -, \text{hol}\}$. Moreover, the spectral sums $\widetilde{\mathcal{M}}^-(h^-)$, $\widetilde{\mathcal{M}}^+(h^+)$, and $\widetilde{\mathcal{M}}^{\text{{\rm hol}}}(h^{\text{{\rm hol}}})$   are each absolutely convergent, and
 \begin{equation}\label{small}
\mathcal{K}^{+}(t, r)    \ll    e^{-\frac{\pi}{2} |r|} (1 + |t|)^{-1}
 \end{equation}
 for $t, r \in \Bbb{R}$.
  \end{theorem}

The integral kernels  $\mathcal{K}^{\diamondsuit}(t, r)$  will be further discussed  in Section \ref{interlude}, where they are computed explicitly  in terms of $_4F_3$ hypergeometric functions. 
%
In practical situations, the terms $\widetilde{\mathcal{M}}^{+}(h^+)$ and $\widetilde{\mathcal{M}}^{\text{hol}}(h^{\text{hol}})$ are easy to bound and/or small, so that we  have a ``reciprocity formula''
\begin{equation}\label{reci}
\mathcal{M}^-(h)  \rightsquigarrow    \widetilde{\mathcal{M}}^-(h^{-}).
\end{equation}
An analysis of the $T$-dependence   shows that if $h(t)$ is (essentially) supported on $t \ll T$, then $h^{-}(r)$ is of size $T$ for  $r \ll 1$ and very small otherwise. Refining this argument, if $h(t)$ is (essentially) supported on a short segment $(T-M,T+M)$ for $T^{1/3} \leq M \leq T$, then $h^{-}(r)$ is essentially supported on  $r \ll T/M$ and very small otherwise. This is a typical ``duality''  phenomenon of automorphic summation formulae.

Formula (\ref{formula}) is motivated by earlier work of Kuznetsov and Motohashi.
If $\Pi$ is replaced by a minimal parabolic Eisenstein series, the spectral mean value $\mathcal{M}^{\pm}(h)$ features a fourth moment of ${\rm GL}(2)$  $L$-functions   in the cuspidal sum, along with the eighth moment of the Riemann zeta-function in the contribution of the continuous spectrum. A similar reciprocity formula to (\ref{formula}) in this case was   envisaged by Kuznetsov, and completed by Motohashi \cite{Mo3}. As far as we know, this
interesting formula has not yet been used in applications.

 \subsection{Balanced Voronoi summation}

One of the tools in the proof of Theorem \ref{thm3} is the analytic continuation and  functional equation of a certain  double Dirichlet series, which may be of independent interest. Let $a_{\Pi}(n_1, n_2, n_3)=a_{\Pi}(|n_1|, |n_2|, |n_3|)$ denote the abelian  coefficients of $\Pi$ (see Section~\ref{sec2}), normalized such that $a_{\Pi}(1, 1, 1) = 1$; they are the Hecke eigenvalues of $\Pi$ when $n_1,n_2,n_3>0$. Recall that 
$a_{\widetilde{\Pi}}(n_1, n_2, n_3) = a_{\Pi}(n_3, n_2, n_1) = \overline{a_{\Pi}(n_1, n_2, n_3)}$.  For $\epsilon \in \Bbb{Z}/2\Bbb{Z}$ denote
\begin{equation}\label{defV}
\mathcal{V}_{\epsilon}(s, z)  :=   \sum_{n \in \Bbb{Z} \setminus \{0\}} \sum_{m, c > 0} \frac{ \text{sgn}(n)^{\epsilon} a_{\Pi}(n, m, 1)}{(|n|m^2)^z} \frac{S(n, 1, c)}{c} \left(\frac{|n|}{c^2}\right)^{-s},
\end{equation}
and write $\widetilde{\mathcal{V}}_{\epsilon}$ for the same expression with $a_{\widetilde{\Pi}}(n, m, 1) = a_{\Pi}(1, m, n)$ instead of $a_{\Pi}( n, m, 1)$.
The multiple sum is absolutely convergent in the range
$$\Re s < -1/4, \quad \Re(s+z) > 1,$$
as follows from the Weil bound on Kloosterman sums and the estimate (\ref{RSbound}) below.
The following result is a consequence of   the ``balanced'' Voronoi formula given in Theorem~\ref{thm:balanced}, which is itself a consequence of the usual GL(4) Voronoi formula from \cite{MS} (though packaged in a very different way).

 \begin{theorem}\label{thm4} The function $\mathcal{V}_{ \epsilon}(s, z)$ has holomorphic  continuation to the region  $$\{(s,z)\in\Bbb{C}^2  \mid  \Re (s + 2z) > 5/4,  \Re z > 5/4, \,  \text{{\rm and }} \Re{s}<-1/4\}$$ and satisfies the functional equation
 \begin{equation}\label{voronoiFE}
   \mathcal{V}_{\epsilon}(s, z)    =   \mathcal{G}_{\epsilon}(1 - s - z)\widetilde{\mathcal{V}}_{ \epsilon}(1-s-2z, z),
 \end{equation}
 where
 \begin{equation}\label{curlyGeps}
 \mathcal{G}_{\epsilon}(s)    :=    \prod_{j=1}^4 \frac{\Gamma(s+\mu_j)}{(2\pi)^{s+\mu_j} } \mathcal{E}_{\epsilon + \beta_j}(s+\mu_j)
 \end{equation}
with $\mathcal{E}$ as in \eqref{E01}.
 \end{theorem}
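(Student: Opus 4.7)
I would deduce \eqref{voronoiFE} as the Dirichlet-series incarnation of the balanced Voronoi formula of Theorem~\ref{thm:balanced}. For each fixed $m$, that formula asserts an identity of the form
\begin{equation*}
\sum_{\substack{n\neq 0\\ c>0}} a_\Pi(n,m,1)\,\frac{S(n,1,c)}{c}\,F\!\left(\frac{n}{c^{2}}\right) \;=\; \sum_{\substack{n'\neq 0\\ c'>0}} a_\Pi(1,m,n')\,\frac{S(n',1,c')}{c'}\,(\mathcal{T}F)\!\left(\frac{n'}{c'^{\,2}}\right),
\end{equation*}
for suitable test functions $F$, where $\mathcal{T}$ is an integral transform built from the archimedean GL(4) Voronoi kernel of $\Pi$. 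The strategy is to apply this identity with $F$ equal to the signed power function
\begin{equation*}
F_w(y) \;:=\; \text{sgn}(y)^{\epsilon}\,|y|^{-w}
\end{equation*}
for a suitable exponent $w$.

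First, the algebraic rearrangement
\begin{equation*}
(|n|m^{2})^{-z}\,c^{-1}\,(|n|/c^{2})^{-s} \;=\; m^{-2z}\,c^{-2z}\,\cdot\,c^{-1}\,(|n|/c^{2})^{-(s+z)}
\end{equation*}
identifies, in the region of joint absolute convergence,
\begin{equation*}
\mathcal{V}_{\epsilon}(s,z) \;=\; \sum_{m>0} m^{-2z}\, \sum_{c>0} c^{-2z}\, \sum_{n\neq 0} a_\Pi(n,m,1)\,\frac{S(n,1,c)}{c}\,F_{s+z}\!\left(\frac{n}{c^{2}}\right).
\end{equation*}
Next, a direct Mellin-transform computation of the action of $\mathcal{T}$ on the signed power $F_w$ yields $\mathcal{T}F_w = \mathcal{G}_\epsilon(1-w)\,F_{1-w}$: the four $\Gamma$-factors in \eqref{curlyGeps} arise as the Mellin transform of the Bessel-type kernel of the GL(4) Voronoi formula, while the trigonometric factors $\mathcal{E}_{\epsilon+\beta_j}$ arise from combining the two $\pm$-pieces of the Voronoi kernel -- each a mixture of sines and cosines dictated by the archimedean parities $\beta_j$ of $\Pi$ -- with the sign weight $\text{sgn}(n)^{\epsilon}$. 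Substituting back and using $a_\Pi(1,m,n')=a_{\widetilde{\Pi}}(n',m,1)$, a power-matching computation parallel to the one above collapses the right-hand side to $\mathcal{G}_\epsilon(1-s-z)\cdot\widetilde{\mathcal{V}}_\epsilon(1-s-2z,z)$, which is exactly \eqref{voronoiFE}.

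The main technical obstacle is that the power $F_w$ is not Schwartz, so Theorem~\ref{thm:balanced} does not apply to it directly. I would resolve this in the standard way: first prove the identity for a rapidly decaying $F$, then take its Mellin transform in the $w$-variable and extract \eqref{voronoiFE} via a contour shift, interpreted throughout as an identity of meromorphic functions. The joint region $\Re s<-1/4$, $\Re(s+2z)>5/4$, $\Re z>5/4$ claimed in the theorem is precisely the one in which the two sides of \eqref{voronoiFE} can be matched and analytically continued: the Weil bound on Kloosterman sums together with the Rankin--Selberg estimate \eqref{RSbound} on the GL(4) Hecke coefficients controls absolute convergence of $\widetilde{\mathcal{V}}_\epsilon(1-s-2z,z)$ through the condition $\Re(s+2z)>5/4$, while $\Re z>5/4$ controls the $m$-sum on both sides and $\Re s<-1/4$ controls the $c$-sum via the Kloosterman bound. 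With all these ingredients in place, \eqref{voronoiFE} both furnishes and characterises the holomorphic continuation of $\mathcal{V}_\epsilon(s,z)$ to this region.
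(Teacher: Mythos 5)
Your plan misreads what Theorem~\ref{thm:balanced} actually provides, and this is a genuine gap rather than a presentational one. The balanced formula is \emph{not} a fixed-$m$ identity in which $m$ and the Kloosterman modulus $c$ are decoupled: in \eqref{curlyLdef} the level $N$ is fixed, the sum runs over divisors $m\mid N$ with modulus $c=N/m$, and the functional equation \eqref{curlyLFE} carries the scaling factor $N^{2-4s}$. This $m$--$c$ coupling is exactly the ``special feature of the GL(4) Hecke algebra'' the construction depends on, and the modulus power is indispensable: it is what converts the naive shift $s+z\mapsto 1-s-z$ in the $n$-variable into the shift $s\mapsto 1-s-2z$ of the first argument in \eqref{voronoiFE}, i.e.\ into the dual $c$-exponent $c^{1-2s-4z}$. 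Indeed \eqref{voronoiFE} is false termwise in $(m,c)$ (the $c$-powers $c^{2s-1}$ and $c^{1-2s-4z}$ cannot match term by term); it only becomes a termwise identity after regrouping $N=mc$, which is precisely how the paper proceeds: one writes $\mathcal{V}_\epsilon(s,z)=\sum_{N>0}N^{2s-1}\mathcal{L}_{N,\epsilon}(s+z,\Pi)$ and applies \eqref{curlyLFE} to each $N$, the factor $N^{2-4(s+z)}$ recombining with $N^{2s-1}$ to give $\widetilde{\mathcal{V}}_\epsilon(1-s-2z,z)$. Your displayed fixed-$m$ summation formula, with both $c$ and $c'$ summed internally and no modulus scaling in $\mathcal{T}$, is neither the statement of Theorem~\ref{thm:balanced} nor deducible from it. Moreover, even granting such a formula, your specialization to $F_{s+z}$ does not reconstruct $\mathcal{V}_\epsilon(s,z)$: in your own rearrangement the leftover weight $c^{-2z}$ is not a function of $n/c^2$ and so cannot be carried through an identity whose $c$-sum is internal; and the inner $(n,c)$-sum against a pure power $F_w$ is never absolutely convergent for any single $w$ (the $n$-sum needs $\Re w$ large while the Weil bound forces $\Re w<-1/4$ for the $c$-sum), so there is no regime in which the proposed substitution even makes sense. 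The two-variable structure of $\mathcal{V}_\epsilon$ is what resolves this tension, and your reduction to the single variable $w=s+z$ discards it.

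The continuation statement is also not addressed by ``contour shift, interpreted as an identity of meromorphic functions.'' The region in the theorem contains points (those with $0<\Re(s+z)\leq 1$, say) at which neither the defining series \eqref{defV} of $\mathcal{V}_\epsilon(s,z)$ nor that of $\widetilde{\mathcal{V}}_\epsilon(1-s-2z,z)$ converges, so convergence of one side or the other cannot ``furnish'' the continuation there. What the paper uses is that each $\mathcal{L}_{N,\epsilon}(\cdot,\Pi)$ is entire of finite order, so that \eqref{RSbound} with Weil's bound ($\Re>1$), the functional equation ($\Re<0$), and Phragm\'en--Lindel\"of in the strip $0\leq\Re\leq 1$ yield the $N$-aspect bounds $\mathcal{L}_{N,\epsilon}(s,\Pi)\ll N^{1/2+\varepsilon}$, $N^{5/2-2\Re s+\varepsilon}$, $N^{5/2-4\Re s+\varepsilon}$ respectively; these make $\sum_N N^{2s-1}\mathcal{L}_{N,\epsilon}(s+z,\Pi)$ converge exactly in the stated region $\{\Re(s+2z)>5/4,\ \Re z>5/4,\ \Re s<-1/4\}$. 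Some version of this convexity-in-$N$ input is needed in any proof, and it is absent from your proposal. To repair the argument, replace the fixed-$m$ formula by the $N$-grouping above and run the convexity estimate; the Mellin/test-function picture you describe is a reasonable heuristic for why $\mathcal{G}_\epsilon$ appears, but it cannot substitute for the $N=mc$ bookkeeping.
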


\noindent
Notice that
 \begin{equation}\label{similar}
    \mathcal{G}^{\pm}(u)   =  (2\pi)^{4u} (\mathcal{G}_0(u) \mp \mathcal{G}_1(u))
 \end{equation}
because of \eqref{sumzero} and \eqref{defG}.
When integrated against the Mellin transform of a test function, (\ref{voronoiFE}) assumes the symmetric, equivalent form of a  summation formula for   sums of abelian  coefficients times Kloosterman sums.  This formula was first obtained by the second two named authors, and has since been generalized to arbitrary sums of GL($n$) Fourier coefficients weighted by hyper-Kloosterman sums in \cite{Zh,MZ}.

 \subsection{Application:~non-vanishing of $L$-functions} Let $1 \leq m < n$ be two positive integers.  Given a  cuspidal automorphic representation  $\Pi$ on ${\rm GL}(n, \Bbb{Q})\backslash {\rm GL}(n, \Bbb{A})$, does there exist   a self-dual cuspidal automorphic representation $\pi$ on ${\rm GL}(m, \Bbb{Q}) \backslash {\rm GL}(m, \Bbb{A})$   such that $L(\frac 12, \Pi \times \pi) \not= 0$?    The case $m=1$ (i.e.,\ $\pi$ corresponds to a quadratic Dirichlet character) is one of the prime applications of the theory of multiple Dirichlet series, through which it is possible to answer this question affirmatively for  $n \in \{2, 3\}$ (see \cite{Bu,HK}). The case $n=3$ and $m=2$ follows  from  a result of the second named author  \cite[Theorem 1.1]{Li}

For $n \geq 4$ and any value of $m$, the problem becomes very hard and is completely open if one is interested in results that hold across the entire spectrum and are not restricted to cohomological representations.  This is not particularly surprising in view of lack of progress on related problems (such as subconvexity and counts for zeros on the critical line), which remain open for cuspidal automorphic $L$-functions on ${\rm GL}(n)$, $n \geq 4$. Indeed, even already on ${\rm GL}(3)$ the state-of-the-art analytic machinery (e.g., spectral summation formulae, multiple Dirichlet series, and period formulae) turns out to be unsuccessful for many problems which have long been settled for ${\rm GL}(2)$.

In this article we solve the non-vanishing problem in the case $(n, m) = (4, 2)$ when  $\Pi$ is self-dual and unramified at all finite places  (i.e.,  $\Pi$ is associated to a cusp form on ${\rm GL}(4,\Bbb{Z})\backslash {\rm GL}(4,\Bbb{R})$). The most direct approach, i.e.\ asymptotically evaluating a quantity of the shape
\begin{equation*}
\sum_{\pi} e^{-(t_{\pi}/T )^2} \frac{L(1/2, \Pi \times \pi)}{L(1, \text{Sym}^2 \pi)},
\end{equation*}
fails, at least with currently available tools. It quickly leads to a ``deadlock'', as we shall describe at the end of the introduction in more detail.  This phenomenon is well-known to experts and is a serious in many applications of analytic number to automorphic forms.  Therefore we introduce a different path which we now proceed to describe.

Our nonvanishing result will be shown as a consequence of Theorem~\ref{thm3}, more precisely a  quantitative version that comes as a corollary of it.  Let $D \geq 50$ be  a   fixed  integer and define
\begin{equation}\label{defH}
h_T(t) := e^{-(t/T)^2} P_T(t), \quad  \text{where} \quad  P_T(t) := \!\left(\prod_{n=1}^D  t^2 + \left(\frac{2n-1}{2}\right)^2\right)^2\!\! T^{-4D}
\end{equation}
for  a   parameter $T > 1$ (that will be taken large). 

\begin{theorem}\label{thm1} Let $\Pi$ be a cuspidal automorphic representation of ${\rm GL}(4, \Bbb{A})$ which is unramified at all finite places, and let   $h_T$ be as in \eqref{defH} 
for  some fixed integer $D \geq 50$. Then 
\begin{equation}\label{mainbound}
\mathcal{M}^-(h_T) \ll T \quad  \text{for} \quad T > 1,
\end{equation}
where the implied constant depends only on $\Pi$  and $D$.
\end{theorem}
Henceforth we shall fix the cusp form $\Pi$ and not display the implicit dependence of any constants on it.
If the generalized Lindel\"of hypothesis is true, then by Weyl's law the  $\pi$-sum in \eqref{defM} has roughly $T^2$ terms that are essentially bounded, so   Theorem \ref{thm1} achieves square root cancellation  and its linear exponent of $T$ is expected to be  best possible. Notice that \eqref{mainbound} is a \emph{pure bound that contains no $\varepsilon$ in the exponent}, which is absolutely crucial for our application. 

 Of course, due to varying signs Theorem \ref{thm1} cannot be used to bound the individual terms in the sum \eqref{defM}.  Nevertheless, if $\Pi$ is self-dual we can still solve the non-vanishing problem  mentioned at the beginning of the paper for $(n,m)=(4,2)$. In this case we have
\begin{equation*}
  L(\textstyle{\frac 12} + it, \Pi)L(\textstyle{\frac 12} - it, \Pi) = L(\textstyle{\frac 12} + it, \Pi)L(\textstyle{\frac 12} - it, \widetilde{\Pi}) = |L(\textstyle{\frac 12} + it, \Pi)|^2,
\end{equation*}
where $\widetilde{\Pi}$ denotes the contragredient of $\Pi$.
It follows from the lower bound technique of Rudnick and Soundararajan \cite{RS} that the Eisenstein term in \eqref{defM} is
\begin{equation}\label{lower}
\frac{1}{2\pi} \int_{-\infty}^{\infty}  \frac{ |L(1/2 + it, \Pi)|^2}{|\zeta(1 + 2 it)|^2} h_T(t) dt    \gg   T \log T
\end{equation}
for $h_T$ as in \eqref{defH} and $T$ sufficiently large, as we show in  Lemma~\ref{lem:lowerbound}. Comparing with \eqref{mainbound}, we immediately conclude  that not all terms in the $\pi$-sum in \eqref{defM} can vanish  and, quite surprisingly,  there must be a small bias towards odd cusp forms:~the (conjecturally) non-negative values $L(\frac 12, \Pi \times \pi)$ are typically a little bit larger for odd cusp forms $\pi$ than for even $\pi$,   to counteract the contribution of the (always even) Eisenstein series. This yields the main application of the reciprocity formula \ref{thm3}, and to our knowledge is the first time that a non-vanishing result is deduced from an \emph{oscillating} mean value result.

\begin{theorem}\label{thm2} Let $\Pi$ be a self-dual  cuspidal automorphic representation on ${\rm GL}(4,\Bbb{Q})\backslash {\rm GL}(4,\Bbb{A})$ which is unramified at all finite places.  Then there exist infinitely many   cuspidal automorphic representations $\pi$  associated to odd  Maa{\ss} cusp forms for ${\rm SL}(2, \Bbb{Z})$   such that $L(\frac 12,   \Pi \times \pi) \not= 0$.

Quantitatively, for every constant $c_0 \geq 1$ there exists $T_0 = T_0(\Pi, c_0) > 10$  such that for every $T \geq T_0$  at least $c_0 \log\log T$ representations $\pi$ with spectral parameter $t_{\pi} \leq T$ satisfy $L(1/2,   \Pi \times \pi) \not= 0$.
\end{theorem}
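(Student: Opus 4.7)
My plan is to convert the spectral bound of Theorem~\ref{thm1} into a one-sided inequality for a positive sum over odd Maa{\ss} forms, and then iterate the bound at a sequence of growing scales.

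To start, every Maa{\ss} form for $\mathrm{SL}(2,\mathbb{Z})$ is self-dual (its Hecke eigenvalues are real), so Lapid's theorem on non-negativity of self-dual Rankin--Selberg central values gives $L(1/2,\Pi\times\pi) \geq 0$, and the coefficients $c_\pi := L(1/2,\Pi\times\pi)/L(1,\mathrm{Sym}^2\pi)$ are non-negative. Theorem~\ref{thm1} says $\mathcal{M}^-(h_T) \ll T$; combining with \eqref{lower} (which uses the self-duality of $\Pi$) and the parity decomposition of $\mathcal{M}^-$ yields, for $T \geq T_\star(\Pi)$,
\begin{equation*}
\sum_{\pi \text{ odd}} c_\pi h_T(t_\pi) \;=\; E(h_T) - \mathcal{M}^-(h_T) + \sum_{\pi \text{ even}} c_\pi h_T(t_\pi) \;\geq\; c\,T\log T
\end{equation*}
for some $c > 0$, where $E(h_T)$ denotes the Eisenstein integral in \eqref{defM} and the even contribution was dropped using $c_\pi h_T(t_\pi) \geq 0$.

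Next I would enumerate the $t_\pi$'s of odd $\pi$'s with $L(1/2,\Pi\times\pi) \neq 0$ in increasing order as $\tau_1 < \tau_2 < \dots$ and prove by iteration the recursive bound $\tau_{k+1} \leq \tau_k^{(1+1/D)^2}$ (the case $k = 0$ with an empty inner sum produces $\tau_1$). At step $k$, apply the previous lower bound at scale $T_k := \tau_k^{1+1/D}$ and split the sum over odd $\pi$ with $L \neq 0$ into three parts according to whether $t_\pi \leq \tau_k$, $\tau_k < t_\pi \leq T_k^{1+1/D}$, or $t_\pi > T_k^{1+1/D}$. Using convexity $L(1/2,\Pi\times\pi) \ll t_\pi^{2+\epsilon}$ and the polynomial bound $h_{T_k}(t) \ll (|t|/T_k)^{4D}$ coming from the factor $P_{T_k}$, the first part is $\ll k\,\tau_k^{\epsilon-2}$. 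For the third part, combining Weyl's law for spectral density with the Gaussian decay of $h_{T_k}$ at $t \gg T_k$ gives $\ll T_k^{O_D(1)}\exp(-T_k^{2/D})$. Both are $o(T_k\log T_k)$, so the middle part must be $\gg T_k\log T_k > 0$. In particular it is nonempty, forcing the existence of an odd $\pi$ with $\tau_k < t_\pi \leq T_k^{1+1/D} = \tau_k^{(1+1/D)^2}$ and $L(1/2,\Pi\times\pi) \neq 0$, namely $\tau_{k+1}$.

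Iterating gives $\log\tau_k \leq (1+1/D)^{2(k-1)}\log\tau_1$, so $\tau_k \leq T$ holds for every $k \leq (D/2)\log\log T - O_D(1)$. Given any $c_0 \geq 1$, taking $D = \max(2c_0, 50)$ in \eqref{defH} produces at least $c_0\log\log T$ odd cuspidal $\pi$'s with $t_\pi \leq T$ and $L(1/2,\Pi\times\pi) \neq 0$, for every $T \geq T_0(\Pi,c_0)$. The key new ingredients beyond Theorem~\ref{thm1} are (a) Lapid's non-negativity, which is what allows us to extract the odd sum cleanly from the signed average, and (b) the tail bound on part (iii), where we have no a priori control on the number of nonvanishing $L$-values. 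The main obstacle is (b), but the super-polynomial decay of $h_T$ beyond its bump saves the day; it is crucial here that the implicit constant in Theorem~\ref{thm1} depends only on $\Pi$ and $D$, so that we may freely increase $D$ to accommodate an arbitrary $c_0$.
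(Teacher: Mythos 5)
Your overall strategy coincides with the paper's. The proof in Section~\ref{sec6} likewise combines Theorem~\ref{thm1} with Lemma~\ref{lem:lowerbound} to obtain $-\sum_\pi \epsilon_\pi\, c_\pi\, h_T(t_\pi)\gg T\log T$ (writing $c_\pi=L(1/2,\Pi\times\pi)/L(1,{\rm Sym}^2\pi)$), and then localizes exactly as you do: fixing $\varepsilon>0$ and taking $D=50+\varepsilon^{-1}$ in \eqref{defH}, the convexity bound \eqref{convex} together with the factor $P_T$ (polynomial smallness for $t_\pi\le T^{1-\varepsilon}$) and the Gaussian decay (for $t_\pi\ge T^{1+\varepsilon}$) shows that the contribution outside $[T^{1-\varepsilon},T^{1+\varepsilon}]$ is $O_{\Pi,\varepsilon}(1)$, so each such window contains a nonvanishing central value; counting disjoint windows then gives $\gg \varepsilon^{-1}(\log\log X-c_2)$ values below $X$, with $\varepsilon^{-1}$ playing the role of your $D$. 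Your adaptive recursion $\tau_{k+1}\le\tau_k^{(1+1/D)^2}$ is the same mechanism in different packaging, and your estimates for the two tail ranges are the same convexity-plus-test-function bounds; the remaining bookkeeping (starting the recursion at a scale large enough for Theorem~\ref{thm1} and \eqref{lower} to apply, and taking $D$ somewhat larger than $2c_0$ to absorb the $O_D(1)$ losses) is routine.

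The one genuine gap is the opening appeal to Lapid. Self-duality of the two factors is not sufficient for Lapid's non-negativity theorem: the Rankin--Selberg pair must be of symplectic type, and since a level-one Maa{\ss} form is of symplectic type (its exterior square $L$-function is $\zeta$), this forces $\Pi$ to be of \emph{orthogonal} type --- exactly the caveat recorded in the paper right after Theorem~\ref{thm2} (``the non-negativity of $L(1/2,\Pi\times\pi)$ is known when $\Pi$ is of orthogonal type \cite{lapid}''). For a symplectic-type self-dual $\Pi$, which the theorem also covers, your very first inequality --- dropping the even part using $c_\pi h_T(t_\pi)\ge 0$ --- is therefore unjustified. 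By contrast, the paper's proof never invokes positivity: it works directly with the signed sum and concludes only that some central value in each window is nonzero; the parity refinement is not addressed explicitly there, and extracting ``odd'' from the signed inequality is precisely the step that needs non-negativity for the even forms. Your recursion is repaired for general self-dual $\Pi$ by enumerating the nonvanishing values over both parities and using $|L(1/2,\Pi\times\pi)|/L(1,{\rm Sym}^2\pi)\ll t_\pi^{2+\varepsilon}$ from \eqref{convex} in the two tail ranges; this recovers the full quantitative count, while the assertion that the representations can be taken odd should be stated as resting on non-negativity for even forms, i.e.\ on $\Pi$ being of orthogonal type (or on the conjectural non-negativity in general).
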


There are various different types of self-dual cuspidal automorphic representations $\Pi$ on ${\rm GL}(4)$.  For example, $\Pi$ can be a Rankin-Selberg lift from ${\rm GL}(2)\times {\rm GL}(2)$, a symmetric cube lift from ${\rm GL}(2)$, or more generally a lift from ${\rm O}(3,2)$ or ${\rm Sp}(4)$.  The non-negativity of $L(1/2,\Pi\times \pi)$ is known when $\Pi$ is of orthogonal type \cite{lapid}.

\subsection{Heuristics and concluding remarks} It is instructive to see a back-of-the-envelope computation that leads to \eqref{mainbound}. The analysis is based on the well-proven  Kuznetsov-Voronoi-Kuznetsov triad:~starting with an approximate functional equation, we need to bound an expression very roughly of the form
\begin{equation}\label{negative1}
\sum_{t_{\pi} \ll T}  \epsilon_{\pi}  \sum_{n \ll T^4} \frac{\lambda_{\Pi}(n) \lambda_{\pi}(n)}{n^{1/2}}   =  \sum_{t_{\pi} \ll T}    \sum_{n \ll T^4} \frac{\lambda_{\Pi}(n) \lambda_{\pi}(-n)}{n^{1/2}},
\end{equation}
where $\lambda_{\Pi}$ and $\lambda_{\pi}$ denote the Hecke eigenvalues of $\Pi$ and $\pi$ respectively.
(Strictly speaking, this sum along with the others in this paragraph should be taken with smooth cutoffs.)
 Using the   ``opposite-sign'' Kuznetsov formula gives an expression involving a sum over Kloosterman sums, which is very roughly of the shape
\begin{equation*}\label{negative2}
T \sum_{n \ll T^4} \frac{\lambda_{\Pi}(n)}{n^{1/2}} \sum_{c \ll T} \frac{ S(-n, 1, c)}{c}.
\end{equation*}
  There are no additional  oscillatory terms in this sum because the integral kernel (\ref{ker}) features the $K$-Bessel function (cf.\ \cite[Lemma 3.8]{BK}, for example).
This is followed by an application of the Voronoi summation formula on ${\rm GL}(4)$. The dual $n$-sum will then be essentially of bounded length, and
 since $2-4 = -2$,  the ${\rm GL}(2)$ Kloosterman sum $S(-n, 1, c)$ remains essentially invariant under the ${\rm GL}(4)$ Voronoi formula.  We make this precise in Theorem \ref{thm4} below  in terms of a ``balanced'' Voronoi formula that includes   Kloosterman sums on both sides.  One  obtains from this an expression  roughly of the form
 \begin{equation}\label{negative3}
 T \sum_{n \ll 1} \lambda_{\Pi}(n)  \sum_{c \ll T} \frac{1}{c} S(-n, 1, c),
 \end{equation}
 after which  the Kuznetsov formula is applied again -- but in reverse.  This sequence is not involutory and yields full square-root cancellation in the $c$-sum; it therefore bounds \eqref{negative3} by $O(T)$, at least if there are no exceptional eigenvalues (which is known for the full-level modular group ${\rm SL}(2,{\Bbb Z})$).

 This   heuristic reasoning described here is of course insensitive to $\varepsilon$-powers.  In order to delicately obtain the $O(T)$ estimate in Theorem~\ref{thm1} we will use   a more structural approach to studying the spectral mean value \eqref{defM}. In addition, it is crucial for Theorem~\ref{thm2} that (\ref{defV}) includes the precise Kloosterman sum arising from the Kuznetsov formula.  Here one cannot simply open up the Kloosterman sum and apply Voronoi summation in its usual form \cite{MS} as in the subconvexity results of \cite{Sarnak,Lisubc}, because this entails the loss of a multiplicative term of size $T^{\varepsilon}$ owing to appearance of   additional divisor  sums.  Rather, it is a special feature of the GL(4) Hecke algebra that produces the precise form of (\ref{voronoiFE}), and allows us to cleanly avoid those extra factors.  Without this the $O(T)$ bound in (\ref{mainbound}) would instead exceed the main term $T\log T$, and we would not able to deduce  our nonvanishing result.

 We conclude the introduction with a discussion of whether there is a reciprocity formula for  the untwisted spectral average $\mathcal{M}^+(h)$ in analogy to \eqref{reci}.   Were we to apply the heuristic analysis of \eqref{negative1}--\eqref{negative3} without  $\epsilon_{\pi}$, the ``same-sign'' Kuznetsov formula 
 produces roughly
$$T \sum_{n \ll T^4} \frac{\lambda_{\Pi}(n)}{n^{1/2}}e(\pm 2 \sqrt{n}), \quad \text{where}  \quad  e(x) :=  e^{2\pi i x}, $$
instead of (\ref{negative2}),
since   the $c$-sum is now essentially bounded.  However, this comes at the cost of the  archimedean phase factor $e(\pm 2 \sqrt{n})$ from the Bessel function.    At this point the Voronoi summation formula applied to the $n$-sum is completely self-dual, and so the triad Kuznetsov-Voronoi-Kuznetsov turns out to be essentially  involutory  and gives no useful information. This is the ``deadlock'' situation mentioned earlier.  The extra oscillation introduced by the parity breaks the self-duality here,  in such a strong sense that one can obtain pure bounds that are precise even on a $\log$-scale, as a comparison of \eqref{mainbound} and \eqref{lower} demonstrates.

As a reflection of this phenomenon,
formally imitating  the proof of  Theorem \ref{thm3} -- but without the parity --  yields serious convergence problems:~even the final spectral formula does not converge. This was already observed by Motohashi in the case of Eisenstein series \cite[(2.16)]{Mo3}. The problem can be repaired   by  restricting to test functions $h$ with spectral mass 0, or more precisely by  requiring that
\begin{equation*}
\int_{-\infty}^{\infty} h(t) t^n \tanh(\pi t) dt = 0
\end{equation*}
for all integers $n$ up to some sufficiently large bound. 
Alternatively, one can   consider the difference $\mathcal{M}^+(h) - \mathcal{M}^{\text{hol}}(h)$, for which the convergence problems disappear and one can derive a spectral identity.  These devices are reminiscent of \cite[Section 3]{Mo3}. As we are not aware   of any   interesting applications of such formulae,   we shall not go into further detail here. \\

{\bf Acknowledgements:} The authors wish to thank Michael Harris, Peter Sarnak,  Wilfried Schmid, Matthew Young, and Fan Zhou  for their helpful comments and suggestions.

 \section{Automorphic toolbox}\label{sec2}

For the rest of the paper $\pi$ will denote a cuspidal  automorphic representation on ${\rm GL}(2, \Bbb{Q})\backslash {\rm GL}(2, \Bbb{A})$ associated to a    Hecke-Maa{\ss} cusp form for ${\rm SL}(2, \Bbb{Z})$.  Let $t_\pi\geq 9.7$  denote the spectral parameter of $\pi$ (see  \cite[Appendix C]{He}) and  let $\lambda_{\pi}(n)$ denote its Fourier coefficients, normalized so that $\lambda_\pi(1)=1$.  The parity is $\epsilon_{\pi}=+1$ for even Maa{\ss} forms and $-1$ for odd Maa{\ss} forms, so that $\lambda_{\pi}(-n) = \epsilon_{\pi} \lambda_{\pi}(n)$.
  The Fourier coefficients of the non-holomorphic Eisenstein series $E_{it}$, $t \in \Bbb{R}$, are given explicitly as divisor sums
 $$\lambda_t(n)  :=   \sum_{0<d\mid n} d^{it} (n/d)^{-it}$$
for $n\not= 0$.  Let
 \begin{equation}\label{dspect}
  d_{\text{spec}}t   :=   \frac{1}{2\pi^2} t \tanh(\pi t)  dt
 \end{equation}
denote the Plancherel measure for  ${\rm SL}(2,\Bbb{R})$.

 As in the introduction, let   $\Pi$ be a cuspidal automorphic representation on ${\rm GL}(4,\Bbb{Q})\backslash {\rm GL}(4,\Bbb{A})$ which is unramified at all finite places.  Its abelian coefficients $a_\Pi(n_1,n_2,n_3)$ are the Hecke eigenvalues of $\Pi$ when each $n_i>0$, and by definition satisfy $a_\Pi(\sigma_1 n_1,\sigma_2 n_2,\sigma_3 n_3)=a_\Pi(n_1,n_2,n_3)$ for any choice of $\sigma_i=\pm 1$.  They are related to those of $\widetilde\Pi$ by
 $$a_{\widetilde{\Pi}}(n_1, n_2, n_3)   =   a_{\Pi}(n_3, n_2, n_1)   =  \overline{a_{\Pi}(n_1, n_2, n_3)} =   a_{\Pi}(-n_3, n_2, n_1).$$  The following bound of   Serre  (reprinted in \cite[appendix]{BB})  holds uniformly for all finite places:~there exists $\delta > 0$ such that
  \begin{equation}\label{delta1}
 \lambda_{\Pi}(n)   := a_{\Pi}(n, 1, 1)   \ll   n^{1/2 - \delta}.
 \end{equation}
 We will frequently use the Rankin-Selberg bound
 \begin{equation}\label{RSbound}
  \sum_{nm^2 \leq x} |a_{\Pi}(n, m, 1)|^2\ll x,
 \end{equation}
typically in combination with the Cauchy-Schwarz inequality; it is a consequence of the fact that the degree-16 tensor product $L$-function $L(s, \Pi \times \widetilde{\Pi})$ has a simple pole at $s=1$ and analytic continuation with moderate growth in vertical strips.  It is also known \cite[Proposition 6.2]{Ki} that the  Euler product for the  degree-10 $L$-function $L(s, \Pi, \text{Sym}^2)$ is absolutely convergent for $\Re s > 1$.   With this background in hand, we now turn to the following lemma (whose   proof is, in absence of the Ramanujan conjecture, not completely trivial).

\begin{lemma}\label{lem0} For $x \geq 1$ we have
$$\sum_{n \leq x} |\lambda_{\Pi}(n)|^2   \gg    x,$$
where the implied constant depends on $\Pi$.
\end{lemma}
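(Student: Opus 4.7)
My plan is to derive the lower bound from the analytic properties of the Rankin--Selberg $L$-function $L(s, \Pi \times \widetilde{\Pi})$, which by Jacquet--Shalika has a simple pole at $s = 1$ of positive residue, a Dirichlet series with nonnegative coefficients, and is nonvanishing on the line $\Re s = 1$.

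I would first expand the local Euler factor $L_p(s, \Pi \times \widetilde{\Pi}) = \prod_{i,j=1}^{4}(1 - \alpha_i(p)\overline{\alpha_j(p)}p^{-s})^{-1}$ and read off its coefficient of $p^{-s}$ as $|\sum_i \alpha_i(p)|^2 = |\lambda_\Pi(p)|^2$. Consequently, the prime contribution to the logarithmic derivative $-L'/L(s, \Pi \times \widetilde{\Pi})$ is $|\lambda_\Pi(p)|^2 \log p$, and Serre's bound (\ref{delta1}) renders the prime-power contributions absolutely summable in some half-plane $\Re s > 1 - \eta$. A standard Wiener--Ikehara Tauberian argument then yields
\[
\sum_{p \leq x} |\lambda_\Pi(p)|^2 \log p \;\sim\; x \qquad (x \to \infty),
\]
and in particular
\[
\sum_{n \leq x} |\lambda_\Pi(n)|^2 \;\geq\; \sum_{p \leq x} |\lambda_\Pi(p)|^2 \;\gg\; x/\log x .
\]

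The hard step is to upgrade this prime-only bound from $x/\log x$ to the desired $\gg x$. Since $f(n) := |\lambda_\Pi(n)|^2$ is a nonnegative multiplicative function, I would apply a Selberg--Delange-type theorem, which amounts to writing
\[
\sum_{n \geq 1} f(n) n^{-s} \;=\; L(s, \Pi \times \widetilde{\Pi}) \cdot H(s),
\]
with $H(s) = \prod_p P_p(s)$ holomorphic and nonvanishing in a neighborhood of $s = 1$. A direct Schur-function computation at each prime shows that the linear $p^{-s}$ coefficient of $P_p(s)$ cancels and that $P_p(s) = 1 + O(p^{-2\Re s + 2 - 4\delta})$ by Serre's bound, which only guarantees convergence of $H(s)$ in $\Re s > 3/2 - 2\delta$. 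The main obstacle---and what makes the lemma ``not completely trivial'' without the Ramanujan conjecture---is extending the convergence of $H(s)$ to include $\Re s = 1$. I would address this by incorporating the analytic continuations of the exterior and symmetric square $L$-functions of $\Pi$ (due to Kim--Shahidi) to reveal additional Euler-factor cancellations in $H$. Once $H(s)$ is holomorphic and nonzero near $s = 1$, a Perron-type contour integration applied to the simple pole of $L(s, \Pi \times \widetilde{\Pi})$ gives $\sum_{n \leq x} |\lambda_\Pi(n)|^2 \sim c_\Pi\, x$ with $c_\Pi > 0$.
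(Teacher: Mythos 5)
Your first step arrives at the right intermediate statement, but the justification is not correct: with the small $\delta$ of Serre's bound \eqref{delta1} one only gets $|a_{\Pi}(p^k)|^2 \ll p^{k(1-2\delta)}$, so the prime-power ($k\geq 2$) part of $-L'/L(s,\Pi\times\widetilde{\Pi})$ is absolutely convergent only for $\Re s > \tfrac32 - 2\delta$, \emph{not} in a half-plane containing $\Re s = 1$. This is exactly the point the paper has to work around: for each fixed $k\geq 2$ it bounds $\sum_{p\leq x}|a_{\Pi}(p^k)|^2$ by combining the absolute convergence of $L(s,\Pi\times\widetilde{\Pi})$ and $L(s,\Pi,\text{Sym}^2)$ in $\Re s>1$ with the pointwise bound \eqref{delta1} to lower the degree (estimate \eqref{k1}), uses \eqref{delta1} alone for $k\geq 1/\delta$ (estimate \eqref{k2}), and only then deduces from the prime number theorem for Rankin--Selberg $L$-functions \cite{LWY} that $\sum_{p\leq x}|\lambda_{\Pi}(p)|^2 \sim x/\log x$. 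So your conclusion for the prime sum is true, but not for the reason you give.

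The more serious gap is in the amplification step, which is precisely where the lemma is ``not completely trivial'' without Ramanujan. Writing $\sum_n |\lambda_{\Pi}(n)|^2 n^{-s} = L(s,\Pi\times\widetilde{\Pi})\,H(s)$, the whole problem is that $H(s)$ is not known to converge up to $\Re s = 1$; you acknowledge this, but the proposed remedy --- invoking Kim--Shahidi for $\wedge^2\Pi$ and $\text{Sym}^2\Pi$ to ``reveal additional Euler-factor cancellations'' --- is a hope rather than an argument. The cancellation of the linear coefficient of $H_p$ is automatic; the obstruction lies in the quadratic and higher coefficients, which contain quantities such as $|\lambda_{\Pi}(p^2)|^2 = |\lambda_{\text{Sym}^2\Pi}(p)|^2$, and controlling these on average is not a consequence of the holomorphy or absolute convergence of $L(s,\Pi,\wedge^2)$ and $L(s,\Pi,\text{Sym}^2)$ in $\Re s>1$ (one would want something like a Rankin--Selberg square of $\text{Sym}^2\Pi$, which is not available since $\text{Sym}^2$ of ${\rm GL}(4)$ is not known to be automorphic); at best one must redo hybrid pointwise/average estimates of the type \eqref{k1}--\eqref{k3} at every order, and your route moreover aims at the asymptotic $\sim c_{\Pi}x$, which is stronger than what is needed. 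The paper takes a genuinely different and much lighter second step that sidesteps prime powers entirely: from the prime sum it extracts only the Mertens-type estimate $\sum_{p\leq x}|\lambda_{\Pi}(p)|^2/p = \log\log x + O(1)$ and then applies Wirsing's theorem \cite{Wi} to the multiplicative function equal to $|\lambda_{\Pi}(n)|^2$ on squarefree $n$ and $0$ elsewhere, so that only the values at primes matter and the lower bound $\gg x$ follows at once. As it stands, your proposal leaves the central difficulty unresolved.
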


\begin{proof} 
 We start with a variation of \cite[Proposition 2.4]{RSa} for $m=4$. For a prime $p$ let $\{\alpha_j(p)\mid j = 1, \ldots, 4\}$, denote the Satake parameters of $\Pi$ at $p$, so that $$\lambda_{\Pi}(p)  =  \sum_{j=1}^4 \alpha_j(p), \quad \lambda_{\Pi}(p^2)   =   \sum_{1 \leq i \leq j \leq 4} \alpha_i(p) \alpha_j(p),$$
 and $\lambda_\Pi(p^k)$ is a symmetric polynomial in the $\alpha_j(p)$ of degree $k$.
These parameters are normalized to satisfy  $\alpha_1(p)  \cdots  \alpha_4(p) = 1$  and  $\{\alpha_j(p)^{-1}\mid  j = 1, \ldots, 4\} = \{\overline{\alpha_j(p)}\mid j = 1, \ldots, 4\}$,.  Therefore there are three possibilities for their size: (a) the Ramanujan conjecture holds at $p$, i.e., $|\alpha_j(p)| = 1$ for $1 \leq j \leq 4$; (b) two of the parameters, say $\alpha_1(p)$, $\alpha_2(p)$,  are on the unit circle, while $\alpha_3(p)^{-1} = \overline{\alpha_4(p)}$ is not; or (c) $\{\alpha_1(p), \alpha_2(p), \alpha_3(p), \alpha_4(p)\} = \{\alpha, \bar{\alpha}^{-1}, \beta, \bar{\beta}^{-1}\}$ (as multisets) for complex numbers $\alpha, \beta$ not on the unit circle.
In each case it is easy to see that
$$\max_{j} |\alpha_j(p)|^2   \ll   1 + |\lambda_{\Pi}(p)|^2  +  | \lambda_{\Pi}(p^2)|.$$
Let $a_\Pi$ denote the multiplicative function defined on prime powers $p^k$ by
$a_{\Pi}(p^k) =   \alpha_1(p)^k + \cdots + \alpha_4(p)^k $
(which coincides with $\lambda_{\Pi}$ on primes); it thus satisfies the bound
$$|a_{\Pi}(p^k)|^2  \ll_k   1  +   |\lambda_{\Pi}(p)|^{2k}    +  | \lambda_{\Pi}(p^2)|^k$$
for $k \in \Bbb{N}$.

The quantity $|\lambda_\Pi(p)|^2$ is the Dirichlet series coefficient of $p^{-s}$ in the degree-16 Rankin-Selberg $L$-function $L(s, \Pi \times \widetilde{\Pi})$, while the quantity $\lambda_\Pi(p^2)$ is the  Dirichlet series coefficient of $p^{-s}$ in the degree-10 symmetric square $L$-function $L(s, \Pi, \text{Sym}^2)$.
As noted above, both $L$-functions are uniformly and absolutely convergent in the half plane $\Re{s}\geq 1+\varepsilon$, for any $\varepsilon>0$, and consequently
$\sum_{p\leq x}|\lambda_\Pi(p)|^2$ and  $\sum_{p\leq x}|\lambda_\Pi(p^2)|$ are both $O_\varepsilon(x^{1+\varepsilon})$.
It follows that
\begin{equation}\label{k1}
\begin{split}
\sum_{p \leq x} |a_{\Pi}(p^k)|^2 &  \ll_k   \sum_{p \leq x} \left(1 + |\lambda_{\Pi}(p)|^{2k} + |\lambda_{\Pi}(p^2)|^{k}\right)\\
&   \ll_{\varepsilon}   x + x^{1+\varepsilon} x^{(\frac{1}{2} - \delta)(2k-2)} + x^{1+\varepsilon} x^{(1 - 2\delta)(k-1)} \ll x^{k - 2\delta(k-1) + \varepsilon}
\end{split}
\end{equation}
for any $\varepsilon > 0$, where we have applied the bound
 \eqref{delta1}
 to $ |\lambda_{\Pi}(p)|^{2k-2}$ and $|\lambda_{\Pi}(p^2)|^{k-1}$.

The implied constant in  (\ref{k1}) depends on $k$, but for
 $k \geq 1/\delta$ the bound \eqref{delta1} trivially implies
\begin{equation}\label{k2}
\sum_{p \leq x} |a_{\Pi}(p^k)|^2  \ll    \sum_{p \leq x} p^{2k(\frac{1}{2} - \delta)}  \leq   x^{1 + k - 2k\delta}.
\end{equation}
Combining \eqref{k1} and \eqref{k2} we deduce
\begin{equation}\label{k3}
\sum_{p^k \leq x} |a_{\Pi}(p^k)|^2   \ll_{\varepsilon}  x^{1 - \delta+\varepsilon}
\leq
 x^{1-\delta/2}
, \quad \text{uniformly for }  k\ge2
\end{equation}
and $\varepsilon < \delta$.  By the ``prime number theorem'' for  Rankin-Selberg $L$-functions \cite[Lemma 5.1]{LWY} we have
\begin{equation*}
\sum_{n \leq x} \Lambda(n) |a_{\Pi}(n)|^2   \sim   x \qquad \text{as~~}x\rightarrow\infty,
\end{equation*}
where the von Mangoldt function $\Lambda(n)$ is supported on prime powers and $\Lambda(p^k)=\log(p)$.
By \eqref{k3} the contribution from
   higher prime powers is negligible in this sum, and we conclude
 \begin{equation}\label{PNT}
 \sum_{p \leq x}   |\lambda_{\Pi}(p)|^2   =   \sum_{p \leq x}   |a_{\Pi}(p)|^2    \sim   \frac{x}{\log x} \quad \text{as} \quad x\rightarrow\infty
 \end{equation}
  by partial summation.

 We now apply  Wirsing's Theorem \cite[Satz 1]{Wi} to the multiplicative function
  $$f(n)= \left\{
            \begin{array}{ll}
               |\lambda_{\Pi}(n)|^2, &  n  \hbox{~squarefree;} \\
              0, & \hbox{otherwise}
            \end{array}
          \right.
$$ to deduce for large $x$ that
\begin{displaymath}
\begin{split}
\sum_{n \leq x} f(n)   \sim  e^{-\gamma} \frac{x}{\log x} \prod_{p \leq x}& \left(1 +  \frac{f(p)}{p}\right) \geq
\\
& e^{-\gamma} \frac{x}{\log x} \exp\left(\sum_{p \leq x} \frac{|\lambda_{\Pi}(p)|^2}{p} -\frac{1}{2}\sum_{p \leq x} \frac{|\lambda_{\Pi}(p)|^4}{p^2} \right).
\end{split}
\end{displaymath}
Here we   used the inequality $\log (1+x) \geq x - x^2/2$ for $x \geq 0$.
By applying \eqref{PNT} and \eqref{delta1}, along with partial summation  and the absolute convergence of $L(s,\Pi\times\widetilde \Pi)$, we have
 \begin{equation*}
 \begin{split}
 \sum_{p \leq x} \frac{|\lambda_{\Pi}(p)|^2}{p}   -  \frac{1}{2}\sum_{p \leq x} \frac{|\lambda_{\Pi}(p)|^4}{p^2} & = \log\log x + O(1) + O\Bigl(\sum_{p \leq x} \frac{|\lambda_{\Pi}(p)|^2}{ p^{1+2\delta}} \Bigr) \\ & = \log\log x + O(1),
 \end{split}
 \end{equation*}
so that
$$\sum_{n\leq x}    |\lambda_\Pi(n)|^2   \geq   \sum_{n\leq x}f(n)   \gg   x$$
as asserted.
\end{proof}

  It follows from   the Dirichlet series expansions of tensor product $L$-functions in \cite[\S2]{BumpOslo}  that
\begin{equation}\label{diri1}
L(s + it, \Pi)L(s - it, \Pi)  =   \sum_{n, m\ge1} \frac{\lambda_{t}(n) a_{\Pi}(n, m, 1)}{n^sm^{2s}}
\end{equation}
and
\begin{equation}\label{diri2}
L(s, \Pi \times \pi)   =   \sum_{n, m\ge1} \frac{\lambda_{\pi}(n) a_{\Pi}(n, m, 1)}{n^sm^{2s}},
\end{equation}
 both of which   converge absolutely  for $\Re s > 1$ \cite[Theorem~5.3]{JS}.
In particular, all
 $L$-functions appearing in \eqref{defM} have Dirichlet series expansions which are absolutely convergent in $\Re s > 1$.
The convexity bound for central $L$-values and lower bounds for $L$-functions at the edge of the critical strip \cite{HL}, \cite[Section 3.6]{Ti} imply the estimates
\begin{equation}\label{convex}
\begin{split}
\frac{L(1/2 + it, \Pi)L(1/2 - it, \Pi)}{|\zeta(1 + 2 it)|^2}   & \ll_\varepsilon  (1 + |t|)^{2+\varepsilon} \min(1, |t|^2)
\\  \text{and} \quad
\frac{L(1/2, \Pi \times \pi)}{L(1, \text{Sym}^2 \pi)} & \ll_\varepsilon
\left\{
  \begin{array}{ll}
    t_{\pi}^{2+\varepsilon} \\
    k_{\pi}^{2+\varepsilon},
  \end{array}
\right.
\end{split}
\end{equation}
 where the extra factor of $t^2$ for $t$ small comes from the pole of the Riemann $\zeta$-function and the two cases on the right hand side correspond to Maa\ss~forms and holomorphic modular forms, respectively.

Our application of Theorem~\ref{thm4} requires some information about the representation parameter $(\mu,\beta)\in \Bbb{C}^4\times (\Bbb{Z}/2\Bbb{Z})^4$   appearing in  (\ref{curlyGeps}).  For  $\eta\in \Bbb{Z}/2\Bbb{Z}$ let
\begin{equation}\label{G01def}
  G_\eta(s)  :=  (2\pi)^{-s}\Gamma(s){\mathcal E}_{\eta}(s)  =
  \left\{
    \begin{array}{ll}
      \displaystyle  \frac{\Gamma_{\Bbb{R}}(s)}{\Gamma_{\Bbb{R}}(1-s)}, & \eta\in 2\Bbb{Z}, \\[0.5cm]
      \displaystyle  i \frac{\Gamma_{\Bbb{R}}(s+1)}{\Gamma_{\Bbb{R}}(2-s)}, &  \eta\in 2\Bbb{Z}+1,
    \end{array}
  \right.
\end{equation}
 where $\Gamma_{\Bbb{R}}(s)=\pi^{-s/2}\Gamma(s/2)$.  Thus ${\mathcal G}_\epsilon(s)=\prod_{j\leq 4}G_{\epsilon+\beta_j}(s+\mu_j)$ in (\ref{curlyGeps}).  Representation parameters for all cusp forms on ${\rm GL}(n,\Bbb{R})$ are explicitly described in \cite[(A.1)-(A.2)]{mirabolic}.  In particular, each $\mu_j$ either has the form $s'$ or occurs in a pair $(\mu_j,\mu_{j'})=(s'-\frac{k-1}{2},s'+\frac{k-1}{2})$  for some   $s'\in\Bbb{C}$ with $|\Re{s'}|<\frac 12$ and $k\in \Bbb{Z}_{\geq 2}$ with $k\equiv \beta_j+\beta_{j'}~ (\mod 2)$.  Although in the latter situation
  $G_{\epsilon+\beta_{j}}(s+\mu_{j})$ has simple poles at $s=\frac{k-1}{2}-s'-n$, $n\in \Bbb{Z}_{\geq 0}$, the product
\begin{equation}\label{Gproductscancel}
  G_{\epsilon+\beta_j}(s+\mu_j) G_{\epsilon+\beta_{j'}}(s+\mu_{j'})=
  i^k(2\pi)^{1-2s-2s'}\frac{\Gamma(s+\frac{k-1}{2}+s')}{\Gamma(1-s+\frac{k-1}{2}-s')}
\end{equation}
is holomorphic in $\{s\mid \Re{s}>0\}$.  Thus
\begin{equation}\label{deltarevised2}
  {\mathcal G}_0(s), \ \ {\mathcal G}_1(s), \ \ \text{and} \ \ {\mathcal G}^\pm(s) \quad \text{are all holomorphic in} \ \Re{s}>\frac 12-\delta
\end{equation}
for some sufficiently small $0<\delta<1/2$ depending on $\Pi$ (which we assume, as we may, is simultaneously valid  in (\ref{delta1})).
Stirling's formula applied to (\ref{G01def}) gives the asymptotics   $|G_\eta(s)|\sim |\frac{s}{2\pi}|^{\Re{s}-1/2}$ in vertical strips of finite width,  hence  using (\ref{sumzero}) we bound
\begin{equation}\label{Gstirling}
   {\mathcal G}_\epsilon(s)   \ll  (1+ |s|)^{4\Re{s}-2},
\end{equation}
uniformly for $s$ away from poles in any vertical strip of finite width.

We conclude this section by stating the Kuznetsov summation formula \cite{Ku} in the two different versions  we will apply it.  Both involve the integral kernels
\begin{equation}\label{ker}
\aligned
\mathcal{J}^{+}_t(x)  &  :=  \frac{\pi i}{ \sinh(\pi t)} (J_{2it}(x) - J_{-2it}(x)), \\
\mathcal{J}^{-}_t(x)  &  :=  4 \cosh(\pi t) K_{2it}(x)   =   \frac{\pi i}{ \sinh(\pi t)} (I_{2it}(x) - I_{-2it}(x)),
\endaligned
\end{equation}
as well as the  ``holomorphic'' kernel
\begin{equation}\label{ker1}
\mathcal{J}^{\text{hol}}_k(x)   :=   \mathcal{J}^+_{(k-1)/(2i)}(x)   =  2\pi i^k J_{k-1}(x), \quad k \in 2\Bbb{N}.
\end{equation}
Let $h(t)=O((1+|t|)^{-3})$ be an even function which is holomorphic in $|\Im t| \leq 1/2$, and let $n, m \geq 1$.  Then (see, e.g., \cite[Lemma 3.3]{BK})
\begin{multline}\label{kuz1}
\sum_{\pi} \epsilon_{\pi} \frac{\lambda_{\pi}(n) \lambda_{\pi}(m)}{L(1, \text{Sym}^2 \pi)}h(t_{\pi})   +   \frac{1}{2\pi}  \int_{-\infty}^{\infty} \frac{\lambda_{t}(n) \lambda_{t}(m)}{|\zeta(1 + 2 it)|^2} h(t) dt\\
 =     \sum_{c}\frac{S(- n, m, c) }{c} \int_{-\infty}^{\infty} \mathcal{J}^{-}_{t}\left(\frac{4\pi \sqrt{nm}}{c}\right) h(t)   d_{\text{spec}}t.
\end{multline}
Here $S(n, m, c)$ is the usual Kloosterman sum, which satisfies the Weil bound $|S(n, m, c)|    \leq  c^{1/2} (n,  m, c)^{1/2} \tau(c)$, where $\tau(c)$ is the number of positive divisors of $c$.

  Finally, formula (\ref{kuz1}) can be inverted as follows.  Suppose that $\phi \in C^3((0, \infty))$ satisfies $x^j \phi^{(j)}(x) \ll \min(x, x^{-3/2})$ for $0 \leq j \leq 3$, and let $n, m \in \Bbb{N}$. Then \cite[Theorems 2.3 and 2.5]{Mo} states that
\begin{equation}\label{kuz2}
\begin{split}
\sum_{c} \frac{S(\pm n, m, c)}{c}\phi&\left(\frac{4\pi   \sqrt{nm}}{c}\right)    =
\\ &   \sum_{\pi}  \frac{\lambda_{\pi}(\pm n) \lambda_{\pi}(m)}{L(1, \text{Sym}^2 \pi)} \int_0^{\infty} \mathcal{J}_{t_{\pi}}^{\pm}(x) \phi(x) \frac{dx}{x} \\
&  +  \frac{1}{2\pi} \int_{-\infty}^{\infty} \frac{\lambda_{t}(n) \lambda_{t}(m)}{|\zeta(1 + 2 it)|^2}\int_0^{\infty} \mathcal{J}_t^{\pm}(x) \phi(x) \frac{dx}{x}  dt\\
&  +   \left.\sum_{\pi}\right.^{\text{hol}} \frac{\lambda_{\pi}(\pm n) \lambda_{\pi}(m)}{L(1, \text{Sym}^2 \pi)}  \int_0^{\infty} \mathcal{J}^{\text{hol}}_{k_{\pi}} (x) \phi(x) \frac{dx}{x},
\end{split}
\end{equation}
where the first $\pi$-sum runs over  automorphic representations associated to cuspidal Maa{\ss} forms  for  ${\rm SL}(2,\Bbb{Z})$ having spectral parameter $t_{\pi}$, and the last $\pi$-sum runs over  automorphic representations associated to classical holomorphic cusp forms  for  ${\rm SL}(2,\Bbb{Z})$ having weight $k_{\pi} \in 2\Bbb{N}$ (with the convention $\lambda_{\pi}(-n) = 0$, so that it disappears in the minus sign case).

 \section{Balanced Voronoi summation and proof of Theorem~\ref{thm4}}\label{sec3}

 In this section we prove Theorem~\ref{thm4} using the key relations \cite[Prop.~3.6]{MS} between  tempered distributions $\sigma_{j,N,(k_1,k_2,k_3)}$ and  $\rho_{j,N,(k_1,k_2,k_3)}$ on $\Bbb{RP}^1$ defined in \cite[(2.47)]{MS}, where $1\leq j \leq 3$, $N>0$, and all subscripts are integers.
  Theorem~\ref{thm4} will be shown as a consequence of the following ``balanced'' Voronoi formula, which itself follows from the ${\rm GL}(4,\Bbb{Z})\backslash {\rm GL}(4,\Bbb{R})$ Voronoi formula in \cite{MS}.

 \begin{theorem}\label{thm:balanced}
 For any cuspidal automorphic form $\Pi$ on the quotient ${\rm GL}(4,\Bbb{Z})\backslash {\rm GL}(4,\Bbb{R})$, define
 \begin{equation}\label{curlyLdef}
   {\mathcal L}_{N,\epsilon}(s,\Pi)  :=   \sum_{m\mid N}\sum_{n\neq 0}a_\Pi(n,m,1)S\Big(n,1,\frac Nm\Big)m^{1-2s}\operatorname{sgn}(n)^\epsilon|n|^{-s},
 \end{equation}
 where $N$ is a positive integer,  $\epsilon\in\Bbb{Z}/2\Bbb{Z}$, and $\Re{s}>1$ (where the sum converges absolutely because of (\ref{RSbound})).   Then ${\mathcal L}_{N,\epsilon}(s,\Pi)$ has an analytic continuation to an entire function in $s$ and satisfies the functional equation
 \begin{equation}\label{curlyLFE}
   {\mathcal L}_{N,\epsilon}(s,\Pi) =   N^{2-4s}{\mathcal G}_\epsilon(1-s) {\mathcal L}_{N,\epsilon}(1-s,\widetilde\Pi),
 \end{equation}
 where ${\mathcal G}_\epsilon(\cdot)$ is defined in (\ref{curlyGeps}).
 \end{theorem}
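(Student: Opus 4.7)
The plan is to interpret the defining sum (\ref{curlyLdef}) as a Mellin pairing of one of the tempered automorphic distributions of \cite[(2.47)]{MS} against the signed power $\operatorname{sgn}(x)^\epsilon|x|^{-s}$, and then apply the distributional Voronoi relation of \cite[Prop.~3.6]{MS}. The gamma factor ${\mathcal G}_\epsilon(1-s)$ will emerge as the Mellin transform of the signed power against the kernels $G_{\epsilon+\beta_j}(\cdot+\mu_j)$ from (\ref{G01def}), while the factor $N^{2-4s}$ will track the reflection $x\mapsto N^2/x$ induced by the long Weyl element on the level.

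I would begin in $\Re{s}>1$, where Weil's bound $|S(n,1,N/m)|\ll(N/m)^{1/2}\tau(N/m)$ combined with (\ref{RSbound}) and Cauchy--Schwarz yields absolute convergence of (\ref{curlyLdef}). Opening the Kloosterman sum
$$S(n,1,N/m)\;=\;\sum_{\substack{a\bmod N/m\\(a,N/m)=1}}e\!\left(\frac{an+\bar a}{N/m}\right),$$
the expression $\sum_{m\mid N}m^{1-2s}\sum_{n\neq 0}a_\Pi(n,m,1)S(n,1,N/m)\operatorname{sgn}(n)^\epsilon|n|^{-s}$ rearranges into a linear combination, indexed by $m\mid N$ and $a\bmod N/m$, of Mellin transforms of the additively twisted series $\sum_n a_\Pi(n,m,1)e(an/(N/m))$. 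Each of these twisted series is, by construction, an evaluation of the $(2,1,1)$-type distributions $\sigma_{3,N,(k_1,k_2,k_3)}$ of \cite[(2.47)]{MS} at the test function $\operatorname{sgn}(x)^\epsilon|x|^{-s}$, for the parameter vector matching the pair $(m,N/m)$.

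Next I would invoke \cite[Prop.~3.6]{MS}, which relates $\sigma_{3,N,(k_1,k_2,k_3)}$ to $\rho_{3,N,(k_1,k_2,k_3)}$ via an explicit Fourier-type transform whose Mellin kernel is exactly $\prod_j G_{\epsilon+\beta_j}(\cdot+\mu_j)={\mathcal G}_\epsilon(\cdot)$. Pairing the $\rho$-side against $\operatorname{sgn}(x)^\epsilon|x|^{-s}$ and Mellin-dualizing produces the factor ${\mathcal G}_\epsilon(1-s)$; the associated reflection on the underlying $\Bbb{RP}^1$ then rescales the level and yields the $N^{2-4s}$ prefactor. Repackaging the resulting dual sum, whose coefficients are $a_{\widetilde\Pi}(n,m,1)=\overline{a_\Pi(n,m,1)}$ and whose modulus structure remains $S(n,1,N/m)$, matches it term-by-term with ${\mathcal L}_{N,\epsilon}(1-s,\widetilde\Pi)$, proving (\ref{curlyLFE}) in $\Re{s}>1$. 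Entirety of ${\mathcal L}_{N,\epsilon}(s,\Pi)$ then follows by analytic continuation: the right-hand side converges absolutely for $\Re{s}<0$, and the two half-plane descriptions glue together via the functional equation, with any apparent poles of ${\mathcal G}_\epsilon(1-s)$ cancelled by the tempered character of the MS distributions.

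The main obstacle is the bookkeeping required to verify that the formula is truly \emph{balanced}: that is, that the exact same divisor pattern $\sum_{m\mid N}m^{1-2s}$ and the same short Kloosterman modulus $N/m$ reappear on the dual side, instead of hyper-Kloosterman sums or extra $\tau(N)$-type factors of the kind that arise in the standard ${\rm GL}(4)$ Voronoi formula of \cite{MS}. This clean matching is a ${\rm GL}(4)$-specific feature of the Hecke multiplicativity of the coefficients $a_\Pi(n,m,1)$: the coupling of the middle index $m$ with the short Kloosterman sum $S(n,1,N/m)$ is precisely the one preserved by the $\sigma\leftrightarrow\rho$ duality, and verifying this compatibility combinatorially -- without inadvertently inserting additional divisor sums -- is where the real content of Theorem~\ref{thm:balanced} lies, and is what ultimately enables the application to Theorem~\ref{thm4} with no loss of $N^\varepsilon$.
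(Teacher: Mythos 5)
Your proposal has a genuine gap at precisely the point where the theorem has content. After opening $S(n,1,N/m)=\sum_{a}e\big((an+\bar a)/(N/m)\big)$ and applying the standard additively twisted ${\rm GL}(4)$ Voronoi machinery of \cite{MS} to each twist $e(an/(N/m))$, the dual side does \emph{not} automatically have ``modulus structure $S(n,1,N/m)$'': the usual formula dualizes an additive twist into sums of $a_{\widetilde\Pi}$-coefficients weighted by hyper-Kloosterman-type sums in $\bar a$ together with divisor sums over moduli, and one must then recombine these, over $a$ and over $m\mid N$, with the leftover factor $e(\bar a/(N/m))$ to reconstitute exactly $\sum_{m\mid N}m^{2s-1}S(n,1,N/m)$ with no extra divisor-type factors. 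You explicitly defer this step as ``bookkeeping'' and even call it ``where the real content of Theorem~\ref{thm:balanced} lies,'' so the decisive identity is asserted rather than proved; it is the nontrivial Hecke/character-sum computation that \cite{Zh,MZ} carry out in the Dirichlet-series approach, and the introduction of this paper warns that naively opening the Kloosterman sum and applying \cite{MS} is exactly what produces the additional divisor sums you need to rule out. Relatedly, your description of \cite[Prop.~3.6]{MS} is inaccurate: no single relation there has Mellin kernel ${\mathcal G}_\epsilon$; in the paper's argument the four gamma factors of \eqref{curlyGeps} emerge only after composing two ${\mathcal T}^*$-operators, one Fourier transform, and a further ${\mathcal T}^*$, and then comparing two evaluations of one signed Mellin transform.

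For contrast, the paper's route never opens the Kloosterman sum: it takes the single distribution $\sigma_{2,N,(1,0,1)}$, combines the third identity of \cite[Prop.~3.6]{MS} with \cite[Prop.~2.51]{MS} (which introduces the characters $e(\pm\ell/N)$), and writes the inner distributions as delta-combinations twisted by $e\big(\pm n\bar\ell/(N/(N,\ell))\big)$; summing over $\ell\ (\mathrm{mod}\ N)$ and grouping by $m=(N,\ell)$ assembles $S(n,1,N/m)$ on both sides simultaneously, so the balance is automatic, and the functional equation follows from computing the entire function $(M_\epsilon\sigma_{2,N,(1,0,1)})(s)$ in two ways. Entirety comes from the fact that signed Mellin transforms of distributions vanishing to infinite order at $0$ and $\infty$ are entire. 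Your entirety argument, by contrast, does not work as stated: the series \eqref{curlyLdef} converges for $\Re s>1$, the dual series for $\Re s<0$, and these half-planes neither overlap nor cover the strip $0\leq\Re s\leq 1$, so ``gluing via the functional equation'' provides no continuation there; you would need the entire continuation of each additively twisted coefficient sum (as supplied by the \cite{MS} distributional theory), or an equivalent input, to cover the critical strip.
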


 \noindent
 As we mentioned above, this theorem has been generalized to GL($n$) in \cite{Zh,MZ}, where (\ref{curlyLFE}) is derived using Dirichlet series methods.  We will include a different argument here for $n=4$, based on the machinery of \cite{MS} used to prove the usual GL(4) Voronoi formula.  Roughly speaking, the argument of \cite{MS} uses a chain of distributions on $\Bbb{R}$ -- one for each simple root of GL($n$) -- that are related using simple Weyl reflections.  The  GL($n$) Voronoi formula there is an identity between the distributions for the first and last simple roots.  In contrast, the argument here starts in the middle of the chain of distributions, and then separately links them to the  distributions for the first and last roots.

  Before giving its proof, we briefly see how Theorem~\ref{thm:balanced} implies Theorem~\ref{thm4}.
The sum (\ref{curlyLdef}) satisfies the bounds
 \begin{equation*}
    {\mathcal L}_{N,\epsilon}(s,\Pi)   \ll_\varepsilon
    \left\{
      \begin{array}{ll}
        N^{1/2+\varepsilon}, & \Re s  > 1,\\
        N^{1/2+2-2\Re{s}+\varepsilon}, & 0\leq \Re s \leq 1, \\
         N^{1/2+2-4\Re{s}+\varepsilon}, & \Re s<0,
      \end{array}
    \right.
 \end{equation*}
where the first estimate follows from (\ref{RSbound}) and Weil's bound for Kloosterman sums, the third from the functional equation (\ref{curlyLFE}), and the second from convexity (it is easy to see ${\mathcal L}_{N,\epsilon}(s,\Pi)$ has finite order).
It follows that $${\mathcal V}_\epsilon(s,z)  =  \sum_{N>0}N^{2s-1}  {\mathcal L}_{N,\epsilon}(s+z,\Pi)$$ is entire in $\{\Re(s+z)>0,\Re{z}>5/4,\Re{s}<-1/4\}\cup \{\Re(s+z)\leq 0,\Re(s+2z)>5/4\}=\{\Re(s+2z)>5/4,\Re{z}>5/4,\Re{s}<-1/4\}$, and inherits the functional equation (\ref{voronoiFE}) from (\ref{curlyLFE}).

 \begin{proof}[Proof of Theorem~\ref{thm:balanced}]
 In what follows we mainly follow the notation conventions of \cite{MS}. The third statement in \cite[Prop.~3.6]{MS}  reads
 \begin{equation}\label{voronoiproof1}
  \sigma_{2,N,(1,0,1)}(x)   =    \chi_2(Nx)  \rho_{1,N,(1,0,1)}\Big(\frac{1}{N^2 x}\Big),
\end{equation}
 where   $\chi_j: \Bbb{R}^* \longrightarrow  \Bbb{C}^*$ denotes the homomorphism
\begin{equation*}
\label{mudef} \chi_j(x) =
|x|^{\mu_j-\mu_{j+1}-1}\operatorname{sgn}(x)^{\beta_j+\beta_{j+1}}.
\end{equation*}
By \cite[Prop.~2.51]{MS} we have
\begin{equation}\label{voronoiproof2}
\aligned
    \rho_{1,N,(1,0,1)}(y) & =   \sum_{\ell \, (\text{mod }  N)}e\Big(\frac{\ell}{N}\Big)({\mathcal F}\sigma_{1,N,(0,\ell,1)})(Ny)  \\
     \text{and} \qquad
    \sigma_{2,N,(1,0,1)}(r)   & =  \sum_{\ell \, (\text{mod } N)}e\Big(-\frac{\ell}{N}\Big)({\mathcal F}\rho_{2,N,(1,\ell,0)})(-Nr)
    ,
\endaligned
\end{equation}
where $(\mathcal F\tau)(x)=\int_{\Bbb{R}}\tau(r)e(-xr)dr$ denotes the Fourier transform of a tempered distribution $\tau$.

Define the normalized coefficients
\begin{equation}\label{normalizedcoeffs}
\begin{split}
  c_{r,d,1} & =  a_\Pi(r,d,1)d^{-\mu_1-\mu_2}\operatorname{sgn}(r)^{\beta_1}|r|^{-\mu_1} \quad \text{and} \\
  & \quad c_{1,d,r} =  a_\Pi(1,d,r)d^{-\mu_1-\mu_2}\operatorname{sgn}(r)^{\beta_4}|r|^{\mu_4}
\end{split}
\end{equation}
for integers $d>0$ and $r\neq 0$ (see \cite[(2.9)]{MS}), and the distributions
\begin{equation}\label{voronoiproof46}
\aligned
    \Delta_{L;(N,\ell),1,\frac{\bar{\ell}}{N/(N,\ell)}}  & =   \sum_{n\neq 0}c_{n,(N,\ell),1}e\Big(\frac{n\bar{\ell}}{N/(N,\ell)}\Big)\delta_n \\
    \text{and} \ \ \ \
        \Delta_{R;(N,\ell),1,\frac{\bar{\ell}}{N/(N,\ell)}}   & =   \sum_{n\neq 0}c_{1,(N,\ell),n} e\Big(-\frac{n\bar{\ell}}{N/(N,\ell)}\Big) \delta_n,
\endaligned
\end{equation}
where $\delta_r\in C^{-\infty}(\Bbb{R})$ denotes the Dirac $\delta$-function supported at $r\in \Bbb{R}$.
In terms of the  operators
\begin{equation*}
\aligned
   \Big({\mathcal T}^*_{j,a,b}\sigma\Big)(x)   &  =   \operatorname{sgn}(a x)^{\beta_j+\beta_{j+1}}  | a x|^{\mu_j-\mu_{j+1}-1} (\mathcal F\sigma)\big(\frac{b}{ ax}\big)
     \\
     &=  \chi_j(ax)(\mathcal F\sigma)\big(\frac{b}{ax}\big)
\endaligned
\end{equation*}
defined for $j=1,2,3$,
we thus have
 \begin{equation}\label{voronoiproof3}
 \aligned
    \sigma_{2,N,(1,0,1)}   & =   \sum_{\ell \, (\text{mod } N)} e\Big(\frac{\ell}{N}\Big){\mathcal T}^*_{2,N,1}\sigma_{1,N,(0,\ell,1)}\\
    & =   \sum_{\ell \, (\text{mod } N)} e\Big(\frac{\ell}{N}\Big){\mathcal T}^*_{2,N,1}{\mathcal T}^*_{1,\frac{N}{(N,\ell)},\frac{(N,\ell)}{N}}\Delta_{L;(N,\ell),1,\frac{\bar{\ell}}{N/(N,\ell)}},
    \endaligned
 \end{equation}
where in the first step we have used (\ref{voronoiproof1}) and the first equation of (\ref{voronoiproof2}), and in the
second step we have invoked the first definition in (\ref{voronoiproof46}) and \cite[(4.10)]{MS}.
We also have that
\begin{equation}\label{voronoiproof5}
    \rho_{2,N,(1,\ell,0)}   =  {\mathcal T}^*_{3,\frac{N}{(N,\ell)},-\frac{(N,\ell)}{N}}\Delta_{R;(N,\ell),1,\frac{\bar{\ell}}{N/(N,\ell)}},
\end{equation}
as follows from the second formula in \cite[Prop. 3.6]{MS}.

Identities (\ref{voronoiproof2}), (\ref{voronoiproof46}),   (\ref{voronoiproof3}), and (\ref{voronoiproof5}) are all equalities of tempered distributions which   {\em vanish to infinite order} at $x=0$ and $x=\infty$ in the sense described in \cite{inforder} and \cite[Prop.~3.6]{MS}.
A distribution $\tau$ on $\Bbb{R}$ vanishing to infinite order both at zero and at infinity has an entire signed Mellin transform
\begin{equation*}
  (M_\epsilon\tau)(s)   =   \int_{\Bbb{R}}\tau(x)|x|^{s-1}\operatorname{sgn}(x)dx
\end{equation*}
(see \cite[Theorem~4.8]{inforder}).
 Thus for precisely the same analytic reasons as in \cite[(1.11)]{MS},  $(M_\epsilon\sigma_{2,N,(1,0,1)})(s)$ is entire.  (In fact, the distributional identities here are finite linear combinations of the ones there, so no additional analytic overhead is needed beyond what is provided there.)  Likewise, the signed Mellin transforms
\begin{equation*}
  (M_\epsilon  \Delta_{L;(N,\ell),1,\frac{\bar{\ell}}{N/(N,\ell)}})(s)  =
\sum_{n\neq 0}c_{n,(N,\ell),1}e\Big(\frac{n\bar{\ell}}{N/(N,\ell)}\Big) \operatorname{sgn}(n)^\epsilon
|n|^{s-1}
\end{equation*}
and
\begin{equation*}
  (M_\epsilon   \Delta_{R;(N,\ell),1,\frac{\bar{\ell}}{N/(N,\ell)}})(s) =  \sum_{n \neq 0}c_{1,(N,\ell),n} e\Big(-\frac{n\bar{\ell}}{N/(N,\ell)}\Big)\operatorname{sgn}(n)^\epsilon
|n|^{s-1},
\end{equation*}
both initially convergent for $\Re{s}$ sufficiently negative, analytically continue to entire functions of $s\in \Bbb{C}$.
For later reference, we compute that
\begin{equation*}
\aligned
  \sum_{\ell \, (\text{mod } N)} & e\Big(\frac{\ell}{N}\Big) (N,\ell)^{2s-1+\mu_1+\mu_2+2\mu_3}(M_{\epsilon+\beta_1+\beta_3}  \Delta_{L;(N,\ell),1,\frac{\bar{\ell}}{N/(N,\ell)}})(s+\mu_1-\mu_3) \\ & = \ \
\sum_{m|N}\sum_{n \neq 0}
  a_\Pi(n,m,1)S(n,1,N/m)m^{2s-1-2\mu_3}\operatorname{sgn}(n)^{\epsilon+\beta_3}|n|^{s-1-\mu_3} \\ &  = \ \
{\mathcal L}_{N,\epsilon+\beta_3}(1-s+\mu_3,\Pi)
\endaligned
\end{equation*}
and
\begin{equation}\label{MTRsum}
\begin{split}
  \sum_{\ell\, (\text{mod } N)} &\frac{e\Big(-\frac{\ell}{N}\Big)}{ (N,\ell)^{2s-1-\mu_1-\mu_2+2\mu_3}}(M_{\epsilon+\beta_3+\beta_4}  \Delta_{R;(N,\ell),1,\frac{\bar{\ell}}{N/(N,\ell)}})(1-s+\mu_3-\mu_4) \\
&=
{\mathcal L}_{N,\epsilon+\beta_3}(s-\mu_3,\widetilde\Pi),
\end{split}
\end{equation}
by inserting (\ref{normalizedcoeffs}) and grouping together terms with a common value $m$ of $(N,\ell)$.  In particular, this establishes the analytic continuation asserted in the Theorem.

When both a tempered distribution $\tau$ and its Fourier transform ${\mathcal F}{\tau}$ vanish to infinite order at both 0 at infinity,
\begin{equation}\label{MellvsTstar}
\begin{split}
  (M_\epsilon{\mathcal T}^*_{j,a,b} & \tau)(s)  = \\ &    \frac{ \operatorname{sgn}(-b)^{\epsilon+\beta_j+\beta_{j+1}}}{\operatorname{sgn}(a)^\epsilon}
  \frac{|b|^{s+\mu_j-\mu_{j+1}-1}}{|a|^{s}}\times \\
  &  G_{\epsilon+\beta_j+\beta_{j+1}}(1-s-\mu_j+\mu_{j+1})(M_{\epsilon +\beta_j+\beta_{j+1}}\tau)(s+\mu_j-\mu_{j+1}),
 \end{split}
\end{equation}
 where both Mellin transforms are entire and we have used the relationship
  \begin{equation}\label{MFvsM}
    (M_\epsilon \mathcal F \tau)(s)   =  (-1)^s  G_\epsilon(s)  (M_\epsilon\tau)(1-s)
  \end{equation}
  between the signed  Mellin transforms of a function and its Fourier transform
  (see \cite[Theorem~4.12 and Lemma~6.19]{inforder}).

 We now compute the entire function $(M_\epsilon\sigma_{2,N,(1,0,1)})(s)$ in two different ways.  Inserting (\ref{voronoiproof3}), applying (\ref{MellvsTstar}) twice, and moving the $\ell$-sum to the inside we obtain that $(M_\epsilon\sigma_{2,N,(1,0,1)})(s)$ equals
 \begin{equation}\label{signedMT1}
 \aligned
 &  \sum_{\ell \, (\text{mod } N)} e\Big(\frac{\ell}{N}\Big)
   \Big(M_\epsilon
  {\mathcal T}^*_{2,N,1}{\mathcal T}^*_{1,\frac{N}{(N,\ell)},\frac{(N,\ell)}{N}}\Delta_{L;(N,\ell),1,\frac{\bar{\ell}}{N/(N,\ell)}}
   \Big)(s) \\
 &   =
(-1)^{\epsilon+\beta_2+\beta_3}N^{-s} G_{\epsilon+\beta_2+\beta_3}(1-s-\mu_2+\mu_3)\\ & \qquad \times
(-1)^{\epsilon+\beta_1+\beta_3} G_{\epsilon+\beta_1+\beta_3}(1-s-\mu_1+\mu_3 ) \sum_{\ell\, (\text{mod }N)} e\Big(\frac{\ell}{N}\Big) \\ & \qquad \times  \Big(\frac{N}{(N,\ell)}\Big)^{1-2s-\mu_1-\mu_2+2\mu_3}
\Big( M_{\epsilon+\beta_1+\beta_3}\Delta_{L;(N,\ell),1,\frac{\bar{\ell}}{N/(N,\ell)}}  \Big)(s+\mu_1-\mu_3) \\
& =  (-1)^{\beta_1+\beta_2}N^{1-3s-\mu_1-\mu_2+2\mu_3} G_{\epsilon+\beta_2+\beta_3}(1-s-\mu_2+\mu_3)
\\
 & \qquad \times  G_{\epsilon+\beta_1+\beta_3}(1-s-\mu_1+\mu_3 ){\mathcal L}_{N,\epsilon+\beta_3}(1-s+\mu_3,\Pi).
 \endaligned
 \end{equation}
 On the other hand, applying  (\ref{MFvsM}) to the second identity in (\ref{voronoiproof2})   yields that $(M_\epsilon\sigma_{2,N,(1,0,1)})(s)$ equals
 \begin{equation}\label{signedMT3}
 \aligned
   &   G_\epsilon(s) N^{-s}\sum_{\ell\, (\text{mod }N)}\!\!e\Big(\!\!-\!\frac{\ell}{N}\Big)(M_\epsilon\rho_{2,N,(1,\ell,0)})(1-s) \\ &
   \qquad   =
      G_\epsilon(s) N^{-s}\sum_{\ell \, (\text{mod } N)}\!\!e\Big(\!\!-\!\frac{\ell}{N}\Big)\Big(M_\epsilon
      {\mathcal T}^*_{3,\frac{N}{(N,\ell)},-\frac{(N,\ell)}{N}}  \Delta_{R;(N,\ell),1,\frac{\bar{\ell}}{N/(N,\ell)}}
      \Big)(1-s)\\
   &\qquad   =
         G_\epsilon(s) N^{s-1-\mu_3+\mu_4}
        G_{\epsilon+\beta_3+\beta_4}(s-\mu_3+\mu_4)
      {\mathcal L}_{N,\epsilon+\beta_3}(s-\mu_3,\widetilde\Pi)
\endaligned
 \end{equation}
 using (\ref{voronoiproof5}) and (\ref{MTRsum}).
The functional equation (\ref{curlyLFE}) now follows from comparing (\ref{signedMT1}) to (\ref{signedMT3}).
 \end{proof}

 \section{Spectral reciprocity and proof of Theorem~\ref{thm3}}\label{sec4}

 In this section we prove Theorem \ref{thm3}.  For  $s$ with $\Re s \geq 0$ and $h$ as in Theorem \ref{thm3},   define
 \begin{equation} \label{Mshdef}
\begin{split}
\mathcal{M}^{\pm}(s; h)    :=   &\sum_{\pi} \epsilon_{\pi}^{(1\mp 1)/2} \frac{L(\textstyle{\frac 12}+s,   \Pi \times \pi ) }{L(1, \text{{\rm Sym}}^2 \pi)}  h(t_{\pi})\\
 & +   \frac{1}{2\pi} \int_{-\infty}^{\infty}  \frac{L(\textstyle{\frac 12} +s + it, \Pi)L(\textstyle{\frac 12}+s - it, \Pi)}{|\zeta(1 + 2 it)|^2}  h(t)  dt
\end{split}
\end{equation}
and
\begin{equation}\label{Mshdefhol}
   \mathcal{M}^{\text{hol}}(s; h)    :=    \left.\sum_{\pi}\right.^{\text{hol}}   \frac{L(\textstyle{\frac 12}+s,   \Pi \times \pi ) }{L(1, \text{{\rm Sym}}^2 \pi)} h(k_{\pi}).
\end{equation}
As before, we write $\widetilde{\mathcal{M}}^{\diamondsuit}(s; h)$ for $\diamondsuit \in \{+, -, \text{hol}\}$ for the same expressions with $\Pi$ replaced by $\widetilde{\Pi}$.  All of these sums are absolutely convergent and  holomorphic in  a half  plane  containing $\{s\mid \Re s \geq 0\}$; their values at $s=0$ specialize to the sums $\mathcal{M}^\diamondsuit(h)$ defined in (\ref{defM}) and (\ref{defMhol}). If $\Re s > 1/2$,  the $L$-functions in question can be replaced by their absolutely convergent Dirichlet series  \eqref{diri1} and \eqref{diri2}.  With further manipulations in mind, let us temporarily assume  $3/4<\Re s < 1$.

By design, applying the Kuznetsov formula (\ref{kuz1}) yields
\begin{equation}\label{afterkuz1}
\mathcal{M}^{-}(s; h)    =   \sum_{n, m, c > 0} \frac{a_{\Pi}(n, m, 1)}{(nm^2)^{1/2 + s}c}   S(- n, 1, c) H\left(\frac{4\pi \sqrt{n}}{c}\right),
\end{equation}
where
\begin{equation}\label{H}
H(x)   :=   \int_{-\infty}^{\infty} \mathcal{J}^{-}_t( x)  h(t)  d_\text{spec}t  =    \frac{2}{\pi^2}\int_{-\infty}^{\infty} K_{2it}(x) \sinh(\pi t) h(t) t \, dt.
\end{equation}
\begin{lemma}\label{lem1}
Let $C_2\in \Bbb{N}$.  There exists $C_1  > 0$ (depending on $C_2$) such that
\begin{equation}\label{boundH}
x^j \frac{d^j}{dx^j} H(x)    \ll_{C_2,h}   \min(x^{-C_2}, x^{C_2})
\end{equation}
for $0 \leq j \leq C_2$, for any $h:\{t\in \Bbb{C}: |\Im t|<C_1\}\rightarrow \Bbb{C}$  satisfying \eqref{c1}. In particular the Mellin transform $\widehat{H}(u)=\int_0^\infty H(x)x^{s-1}dx$ is holomorphic in $-C_2 < \Re u < C_2$ and is bounded by $\ll (1 + |u|)^{-C_2}$ in this region. Specifically, one can take $C_1 = 2C_2+2$.
 \end{lemma}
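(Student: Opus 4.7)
The plan is to establish the pointwise bounds on $x^j H^{(j)}(x)$ directly, via contour shifts in the $t$-integral of the ascending series expansion of $K_{2it}(x)$, and then derive the Mellin statement by integration by parts.

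First I would expand
\begin{equation*}
K_{2it}(x)=\frac{\pi}{2\sin(2\pi it)}\sum_{k\ge 0}\frac{(x/2)^{2k}}{k!}\bigl[(x/2)^{-2it}/\Gamma(k+1-2it)-(x/2)^{2it}/\Gamma(k+1+2it)\bigr]
\end{equation*}
and substitute into $H$. Using the simplification $\sinh(\pi t)/\sin(2\pi it)=-i/(2\cosh(\pi t))$, each term becomes a constant multiple of $(x/2)^{2k}$ times
\begin{equation*}
\int_{-\infty}^{\infty}\frac{h(t)\,t\,(x/2)^{\mp 2it}}{\cosh(\pi t)\,\Gamma(k+1\mp 2it)}\,dt.
\end{equation*}
The hypothesized zeros of $h$ at $\pm i(n-\frac 12)$ for $n\le C_1$ cancel exactly the poles of $1/\cosh(\pi t)$ in the strip $|\Im t|<C_1$, so each integrand is holomorphic there. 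Shifting the $t$-contour up by $B<C_1$ (resp.\ down by $B$) in the first (resp.\ second) integral extracts a factor $(x/2)^{2B}$, and convergence on the shifted contour follows from Stirling's estimate $|\Gamma(k+1+2B\mp 2it')|\asymp|t'|^{k+1/2+2B}e^{-\pi|t'|}$, which cancels the $e^{\pi|t'|}$ coming from $1/\cosh(\pi(t'\pm iB))$. The $k$-sum converges by the superexponential decay of $1/\Gamma(k+1+2B)$, yielding $H(x)\ll x^{2B}$ for small $x$. For derivatives $x^j H^{(j)}$ one differentiates the series termwise: $x^j\frac{d^j}{dx^j}(x/2)^{2k-2it}$ is a degree-$j$ polynomial in $t$ times $(x/2)^{2k-2it}$, so the same argument goes through with an extra $O((1+|t|)^j)$ in the integrand; requiring integrability after the shift and taking $2B=C_2$, $j\le C_2$ gives the sufficient condition $C_1\ge 2C_2+2$. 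The large-$x$ bound $x^j H^{(j)}(x)\ll x^{-C_2}$ is much easier, following from the uniform exponential decay $|K_{2it}(x)|\le K_0(x)\ll e^{-x}/\sqrt{x}$ combined with Stirling to control the $e^{\pi|t|}$ from $\sinh(\pi t)$.

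Then the Mellin statement comes out by iterating integration by parts $C_2$ times:
\begin{equation*}
\widehat H(u)=\frac{(-1)^{C_2}}{u(u+1)\cdots(u+C_2-1)}\int_0^{\infty}H^{(C_2)}(x)\,x^{u+C_2-1}\,dx,
\end{equation*}
with the boundary terms vanishing throughout $-C_2<\Re u<C_2$ by the pointwise bound; using $|H^{(C_2)}(x)|\ll\min(x^{-2C_2},1)$, the remaining integral is uniformly bounded on vertical strips inside that range, while the prefactor contributes the claimed $(1+|u|)^{-C_2}$ decay. The main obstacle throughout is verifying that the integrands in the ascending-series expansion are genuinely holomorphic in $|\Im t|<C_1$: without the precise zero condition on $h$, shifting the $t$-contour would pick up residues at the poles of $1/\cosh(\pi t)$ whose sizes grow with $n$ and would destroy the $x^{2B}$ bound; once that cancellation is in place, the rest is bookkeeping of powers of $t$ to balance $B$, $j$, and $C_1$.
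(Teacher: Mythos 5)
Your small-$x$ analysis and the Mellin step are essentially sound, and in fact run parallel to the paper's own argument: the paper uses \eqref{KI}, the recurrence \eqref{diff1} and the bound \eqref{Ibound} where you expand $K_{2it}$ into its ascending series and invoke Stirling, but the mechanism is identical — the zero condition in \eqref{c1} cancels the poles of $1/\cosh(\pi t)$, and a contour shift by about $C_2/2$ extracts the factor $x^{C_2}$. One small slip there: on the shifted line it is $1/\Gamma(k+1\mp 2it)$ (the reciprocal Gamma) that grows like $e^{\pi|t'|}$ while $1/\cosh(\pi(t'\pm iB))$ \emph{decays} like $e^{-\pi|t'|}$, not the other way around as you wrote; the net cancellation you rely on is nevertheless correct. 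The integration-by-parts derivation of the bound for $\widehat H$ also matches the paper.

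The genuine gap is in your treatment of the range $x\geq 1$. Bounding $|K_{2it}(x)|\leq K_0(x)\ll e^{-x}x^{-1/2}$ discards all decay of $K_{2it}(x)$ in $t$, and what remains of the integrand in \eqref{H} is $\sinh(\pi t)\,h(t)\,t$ with $h$ only polynomially decaying, $h(t)\ll(1+|t|)^{-C_1}$; the resulting $t$-integral $\int e^{\pi|t|}(1+|t|)^{1-C_1}dt$ diverges, and there is no Gamma factor left in this integral for ``Stirling'' to control the $e^{\pi|t|}$ from $\sinh(\pi t)$ — polynomial decay cannot beat exponential growth. The decay in $t$ must be retained from the Bessel factor itself: one needs a uniform estimate such as \eqref{hm1} (Harcos--Michel), which says $\cosh(\pi t)K_{2it}(x)\ll e^{\min(0,\pi|t|-x)}\big((1+|t|+x)/x\big)^{1/10}$, so that for $\pi|t|<x$ there is exponential decay in $x-\pi|t|$ and for $\pi|t|\geq x$ the polynomial decay of $h$ takes over. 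Splitting the integral at $|t|\asymp x$ then gives $\frac{d^j}{dx^j}H(x)\ll x^{2-C_1}$, hence $x^jH^{(j)}(x)\ll x^{j+2-C_1}\leq x^{-C_2}$ for $j\leq C_2$; note that this large-$x$ regime, not the small-$x$ contour shift (which only needs $C_1$ a bit larger than $C_2/2$), is what actually forces the choice $C_1=2C_2+2$. As written, your large-$x$ step fails and needs to be replaced by an argument of this kind.
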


\begin{proof} We shall take  $C_1 =2 C_2+2$. First let $x \geq 1$.  For $0\leq j\leq C_2$
it follows from \eqref{diff} and \eqref{hm1} that
\begin{displaymath}
\begin{split}
  \frac{d^j}{dx^j} H(x)   & \ll_j    \int_{0}^{\infty}e^{\min(0, \pi t -x)} (1 + t/x)^{j + 1/10} (1 + t)^{-C_1} t  dt  \\ & \ll_j   x^{2-C_1}  \leq  x^{-j-C_2}
\end{split}
\end{displaymath}
(as can be seen by separately considering the integrals over $0\leq t \leq \frac{x}{2\pi}$ and $\frac{x}{2\pi}\leq t$).

Now let $x \leq 1$. We first apply \eqref{KI} and then
    express the $j$-fold derivative of the integral (\ref{H}) as a sum of  terms using \eqref{diff1}. In those terms containing $I_{2it + n}(x)/\cosh(\pi t)$ for $|n| \leq j$ we shift the contour down to $\Im t = -C_1 +\frac{1}{10}$, while in those terms containing $I_{-2it + n}(x)/\cosh(\pi t)$ for $|n| \leq j$ we shift the contour up to $\Im t = C_1 - \frac{1}{10}$; we then estimate each of these shifted integrals trivially using \eqref{Ibound}. Notice that the contour shifts do not cross poles, since by \eqref{c1} the zeros of $h$ cancel the poles of $ \cosh(\pi t)^{-1}$.  This gives
$$ x^j\frac{d^j}{dx^j} H(x) \ll_j x^j\int_0^{\infty} \frac{x^{2C_1-j-1/5}}{(1+t)^{2C_1 -j + 3/10}} (1+t)^{-C_1} t\, dt \ll x^{2C_1-1/5}\leq x^{C_2},$$
proving \eqref{boundH}. The assertions for $\widehat{H}(u)$ follow from (\ref{boundH}) and  integration by parts.
\end{proof}

Now let $C_2\geq 10$.  Applying \eqref{boundH} with $j=0$, we see that \eqref{afterkuz1} is absolutely convergent in $3/4<\Re s <1$. We now prepare for the second step, the balanced Voronoi formula in the form of Theorem \ref{thm4}.
 Using Mellin inversion we  first write
\begin{displaymath}
\begin{split}
\mathcal{M}^{-}(s; h)  =    \int_{\Re u=v} \widehat{H}(u)   \sum_{ n,  m, c > 0} \frac{a_\Pi(n, m, 1)}{(nm^2)^{1/2 + s}c}   S(- n, 1, c)  \left(4\pi \frac{\sqrt{n}}{c}\right)^{-u} \frac{du}{2\pi i}.
\end{split}
\end{displaymath}
This multiple sum/integral is absolutely convergent if $-C_2 < v < -1/2$ and $\Re (s+\frac v2)> 1/2$, a region which is nonempty if  $3/4<\Re s <1$. Using the functional equation of Theorem \ref{thm4} we continue our calculation as follows:
\begin{equation}\label{applyvoronoi}
\begin{split}
\mathcal{M}^{-}(s; h) &    =   \int_{\Re u=v} \widehat{H}(u)  (4\pi)^{-u}  \frac{1}{2}\left[\mathcal{V}_0\left(\frac{u}{2}, \frac{1}{2} + s\right) -\mathcal{V}_{ 1}\left(\frac{u}{2}, \frac{1}{2} + s\right) \right]   \frac{du}{2\pi i}\\
 & =  \int_{\Re u=v} \widehat{H}(u)  (4\pi)^{-u}  \frac{1}{2}\left[\mathcal{G}_0\left(\frac{1-u}{2}-s\right) \widetilde{\mathcal{V}}_{ 0}\left(-\frac{u}{2}-2s, \frac{1}{2}+s \right) \right.\\
 & \quad\quad\quad \quad\quad\quad\quad \left.-   \mathcal{G}_1\left(\frac{1-u}{2}-s\right) \widetilde{\mathcal{V}}_{  1}\left(-\frac{u}{2}-2s, \frac{1}{2}+s  \right) \right]   \frac{du}{2\pi i} \\
 & =  \frac{1}{2} \int_{\Re u=-v-4\Re{s}} \left(\mathcal{H}_{0}(u;s)   \widetilde{\mathcal{V}}_{ 0}\left(\frac{u}{2}, \frac{1}{2}+s \right)\right.   \\ & \qquad \qquad \qquad
  \qquad \qquad \qquad \quad
  \left.-   \mathcal{H}_{1}(u;s)    \widetilde{\mathcal{V}}_{  1}\left(\frac{u}{2}, \frac{1}{2}+s \right)\right) \frac{du}{2\pi i},
  \end{split}
 \end{equation}
with
\begin{equation}\label{defscriptH}
\mathcal{H} _{\eta}(u;s)   :=  \widehat{H}(-u-4s) (4\pi)^{u+4s}  \mathcal{G}_{\eta}\left(\frac{1+u}{2}+s\right),
\end{equation}
for $\eta=0$ or $1$.
By  \eqref{deltarevised2} and Lemma \ref{lem1}, this function
is holomorphic in $$\{(u, s) \in \Bbb{C}^2 \mid -C_2 < \Re(u+4s) < C_2, \, \Re(u+2s) > 0, \text{ and }
3/4<\Re{s}<1\},$$
and   by (\ref{Gstirling}) and again Lemma \ref{lem1}  it satisfies the estimate
$$\mathcal{H} _{\eta}(u;s)   \ll  \ (1+|u|)^{-C_2 + \Re (4s + 2u)}$$
in this region.

Having applied Voronoi summation in (\ref{applyvoronoi}), we now  prepare for the third and final step:~the application of the Kuznetsov formula in the reverse direction.  Denote by
\begin{equation}\label{defphi}
\phi_{\eta}(x; s)   =  \int_{\Re u=v} \mathcal{H}_{\eta}(u;s)   (4\pi)^{u} x^{-u} \frac{du}{2\pi i}
\end{equation}
the (slightly renormalized) inverse Mellin transform, where  
$$-2 \Re s   <    v    <    \frac{1}{2}C_2 - 2\Re s - \frac{1}{2},$$
in which case the integrand is holomorphic and the integral is absolutely convergent.  Likewise, differentiation under the integral sign gives an absolutely convergent, holomorphic integral for $x^j \phi_\eta^{(j)}(x;s)$, $j = 0, 1, 2,\ldots$, in the smaller range
\begin{equation}\label{vsmallerange}
  -2 \Re s  <   v  <    \frac{1}{2}(C_2-j-1)   -   2\Re s.
\end{equation}
Shifting the contour shows that $x^j\phi_\eta^{(j)}(x;s)=O(x^{-v})$ for any such $v$.  Since $3/4<\Re{s}<1$ and $C_2-j-1-4\Re{s}>2$ for $0\leq j \leq 3$, this easily implies the $x^j\phi_\eta^{(j)}(x;s)\ll \min(x,x^{-3/2})$ condition  for $\phi_\eta(\cdot;s)$ to be admissible in
the Kuznetsov formula (\ref{kuz2}). Thus for $v$ satisfying (\ref{vsmallerange}),
\begin{multline*}
\int_{\Re u=v}  \mathcal{H}_{\eta}(u;s)   \widetilde{\mathcal{V}}_{ \eta}\left(\frac{u}{2}, \frac{1}{2}+s \right) \frac{du}{2\pi i}  = \\ \sum_{\kappa=\pm 1} \sum_{n, m, c > 0} \kappa^\eta \frac{a_{\widetilde{\Pi}}(n, m, 1)}{(nm^2)^{1/2+s }c}  S(\kappa n, 1, c) 
 \phi_\eta\left(\frac{4\pi \sqrt{n}}{c}; s\right)
\end{multline*}
by \eqref{defV}.  Inserting into \eqref{applyvoronoi}, we find
\begin{displaymath}
\begin{split}
\mathcal{M}^-(s;h) =   & \frac{1}{2}\sum_{\kappa=\pm 1} \sum_{n, m, c > 0} \frac{a_{\widetilde{\Pi}}(n, m, 1)}{(nm^2)^{1/2+s }c}  S(\kappa n, 1, c) \\ & \qquad \times 
  \left(\phi_0\left(\frac{4\pi \sqrt{n}}{c}; s\right) -\kappa \phi_1\left(\frac{4\pi \sqrt{n}}{c}; s\right)\right) \\
\end{split}
\end{displaymath}
as an absolutely convergent expression.  Applying \eqref{kuz2} with the test function $\phi_0(\cdot; s)  -\kappa \phi_1(\cdot, s)$   gives
\begin{equation}\label{M-sh3}
\begin{split}
\mathcal{M}^{-}(s; h)   =    \widetilde{\mathcal{M}}^+(s; h_s^+)  +   \widetilde{\mathcal{M}}^{\text{hol}}(s; h_s^{\text{hol}})    +   \widetilde{\mathcal{M}}^-(s; h_s^-) ,
\end{split}
\end{equation}
where
\begin{multline*}
h_s^{\pm}(r)  :=   \frac{1}{2}\int_{0}^{\infty}  \mathcal{J}^{\pm}_r(x) \left(\phi_0(x; s) \mp \phi_1(x; s)\right) \frac{dx}{x} \\ \quad \text{and} \quad h_s^{\text{hol}}(k)   =   h_s^+\left(\frac{k-1}{2i}\right)
\end{multline*}
(see (\ref{Mshdef})--(\ref{Mshdefhol}) and (\ref{ker1})).

The functions $h_s^\pm(r)$ can be rewritten using
Parseval  as
\begin{equation}\label{useParsevalnew}
h_s^{\pm}(r)
 =      \frac{1}{2
 } \int_{\Re u=v} \widehat{\mathcal{J}}^{\pm}_{r}(u
  )  \left(\widehat{\phi}_{0} (-u; s )  \mp   \widehat{\phi}_{1} (-u; s )\right) \frac{du}{2\pi i}
\end{equation}
for small $v>0$, where the Mellin transform $\widehat{\phi}_j(\cdot; s)$ is taken with respect to the first variable.
  In fact,
applying (\ref{Gstirling}) and  \eqref{defscriptH}--\eqref{defphi}  gives the bounds
\begin{multline}\label{HGbound}
  \widehat{\phi}_{0} (-u; s )  \mp   \widehat{\phi}_{1} (-u; s )    \\
  \qquad\qquad =
   \widehat{H}(u-4s) (4\pi)^{-2u+4s}
 \left( \mathcal{G}_{0}\left(\frac{1-u}{2}+s\right) \mp   \mathcal{G}_{1}\left(\frac{1-u}{2}+s\right)\right)  \\
 \ll     (1+|u|)^{4\Re s - 2 \Re u - C_2},
\end{multline}
with an implied constant that depends locally uniformly on $\Re u$ and $s$ throughout the region $4\Re{s}-C_2 < \Re u < 2\Re s + 2\delta$ (in which the left hand side is holomorphic by \eqref{deltarevised2}).

The formula (\ref{formula}) would follow    as the $s=0$ case of (\ref{M-sh3}), were it not for the fact that our present analytic continuation of that formula holds only for   $\Re{s}>3/4$.  We shall now continue its terms to a right half plane containing $s=0$.
An application of
 \eqref{kerhat} and Stirling's formula shows that the integrand in (\ref{useParsevalnew}) is
  \begin{equation}\label{bound-int}
\ll   (1 + |u|)^{-\Re u -1- C_2 + 4\Re s},
  \end{equation}
with an implied constant that depends locally uniformly on $\Re u$, $s$, and $r$. Thus (\ref{useParsevalnew}) is valid for $0<v<2\Re{s}+2\delta$.
Therefore taking $v = \delta$ shows that (\ref{useParsevalnew}) provides an absolutely and locally uniformly convergent expression for $h_s^\pm(r)$ as a holomorphic function in $\{s\mid -\delta/2 < \Re{s}<1\}$.
  In particular, $h_s^\pm(r)$ is bounded for $r\leq 1$, locally uniformly in $s$ in that range.

We next show that $h_s^\pm(r)$ decays sufficiently rapidly for the convergence of the sums $\widetilde{\mathcal{M}}^\pm(s; h_s^\pm)$ 
 in (\ref{M-sh3}).
Let $n$ be an odd positive integer and $r \geq 1$. If $C_2 > 4\Re s + n$, we can shift the contour in (\ref{useParsevalnew}) to $\Re u = -n$, picking up residues at $-2m \pm 2 i r$, $m = 0, 1, \ldots, (n-1)/2$, from the poles of $\widehat{\mathcal J}_r^\pm(u)$: they contribute
\begin{equation*}
  \ll   r^{-1/2 -  C_2 + 4\Re s + 3m}, 
\end{equation*}
with an implied constant that depends on $n$ and locally uniformly on $s$.  To estimate the remaining integral over  $\Re u=-n$, we use the bound
\begin{equation}\label{pm-bound}
\widehat{\mathcal J}_r^\pm(u)   \ll    \big(( 1+ |\Im{u}+2r|)( 1 + |\Im{u}-2r|)\big)^{-(n+1)/2}
\end{equation}
 which follows from (\ref{kerhat}) and Stirling's formula; trivially applying this and (\ref{HGbound}) to the contributions from the three ranges $|\Im{u}|\leq r$, $r<|\Im{u}|\leq 4r$, and $4r<|\Im{u}|$ gives the estimate
\begin{equation}\label{r2}
\ll r^{4\Re{s}+n-C_2}  +   r^{4\Re{s}+2n-C_2-(n+1)/2}\int_r^{4r}(1+|y-2r|)^{-(n+1)/2}dy   +    r^{4 \Re s + n - C_2}.
\end{equation}
This last integral is bounded in $r$, so for any fixed $A>0$ we may  choose $C_2$ sufficiently large (keeping $n$ fixed) to arrange that $h_s^{\pm}(r)=O(|r|^{-A})$ as $r \rightarrow \infty$.   By \eqref{convex}, this ensures that the spectral sums $\widetilde{\mathcal{M}}^{\pm}(s, h_s^{\pm})$ defined in (\ref{Mshdef}) are absolutely and locally uniformly convergent   in  $-\delta/2 < \Re s <1 $, as long as $C_2$ is taken sufficiently large.

The above argument simplifies for
\begin{equation}\label{hhol}
\begin{split}
h_s^{\text{hol}}(k)  = &\frac{1}{2
 }\int_{\Re u =v} \widehat{\mathcal{J}}^{\text{hol}}_{k}(u
 )  \widehat{H}(u-4s) (4\pi)^{-2u+4s} \times \\ &\qquad  \left( \mathcal{G}_{0}\left(\frac{1-u}{2}+s\right) -   \mathcal{G}_{1}\left(\frac{1-u}{2}+s\right)\right)\frac{du}{2\pi i}.
 \end{split}
 \end{equation}
 Indeed, \eqref{bound-int} remains true for this integrand, from which we again conclude the holomorphic continuation of $h_s^{\text{hol}}(k)$ to $-\delta/2 < \Re{s}<1$.  For the decay in $k$, let $n$ be a fixed positive integer.  Since $\widehat{\mathcal{J}}^{\text{hol}}_{k}(u)=
 i^k 2^u \pi \Gamma(\frac{1}{2}(u+k-1)) \Gamma(\frac{1}{2}(1+k-u))^{-1}$ has no poles for $\Re u \geq -n$ when $k\geq n+2$, we may shift the contour to the line $\Re{u}=-n$.  Here $\Gamma(\frac{1}{2}(u+k+2n+1))=
 p_{n+1}(u+k)\Gamma(\frac{1}{2}(u+k-1))$, where $p_{n+1}(x)=2^{-n-1}(x-1)(x+1)\cdots (x+2n-1)$ is a degree $(n+1)$
 polynomial.  Hence for fixed $n$ and $\Re{u}=-n$ we have
   $$\widehat{\mathcal{J}}^{\text{hol}}_{k}(u) \ll \left|
  \frac{\Gamma(\frac{1}{2}(u+k-1))}{\Gamma(\frac{1}{2}(-u+k+1))}\right|
     \ll   (k +|\Im u|)^{-n-1},$$
  instead of the earlier bound \eqref{pm-bound}.   This shows that for any fixed  $A>0$ we may  choose $C_2$ sufficiently large   to arrange that $h_s^{\text{hol}}(k)=O(k^{-A})$ as $k \rightarrow \infty$, and in particular ensure the absolute and locally uniform convergence of $\widetilde{\mathcal M}^{\text{hol}}(s,h_s^{\text{hol}})$.


This completes the analytic continuation of (\ref{M-sh3}) to $s=0$. We obtain the spectral reciprocity formula of Theorem \ref{thm3} with $h^{\diamondsuit} = h^{\diamondsuit}_0$ using (\ref{useParsevalnew})-(\ref{HGbound}), i.e.,
\begin{equation}\label{defkernel}
\begin{split}
 \mathcal{K}^{\pm}(t, r) & =  \frac{8\pi^2}{2} \int_{\Re u = \delta } \widehat{\mathcal J}_r^{\pm}(u)\widehat{\mathcal J}^-_t(u) (4\pi)^{-2u} \times \\ &\qquad\qquad \left( \mathcal{G}_{0}\left(\frac{1-u}{2}\right) \mp   \mathcal{G}_{1}\left(\frac{1-u}{2}\right)\right) \frac{du}{2\pi i}\\
 &   =     \int_{\Re u = \delta } \widehat{\mathcal J}_r^{\pm}(u) 2^{-u} \widehat{\mathcal J}^-_t(u) 2^{-u} \mathcal{G}^{\pm}\left(\frac{1-u}{2}\right) \frac{du}{2\pi i}
 \end{split}
\end{equation}
(recall \eqref{similar}) and
$$\mathcal{K}^{\text{hol}}(t, k)    =   \mathcal{K}^{+}\left(t, \frac{k-1}{2i}\right).$$
By \eqref{kerhat} this matches the definitions \eqref{kernel1} and \eqref{kernel2}.

It remains to prove the  bound \eqref{small} for $\mathcal{K}^+$.  Recalling definition (\ref{E01}), we see that
$$\prod_{j\leq 4}\mathcal{E}_{\beta_j}(u + \mu_j) -\prod_{j\leq 4} \mathcal{E}_{\beta_j + 1}(u + \mu_j)    \ll   e^{\pi |\Im u|}$$
instead of the trivial bound $e^{2\pi  |\Im u|}$,
where the  second condition in (\ref{sumzero}) is used to match and then cancel the powers of $i$ in the leading terms on the left hand side.  Thus we have the  estimate
$$\mathcal{G}^+(u)    \ll    e^{-\pi |\Im u|}  (1+|u|)^{4\Re u - 2}$$
for  \eqref{defG}
in fixed vertical strips (see (\ref{Gstirling})). Shifting the $u$-contour in  (\ref{defkernel}) to $\Re u = -1$, the contribution of the   residues at $u = \pm 2it$ is
\begin{displaymath}
  \ll   e^{\pi |t| - \frac{\pi}{2}(2|t| + |t-r| + |t+r|)} \big((1 + |t|)(1 + |t-r|)(1 + |t+r|)\big)^{-1/2}    \leq   e^{-\pi \max(|t|, |r|)}
\end{displaymath}
and the contribution of the residues at $u = \pm 2ir$ is
\begin{displaymath}
 \ll   e^{\pi |t| - \frac{\pi}{2}(2|r| + |t-r| + |t+r|)} \big((1 + |r|)(1 + |t-r|)(1 + |t+r|)\big)^{-1/2}   \leq   \frac{e^{-\pi   |r|}}{ (1+|t|)}.
\end{displaymath}
The remaining contour integral over $u=-1+2iw$ is
\begin{displaymath}
\begin{split}
& \ll     \int_{-\infty}^{\infty}  \frac{(1 + |w|)^2 }{(1 + |w^2 - t^2|)(1 + |w^2 - r^2|)} e^{\frac{\pi}{2}(2|t| - |w + t| - |w-t| - |w - r| - |w+ r|)} dw\\
&   \qquad  \qquad \leq   \int_{-\infty}^{\infty}  \frac{(1 + |w|)^2 }{(1 + |w^2 - t^2|)(1 + |w^2 - r^2|)}  e^{-\frac{\pi}{2}(|r| + |w|)} dw   \\
&  \qquad \qquad  \qquad \qquad\ll   \frac{e^{-\frac{\pi}{2} |r|}}{ (1+ |t|^2)(1 + |r|^2)}.
\end{split}
\end{displaymath}
This completes the proof of Theorem~\ref{thm3}.\\

\textbf{Remark 1:} While $h^{-}(r)$ can be arranged to decay to any fixed polynomial order, the kernel $\mathcal{K}^-(t, r)$ itself is \emph{not} rapidly decaying in $r$; for fixed $t$ it is of order of magnitude $(1 + |r|)^{-1}$, which by Weyl's law does not suffice to make the spectral $r$-sum absolutely convergent (not even under GRH for the corresponding central $L$-values).  It is the extra integration over $t$ together with the regularity assumptions \eqref{c1} on the test function $h$ that gives adequate decay properties of $h^-$. On the other hand, the bound \eqref{small} shows that the decay of $h^{+}$ happens already on the level of $\mathcal{K}^{+}$. \\

\textbf{Remark 2:} The numerical value $C_1 \geq 40$ in Theorem \ref{thm3} arises as follows:~choosing $n = 5$ and $C_2 = 12 + 4\Re s$, we obtain from (\ref{r2}) that $h_s^{-}(r) \ll (1+|r|)^{-5}$, which by Weyl's law and \eqref{convex} makes the spectral sum absolutely convergent. For $0 \leq \Re s \leq 1$, we conclude that $C_2 = 16$ is admissible and hence $C_1 = 34$ by Lemma \ref{lem1}.

\section{Interlude: description of the kernel}\label{interlude}

The proof of Theorem~\ref{thm3} in Section~\ref{sec4} shows that the passage $h \mapsto h^{\diamondsuit}$ arises from  three steps.

\emph{Step 1 [Kuznetsov (\ref{kuz1})]}: Define
$$H(x)    :=  \int_{-\infty}^{\infty} \mathcal{J}_t^-(x) h(t)d_{\text{spec}}t.$$

\emph{Step 2 [Voronoi (\ref{voronoiFE})]}: Let $\widehat{H}(u)$ denote the Mellin transform of $H$ and define
\begin{equation}\label{step2}
\mathcal{H}^{\pm}(u)   :=    \widehat{H}(-u)  4^u\mathcal{G}^{\pm}\left(\frac{1+u}{2}\right) .
\end{equation}

\emph{Step 3 [Kuznetsov in reverse direction (\ref{kuz2})]}: Let $\phi^{\pm}(x)$ denote the inverse Mellin transform of $\mathcal H^\pm(u)$. Then
\begin{multline}\label{hminusphi}
  h^{\pm}(r)   =    \frac{1}{8\pi^2} \int_0^{\infty} \mathcal{J}^{\pm}_r(x) \phi^{\pm}(x) \frac{dx}{x}    \quad \\ \text{and}\qquad  h^{\text{hol}}(k)    =  \frac{1}{8\pi^2} \int_0^{\infty} \mathcal{J}^{\text{hol}}_k(x) \phi^+(x) \frac{dx}{x}.
\end{multline}
This follows from \eqref{H}, \eqref{useParsevalnew}, \eqref{HGbound},  \eqref{hhol} with $s=0$, and \eqref{similar}.

For completeness we give two additional, alternative expressions for $\mathcal{K}^{-}(t, r)$ (similar expressions hold for  $\mathcal{K}^{\diamondsuit}(t, r)$ in general). Let
$$g(x)  :=   \int_{\mathcal{C}}   \mathcal{G}^{-}(u) x^{-u} \frac{du}{2\pi i},$$
where  the contour $\mathcal{C}$ is taken to the right of  $\Re u = 1/2-\delta$ for $u$ small (to avoid poles -- see (\ref{deltarevised2})), and a bounded distance to the left of $\Re{u}=1/4$ for $u$ large (to converge absolutely -- see \eqref{Gstirling}).
Then starting from (\ref{defkernel}),
\begin{displaymath}
\begin{split}
  \mathcal{K}^{-}(t, r) &   =     \int_{\Re u =\delta}  \mathcal{G}^{-}\left(\frac{1-u}{2}\right)  \widehat{\mathcal{J}}^{-}_t(u)\widehat{\mathcal{J}}^{-}_r(u) 4^{-u} \frac{du}{2\pi i}\\
   &    =     2 \int_{\Re z =\frac{1-\delta}{2}}  \mathcal{G}^{-}(z) \widehat{\mathcal{J}}^{-}_t(1-2z)\widehat{\mathcal{J}}^{-}_r(1-2z) 4^{2z-1} \frac{dz}{2\pi i}\\
  & =    2 \int_{\mathcal{C}}   \int_0^{\infty} \int_0^{\infty} \mathcal{J}^{-}_t(x) \mathcal{J}^{-}_r(y)  (xy)^{-2z}  dx \, dy\, \mathcal{G}^{-}(z) 4^{2z-1} \frac{dz}{2\pi i}\\
  & = \frac{1}{2}  \int_0^{\infty} \int_0^{\infty} \mathcal{J}^{-}_t(x) \mathcal{J}^{-}_r(y) g\! \left(\frac{(xy)^2}{16}\right) dx \,dy,
\end{split}
\end{displaymath}
where all integrals are absolutely convergent (as can be seen from standard bounds on  $\mathcal J_t^-$  and $\widehat{\mathcal{J}}_t^{-}$ along the lines of  (\ref{hm1}) and (\ref{kerhat})).
In practice, one may want to asymptotically evaluate  $g(x)$  as in \cite[(4.11)]{Miller} or \cite[Lemma 6.1]{Li} (e.g., using stationary phase). 

Alternatively, one can shift the $u$-contour  far to the left and pick up the residues at $-2n \pm 2it$ and $-2n\pm 2ir$ for $n \in \Bbb{Z}_{\geq 0}$, which can be written as a sum of quotients having four $\Gamma$-factors  in the numerator and three $\Gamma$-factors in the denominator. Hence the sum of residues can be evaluated in terms of  $_4F_3$ hypergeometric functions \cite[(9.14)]{GR}.  We now describe this in the special case that the $\beta_j$ from (\ref{sumzero}) all  vanish, the other cases being similar. Let
 \begin{displaymath}
\begin{split}
K&(t, r)  :=  \\ & 8\cosh(\pi t) \cosh(\pi r)  \Gamma(2ir)   \Gamma(i(r+ t)) \Gamma(i(r- t))\prod_{j=1}^4  \Gamma\left(\frac{1}{2} + \mu_j - ir \right)   \\
& \times  \left[\cos(\pi(\mu_2 +\mu_3)) + \cos(\pi(\mu_2 +\mu_4)) + \cos(\pi(\mu_3 +\mu_4)) -\cosh(2\pi r)\right]\\
& \times  _4F_3\left(\begin{array}{cccc}\frac{1}{2} + \mu_1 - ir &\frac{1}{2} + \mu_2 - ir & \frac{1}{2} + \mu_3 - ir & \frac{1}{2} + \mu_4 - ir\\ 1 - 2 i r & 1 - i(r+t) & 1 - i(r - t) & \end{array}, 1\right)
\end{split}
\end{displaymath}
for $r, t\in \Bbb{R}$, $rt(t^2 - r^2) \not= 0$. With this notation,we have
$$\mathcal{K}^{-}(t, r)   :=     K(t, r)  +    K(t, -r)  +   K(r, t)   +  K(r,- t)$$
 if $rt(t^2 - r^2) \not= 0$ (otherwise, $\mathcal{K}^{-}(t, r)$ is defined by continuity).


\section{Asymptotic analysis and proof of  Theorem \ref{thm1} }\label{sec5}

 In this section we prove Theorem \ref{thm1}. Let   $h_T$ be as in \eqref{defH}, with $T $ very large and $D \geq 50$ fixed.  We recall the definitions \eqref{defM} and \eqref{defMhol} of $\mathcal{M}^{\diamondsuit}$ for $\diamondsuit \in \{+, -, \text{hol}\}$, as well as the reciprocity formula \eqref{formula} (where we use $\widetilde{\mathcal{M}}^{\diamondsuit}$ to denote the analogous quantities for the dual representation $\widetilde\Pi$).    The bound \eqref{small} together with \eqref{convex} shows that trivially
 $$|\widetilde{\mathcal{M}}^+(h_T^+)|     \ll    \int_{-\infty}^{\infty} h_T(t)    (1+|t|)^{-1}   t \, dt   \ll   T.$$
By \eqref{formula} and \eqref{convex} it suffices to show
\begin{lemma}\label{lem2} If $D$ in \eqref{defH} is sufficiently large, then
\begin{equation*}
  h_T^-(r)   \ll   T \min(|r|^{-1}, |r|^{-5})
\end{equation*}
for $r \in \Bbb{R}$ and
$$h_T^{\text{{\rm hol}}}(k)    \ll   Tk^{-5}$$
for integers $k \geq 2$.
  \end{lemma}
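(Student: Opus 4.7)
The plan is to represent $h_T^\diamondsuit$ as a Mellin-Barnes contour integral using the three-step description in Section \ref{interlude}, then shift the contour in $u$ to extract polynomial decay in $r$ or $k$ while carefully tracking the $T$-dependence throughout. By Parseval applied to Step 3 together with the explicit formula (\ref{HGbound}) at $s=0$, we have
\begin{equation*}
  h_T^-(r) = \frac{1}{16\pi^3 i}\int_{\Re u = \delta} \widehat{\mathcal{J}}_r^-(u)\,\widehat{H}_T(u)\,(4\pi)^{-2u}\left[\mathcal{G}_0\!\left(\frac{1-u}{2}\right) + \mathcal{G}_1\!\left(\frac{1-u}{2}\right)\right]du,
\end{equation*}
and analogously for $h_T^{\mathrm{hol}}(k)$ with $\widehat{\mathcal{J}}_k^{\mathrm{hol}}(u)$ replacing $\widehat{\mathcal{J}}_r^-(u)$. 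The key input will be a sharp bound $|\widehat{H}_T(u)| \ll T(1+|u|)^{-C_2}$ uniformly in a fixed vertical strip, where $C_2$ can be taken as large as $\sim D$. The $|u|$-decay is inherited from Lemma \ref{lem1} applied with $h = h_T$ (the zeros of $h_T$ at $\pm(n-\frac{1}{2})i$ for $n \leq D$ allow $C_1$ up to $\sim D$), while the linear scaling in $T$ is extracted by substituting $t = Ts$ in (\ref{H}), using that $\mathcal{J}_t^-(x) = 4\cosh(\pi t)K_{2it}(x)$ is of order $O(|t|^{-1/2})$ with oscillatory behavior, combined with the effective support $|t| \lesssim T$ of $h_T$.

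For $|r| \leq 1$ (and bounded $k$), we would estimate trivially on the contour $\Re u = \delta$: the Gamma products in $\widehat{\mathcal{J}}_r^-(u)$, $\widehat{\mathcal{J}}_k^{\mathrm{hol}}(u)$, and $\mathcal{G}^\pm((1-u)/2)$ grow at most polynomially in $|u|$, and are dominated by the rapid $|u|$-decay of $\widehat{H}_T$, so the integral is $O(T)$. Since $T \leq T/|r|$ for $|r| \leq 1$, this already establishes the desired bound in the small-$r$ regime.

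For $|r| \geq 1$, we shift the contour from $\Re u = \delta$ to $\Re u = -9-\epsilon$, picking up residues of $\widehat{\mathcal{J}}_r^-(u)$ at $u = \pm 2ir - 2m$ for $m = 0, 1, 2, 3, 4$ (from the poles of $\Gamma(u/2\pm ir)$). The $m$-th residue combines the Gamma residue with $\cosh(\pi r)\Gamma(-m\mp 2ir) \ll |r|^{-m-1/2}$, the factor $\mathcal{G}^-(\frac{1}{2}+m\pm ir) \ll |r|^{4m}$ from (\ref{Gstirling}), and the Mellin-transform value $\widehat{H}_T(\pm 2ir - 2m) \ll T(1+|r|)^{-C_2}$; their product is $\ll T|r|^{3m-1/2-C_2}$, which is $\ll T|r|^{-5}$ for $m \leq 4$ provided $C_2 \geq 17$. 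The tail integral on $\Re u = -9-\epsilon$ is bounded by $T|r|^{-9}$ by a similar argument. For $h_T^{\mathrm{hol}}(k)$, the kernel $\widehat{\mathcal{J}}_k^{\mathrm{hol}}(u)$ has no poles in $\Re u \geq -9$ whenever $k \geq 11$, so shifting directly to $\Re u = -9$ (no residues encountered) and invoking the Stirling estimate $|\widehat{\mathcal{J}}_k^{\mathrm{hol}}(u)| \ll (k+|\Im u|)^{-9}$ yields $h_T^{\mathrm{hol}}(k) \ll T/k^9 \ll T/k^5$.

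The main technical difficulty is establishing the sharp linear $T$-scaling of $\widehat{H}_T(u)$: a naive Stirling bound would give only $|\widehat{H}_T(u)| \ll T^{1+\Re u}$, which becomes $T^{1+\delta}$ on $\Re u = \delta$ and is insufficient for the clean $O(T)$ factor in the final estimate. The improvement requires exploiting the oscillation of the Bessel kernel $K_{2it}(x)$ via integration by parts or stationary phase, and the assumption $D \geq 50$ furnishes enough vanishing of $h_T$ to support these estimates as well as the contour manipulations of Lemma \ref{lem1} with the large values of $C_1$ and $C_2$ demanded by the residue calculation.
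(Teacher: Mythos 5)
Your overall Mellin--Barnes strategy (write $h_T^{\diamondsuit}$ as a $u$-integral against $\widehat H_T$, shift the contour, collect residues of $\widehat{\mathcal J}^-_r$) is legitimate --- it is essentially how the paper itself obtains the $r$-decay of $h_s^\pm$ in Section \ref{sec4} --- but your proof rests on a key estimate that is false. You claim $\widehat H_T(u)\ll T(1+|u|)^{-C_2}$ uniformly in a fixed vertical strip. The correct bound, which the paper derives from Lemma \ref{lem3}, is $\widehat H_T(u)\ll T^{1+\Re u}(1+|u|)^{-D}$, and the factor $T^{\Re u}$ cannot be removed: for real $u>0$ the integrand in $\widehat H_T(u)=\frac{2^u}{2\pi^2}\int_{\Bbb{R}}\Gamma(u/2+it)\Gamma(u/2-it)\cosh(\pi t)\,h_T(t)\,t\tanh(\pi t)\,dt$ is nonnegative (since $\Gamma(u/2+it)\Gamma(u/2-it)=|\Gamma(u/2+it)|^2$ and $h_T\geq 0$), and $h_T$ concentrates at $t\asymp T$, so $\widehat H_T(u)\asymp_u T^{1+u}$ there; equivalently, by Lemma \ref{lem3} the function $H_T$ lives at $x\asymp T$ with height $\asymp T$, forcing this Mellin size. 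Your proposed rescue (``exploiting the oscillation of $K_{2it}$'') cannot help: that oscillation is precisely what produces Lemma \ref{lem3}, and at real $u$ there is nothing left to cancel. Consequently your treatment of the ranges $|r|\le 1$ and bounded $k$ --- a trivial estimate on the line $\Re u=\delta$ --- yields only $O(T^{1+\delta})$ with $\delta>0$ a fixed constant depending on $\Pi$, which neither proves the lemma nor would suffice for Theorems \ref{thm1} and \ref{thm2}, where the main term is $T\log T$.

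The large-$r$ and large-$k$ parts of your argument survive (indeed improve) once the correct bound is used, since at the residue points $\Re u=-2m$ and on the shifted line $\Re u=-9$ one has $T^{1+\Re u}\le T$. The problematic ranges can be repaired within your framework by shifting to $\Re u=-\delta$ there as well: for $0<|r|\le 1$ this crosses only the poles at $u=\pm 2ir$, whose residues are $\ll \cosh(\pi r)\,|\Gamma(\mp 2ir)|\,|\widehat H_T(\pm 2ir)|\ll T/|r|$, matching the allowed bound $T\min(|r|^{-1},|r|^{-5})$, while the remaining contour contributes $O(T^{1-\delta})$; for $2\le k\le 10$ the kernel $\widehat{\mathcal J}^{\text{hol}}_k$ has no poles in $\Re u>-1$, so the shift is free. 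For comparison, the paper avoids the contour shift at this stage entirely: it converts $\widehat H_T(u)\ll T^{1+\Re u}(1+|u|)^{-D}$ into the statement \eqref{inverse} that $\phi^\pm_T$ has size $T$ times a function of $xT$, so the final Bessel integral localizes at $x\lesssim 1/T$ and gives the clean factor $T$, and the decay in $r$ and $k$ is then extracted in physical space from Bessel estimates (power series plus repeated integration by parts, and \eqref{J-new}) rather than from residues.
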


The rest of this section is devoted to the proof of Lemma \ref{lem2}.  We  follow the   three steps of Section \ref{interlude}, beginning with an analysis of (\ref{H}),
\begin{equation*}
  H_T(x)    =    \int_{-\infty}^{\infty} \mathcal{J}_t^-(x) h_T(t)  d_{\text{spec}}t    =  4\int_{-\infty}^{\infty} K_{2it}(x)\sinh(\pi t) h_T(t)  t  \frac{dt}{2\pi^2},
\end{equation*}
in the $T$-aspect. As an analogue of Lemma \ref{lem1} we obtain the following result  whose proof uses similar ideas as \cite[Lemma 3.8]{BK}.

\begin{lemma}\label{lem3} Let $j \in \Bbb{Z}_{\geq 0}$ and suppose that $D \geq \max(7, j)$. Then
\begin{equation}\label{lemma4equation}
  x^j \frac{d^j}{dx^j} H_T(x)   \ll_{D, j}   T \min\left(\left(\frac{x}{T}\right)^{D/2}, \left(\frac{x}{T}\right)^{-D/2}\right).
\end{equation}
\end{lemma}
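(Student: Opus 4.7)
My plan is to mirror the proof of Lemma~\ref{lem1}, but with careful tracking of the $T$-dependence throughout.  I would split the analysis into the regimes $x\geq T$ and $x\leq T$, using direct Bessel-function estimates on the real axis in the former and a contour shift in $t$ exploiting the zeros of $h_T$ in the latter.

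For $x\geq T$, I would invoke (\ref{diff}) and (\ref{hm1}) to bound $x^j\frac{d^j}{dx^j}\bigl(K_{2it}(x)\cosh(\pi t)\bigr)\ll e^{\min(0,\pi t-x)}(1+t/x)^{j+1/10}$ as in the large-$x$ half of Lemma~\ref{lem1}, and substitute into the integral defining $H_T$ and its $x$-derivatives.  The contribution from $t>x/\pi$ is negligible by the combination of the exponential decay of $e^{\min(0,\pi t-x)}$ with the Gaussian factor in $h_T$ (since $x\geq T$ places $t>x/\pi$ well into the tail of $h_T$).  For the main contribution $t\leq x/\pi$, I would shift the $t$-contour upward by $iv$ for a parameter $v\asymp D$; this is legitimate because the double zeros of $h_T$ at $\pm i(k-\tfrac12)$ cancel the simple poles of $1/\cosh(\pi t)$ for $k\leq D$, and on the shifted contour the $K$-Bessel factor acquires additional decay in $x$ to the tune of $(T/x)^{D/2}$, yielding the claimed bound.

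For $x\leq T$, I would follow the small-$x$ half of the Lemma~\ref{lem1} proof: apply (\ref{KI}) to write $\mathcal J_t^-(x)$ in terms of $I_{\pm 2it}(x)$, differentiate using (\ref{diff1}), and shift the $t$-contour of the $I_{2it}$ piece downward (and the $I_{-2it}$ piece upward) by $iv$ with $v\asymp D$.  Again the cancellation of zeros of $h_T$ and poles of $1/\cosh(\pi t)$ ensures that no residues are picked up.  By (\ref{Ibound}), each $I_{\pm 2it}(x)$ on the shifted contour carries a factor $(x/2)^{2v}$.  Combined with the estimate $|h_T(u\mp iv)|\ll(1+|u|)^{4D}e^{-u^2/T^2}T^{-4D}$ on the shifted line and the $1/|\cosh|$ factor, careful evaluation of the integral over $\Re t=u$ produces the bound $T(x/T)^{D/2}$, with the constant $T$ out front arising as the effective $L^1$-mass of $h_T$ against the Plancherel measure on the shifted contour.

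The main technical obstacle is extracting the sharp constant $T$ rather than $T^{3/2}$, which would be the naive bound once one accounts for the $1/\sqrt{|t|}$ decay of the Bessel kernel combined with the $|t|\asymp T$ support of $h_T$.  The missing gain of $\sqrt{T}$ reflects oscillation built into the $\Gamma$-function normalizations $\Gamma(1\pm 2it)$ inside $I_{\pm 2it}(x)$, which is extracted cleanly by the contour shift rather than by explicit stationary-phase analysis.  The hypothesis $D\geq\max(7,j)$ provides enough zero-cancellation for the contour shift by $iv$ with $v\asymp D$ to be admissible past the relevant poles of $1/\cosh(\pi t)$, and enough additional polynomial suppression to absorb the $j$ derivatives while keeping the shifted integrals absolutely convergent.
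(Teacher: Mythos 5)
There is a genuine gap: your two-regime scheme breaks down precisely in the bulk range $1\ll x\ll T^{13/12}$, which is where the real work lies. For $x\geq T$ but $x\asymp T$ (say $T\leq x\leq \pi T$), the region $t>x/\pi$ is \emph{not} in the Gaussian tail of $h_T$ — there $e^{-(t/T)^2}\asymp 1$ — and in that region $\mathcal J_t^-(x)$ is oscillatory of size about $|t|^{-1/2}$, so the trivial bound is $\asymp T^{3/2}$, exceeding the target $T$. Your proposed remedy, shifting the $t$-contour on the truncated piece $t\leq x/\pi$, is not available as stated (a sharply truncated integral does not shift without endpoint terms), and an order-shift of $K_{2it}(x)$ does not produce a factor $(T/x)^{D/2}$ in this uniformity range; in particular the asserted extraction of the missing $\sqrt T$ ``from the $\Gamma$-function normalizations via the contour shift'' is not substantiated. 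On the other side, for $1\leq x\leq T$ your use of \eqref{KI}, \eqref{diff1} and \eqref{Ibound} is restricted by the hypothesis $0<x<(1+|t|)^{1/2}$ in \eqref{Ibound}: on the shifted contour the portions with $|t|\lesssim x^2$ are not covered, and there the individual functions $I_{\pm 2it+n}(x)$ grow like $e^{x}$, with the cancellation that reconstitutes the exponentially small $K$-Bessel function invisible to termwise bounds. Even where \eqref{Ibound} does apply, the resulting estimate is of the shape $T^{3/2}(x/T)^{2v}$, again short of $T(x/T)^{D/2}$ in the range $T^{1/2}\leq x\leq T$. This is why the paper confines the $I$-Bessel contour-shift argument to $x\leq 1$ (and the crude $K$-Bessel decay argument to $x\geq T^{13/12}$).

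The mechanism the paper uses in the intermediate range is different in kind: via \eqref{four} one writes $H_T(x)=\int_{\RR} e^{ix\sinh(v/2)}\check h_{\mathrm{spec}}(v)\,dv$, exploits that $\check h_{\mathrm{spec}}$ has mass $\asymp T^2$ concentrated in $|v|\ll T^{-3/4}$ (estimates \eqref{combine}, \eqref{combine1}), converts $x$-derivatives into $v$-integrations by parts, truncates, and Taylor-expands $e^{ix\sinh(v/2)}$ about $v=0$; Fourier inversion then identifies the main term with (derivatives of) $x^n h_{\mathrm{spec}}(x/2)$ itself. The crucial smallness for $1\leq x\ll T$ thus comes from the polynomial $P_T$ in \eqref{defH}, which makes $h_{\mathrm{spec}}(x/2)\ll x(x/T)^{4D}$, and for $x\gg T$ from the Gaussian — not from any contour shift gaining powers of $x$. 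The zeros of $h_T$ at $\pm i(n-\tfrac12)$ do play the role you assign them, but only in the peripheral ranges ($x\leq 1$, and in proving the exponential decay \eqref{combine1} of $\check h_{\mathrm{spec}}$); they cannot by themselves deliver the bound in the transition region, so your proof as proposed does not close.
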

%

\begin{proof} 
Fix $D$, $j$, and three positive integers $A_1, A_2, A_3$ (which will be chosen later); all implied constants in the proof may depend on these parameters.  We argue separately in three ranges for $x$.\\

\noindent
\emph{Range I:  $x \leq 1$.} We proceed similarly as in the proof of Lemma \ref{lem1} by applying \eqref{KI}   and differentiating under the integral sign   using \eqref{diff1}. 
In those terms containing $I_{2it + n}(x)/\cosh(\pi t)$ for $|n| \leq j$ we shift the contour  to $\Im t = -D$, while in those terms containing $I_{-2it + n}(x)/\cosh(\pi t)$ for $|n| \leq j$ we shift the contour   to $\Im t = D$. The polynomial $P_T$ in the definition \eqref{defH} ensures that no poles are crossed during the contour shifts. Now we  estimate trivially using \eqref{Ibound} 
and obtain 
$$\aligned x^j \frac{d^j}{dx^j} H_T(x)  & \ll  x^j \int_0^{\infty} e^{-t^2/T^2} \left(\frac{1+t}{T}\right)^{4D} \frac{x^{2D-j}}{(1+t)^{2D-j+1/2}} t  dt \\ &\ll  \frac{x^{2D}}{T^{2D-j- 3/2}}   \leq T\left(\frac{x}{T}\right)^{D/2}
\endaligned$$
for $x \leq 1$ and $D \geq \max(7, j) $. \\

\noindent
\emph{Range II: $1 \leq x \leq T^{13/12}$.} Let
$$h_{\text{spec}}(t)   :=   h_T(t)\frac{d_{\text{spec}}t}{dt}  =  \frac{1}{2\pi^2}  h_T(t) t \tanh(\pi t),$$
which has  Fourier transform
\begin{equation}\label{fourier-hspec}
\check{h}_{\text{spec}}(v)   =  \int_{-\infty}^{\infty} h_{\text{spec}}(t) e^{-itv} dt   =  \frac{T^2}{2\pi^2} \int_{-\infty}^{\infty} e^{-t^2}  P_T(tT) \tanh(\pi t T) e^{-itTv} t \, dt.
\end{equation}
We note that
\begin{displaymath}
\begin{split}
\frac{d^n}{dt^n} P_T(tT) &   \ll_n   T^{-4D + n} (1 +  |t|T)^{4D-n} \quad (n \geq 0), \\
 \frac{d^n}{dt^n}  \tanh(\pi t T) &   \ll_n   \frac{T^n}{\cosh(\pi t T)^2} \quad (n \geq 1),
 \end{split}
 \end{displaymath}
and clearly $ \frac{d^n}{dt^n}e^{-t^2}t^{j+1} \ll_{n,j} e^{-t^2} (1 + |t|)^{n+j+1}$, so that by the Leibniz rule
$$\frac{d^n}{dt^n} e^{-t^2} P_T(tT)\tanh(\pi t T) t   \ll_n    e^{-t^2} (1 + |t|)^{n+1}T^{-4D + n} (1 +  |t|T)^{4D-n}$$
for all integers $n,j\geq 0$. Applying $\frac{d^j}{dv^j}$  and then integrating by parts $A_1$ times,
  we have
\begin{equation*}
\aligned
\check{h}^{(j)}_{\text{spec}}(v) & \ll  \frac{T^{2+j} }{(T|v|)^{A_1}} \int_{-\infty}^{\infty}  e^{-t^2} (1+|t|)^{A_1 + j+1}  T^{-4D + A_1} (1 +  |t|T)^{4D-A_1}  dt\\ & \ll   \frac{T^{2+j} }{ (T|v|)^{A_1}}
\endaligned
\end{equation*}
for any fixed $  j \geq 0$ and any $A_1 \leq 4D$.  Combining this with the case $A_1=0$, we deduce
\begin{equation}\label{combine}
\check{h}^{(j)}_{\text{spec}}(v)  \ll     T^{2+j} (1 + T|v|)^{-A_1}
\end{equation}
for $j \geq 0$ and $A_1 \leq 4D$. Alternatively, we apply $\frac{d^j}{dv^j}$ to  (\ref{fourier-hspec})    and then shift the contour to $\Im t =D$ (if $v < 0$) or $\Im t = -D$ (if $v > 0$); this crosses no poles because $h_T$ vanishes at the poles of $\tanh(\pi t)$ in this region.
Therefore
\begin{equation}\label{combine1}
 \check{h}^{(j)}_{\text{spec}}(v)  \ll     T^{2+j} e^{-D|v|}
 \end{equation}
(as is also clear from (\ref{defH}) and Paley-Wiener theory).

Having the bounds \eqref{combine} and \eqref{combine1} available,
we shall now derive an alternative expression for $H_T(x)$ and its derivatives (in \eqref{v-int2} below) that will be easier to analyze since it features the highly-localized Fourier transform $\check{h}_{\text{spec}}$ instead of $h_T$ itself.  
Applying \eqref{four} shows
\begin{equation}\label{v-int1}
H_T(x)  =    \int_{-\infty}^{\infty}{\mathcal J}_t^{-}(x) h_{\text{spec}}(t) dt  =     \int_{-\infty}^{\infty}e^{ix \sinh (v/2)}  \check{h}_{\text{spec}}(v)  dv.
\end{equation}
Since
$$x \frac{d}{dx} e^{i x \sinh (v/2)}   =   2\tanh(v/2)\frac{d}{dv} e^{i x \sinh (v/2)}, $$
we can combine each differentiation in   $x$ with an integration by parts  to obtain
\begin{equation*}
\left(x \frac{d}{dx}\right)^j H_T(x)  =   \int_{-\infty}^{\infty}e^{ix \sinh (v/2)}   \mathcal{D}^j\check{h}_{\text{spec}}(v) dv,
\end{equation*}
where
$$\mathcal{D}f(v)  :=  - 2\frac{d}{dv} (\tanh (v/2) f(v))$$
(these integrals all converge absolutely by  \eqref{combine1}).
The $j$-fold composition $\mathcal D^j$ can be expressed as a finite sum
 $$\mathcal D^j  =  \sum_{n=0}^j\sum_{a\geq 0}\sum_{b\geq n}\tilde{\rho}_{a,b,n}\operatorname{sech}(v/2)^a \tanh(v/2)^b \frac{d^n}{dv^n},$$
  with $\tilde{\rho}_{a, b, n} \in \Bbb{R}$.
  Since the differential operators   $\frac{d}{dx}$ and $x$ have commutator $\frac{d}{dx}x-x\frac{d}{dx}=1$, by moving all occurrences of $\frac{d}{dx}$ as far to the right as possible  we obtain the expression
  \begin{equation}\label{v-int2}
x^j \frac{d^j}{dx^j} H_T(x)    =     \int_{-\infty}^{\infty}e^{ix \sinh (v/2)}   (p_j(\mathcal{D})\check{h}_{\text{spec}})(v) dv,
\end{equation}
  where $p_j$ is a polynomial of degree $j$ and
  \begin{equation}\label{pjDsum}
  p_j(\mathcal D)   =  \sum_{n=0}^j\sum_{a\geq 0}\sum_{b\geq n} \rho_{a,b,n}\operatorname{sech}(v/2)^a \tanh(v/2)^b \frac{d^n}{dv^n}  ,  \quad   \rho_{a, b, n}  \in  \Bbb{R},
  \end{equation}
is a finite sum.
  Using the boundedness of $\operatorname{sech}(v/2)$ we thus have
 that
 \begin{equation}\label{Djhcheckbd}
  (p_j(\mathcal D) \check{h}_{\text{spec}})(v)    \ll
   \sum_{c = 0}^j |\check{h}^{(c)}_{\text{spec}}(v) \tanh(v/2)^{c}|
   \ll   T^{2} (1 + T|v|)^{-A_1+j}
 \end{equation}
for $j+2 \leq A_1 \leq 4D$  by \eqref{combine},  where in the first inequality we used the boundedness of $\tanh(v/2)$ and in the second inequality we instead used the bound $\tanh(v/2)\ll v$.

 In light of the estimate (\ref{Djhcheckbd}), we can truncate the $v$-integral \eqref{v-int2} at $|v| \leq T^{-3/4}$ at the cost of an error
\begin{equation}\label{e0}
  \ll   T^2 \int_{T^{-3/4}}^{\infty} (1 + Tv)^{-A_1+j} dv    \ll    T^{  2   + (j-A_1 -3)/4}.
   \end{equation}
From now on we assume $|v| \leq T^{-3/4}$ and approximate the integrand in \eqref{v-int2} by various Taylor expansions in a neighbourhood of $v=0$.
Since $x \leq T^{13/12}$, we have $|xv^3| \leq T^{-7/6}$,
 so we can write 
$$  e^{i x \sinh (v/2) }   =   e^{ixv/2} \sum_{\beta = 0}^{A_2} \frac{(i x)^{\beta} (\sinh (v/2) - v/2)^\beta}{\beta!}  + O\left(T^{-7(A_2+1)/6}\right)$$
for any fixed positive integer $A_2$. Now writing
$$ (\sinh (v/2) - v/2)^\beta   =  \sum_{\alpha= 3\beta}^{3A_2} \tilde{c}_{\alpha, \beta} v^{\alpha} + O(T^{-3(3A_2 +1)/4})$$
for certain  constants $\tilde{c}_{\alpha, \beta} $, we obtain
\begin{equation}\label{taylor1}
 e^{i x \sinh (v/2) }   =  e^{ixv/2}  \sum_{\alpha=0}^{3A_2} \sum_{\beta= 0}^{\lfloor \alpha/3 \rfloor} c_{\alpha, \beta}  x^{\beta} v^{\alpha} + O\left(T^{-\frac{7}{6}A_2 - \frac{3}{4}}\right)
 \end{equation}
  with constants $c_{\alpha, \beta}$ (recalling that $x \leq T^{13/12}$).

   Similarly, expanding $\operatorname{sech}(v/2)^a \tanh(v/2)^b$ about $v=0$, we see from \eqref{combine} and \eqref{pjDsum} that
 \begin{multline}\label{taylor2}
 \begin{split}
 (p_j( \mathcal{D})&\check{h}_{\text{spec}})(v)  =    \\ & \sum_{n=0}^j \Bigl[ \Bigl(\sum_{\gamma = n}^{A_3} d_{n, \gamma} v^{\gamma }\Bigr)   \check{h}_{\text{spec}}^{(n)}(v) + O\left(T^{-3(A_3  + 1)/4} T^{2 + n}(1 + T|v|)^{-A_1}\right)\Bigr],
 \end{split}
  \end{multline}
       where $d_{n, \gamma}$ are constants.  Recalling (\ref{e0}) and  substituting
        (\ref{taylor1}) and (\ref{taylor2}) into the truncated version of  \eqref{v-int2}, we conclude that $x^j   H^{(j)}_T(x) $ equals
\begin{equation}\label{firststep}
\begin{split}
 \sum_{\alpha=0}^{3A_2} \sum_{\beta= 0}^{\lfloor \alpha/3\rfloor} & c_{\alpha, \beta}  x^{\beta}      \sum_{n=0}^j   \sum_{\gamma = n}^{A_3} d_{n, \gamma} \\ & \int_{-T^{-3/4}}^{T^{-3/4}} e^{ixv/2}  v^{\alpha+\gamma} \check{h}^{(n)}_{\text{spec}}(v) dv  + R_1 +  R_2   +  O(T^{2+(j-A_1-3)/4}),
 \end{split}
\end{equation}
where
\begin{equation}\label{e1}
R_1   \ll  T^{-\frac{7}{6}A_2 - \frac{3}{4}} \sum_{n=0}^j\int_{-T^{-3/4}}^{T^{-3/4}}  T^{2+n}|v|^n (1 + T|v|)^{-A_1} dv     \ll   T^{-\frac{7}{6}A_2 + \frac{1}{4}}
\end{equation}
accounts for the contribution of the error term in \eqref{taylor1} and the main term in \eqref{taylor2} (using \eqref{combine}, $|v| \leq T^{-3/4} \leq 1$, and the assumption $A_1 \geq j+2$), and
\begin{equation}\label{e2}
\aligned
 R_2  &  \ll    T^{-3(A_3 + 1)/4} T^{2 + j}  \int_{-T^{-3/4}}^{T^{-3/4}} (1 + T|v|)^{-A_1}  dv  \\ &  \ll    T^{-3(A_3 + 1)/4} T^{1 + j}    =  T^{(1-3A_3 +4j)/4}.
 \endaligned
  \end{equation}
 accounts for the  contribution of \eqref{taylor1} and the error term in \eqref{taylor2}.

 We now complete the integral in \eqref{firststep} to $(-\infty, \infty)$ at the cost of introducing an error (using \eqref{combine} and $x \leq T^{13/12}$)
\begin{equation*}
\ll   \sum_{\alpha=0}^{3A_2}   T^{13\alpha/36}     \sum_{n=0}^j   \sum_{\gamma = n}^{A_3}   \int_{T^{-3/4}}^{\infty}  v^{\alpha+\gamma} T^{2+n} (Tv)^{-A_1} dv    \ll   T^{(5-A_1+j)/4},
\end{equation*}
provided $A_1 \geq 3A_2 + A_3 + 2$.  (Note that this error coincides with the error  \eqref{e0} from truncating (\ref{v-int2}).) We now choose $A_1$ to be as large as possible,  and  $A_2, A_3$ to simultaneously nearly get  equality in this last condition    and to nearly equalize\footnote{The expressions given here have simpler fractions than the actual equalizers.} \eqref{e1} and \eqref{e2}:
$$A_1=4D, \quad  A_2 = \Big\lfloor \frac{1}{7}(6D - 2j - 3)\Big\rfloor\ , \quad \text{and} \quad A_3 = \Big\lfloor\frac{2}{21}(14D + 9j - 7)\Big\rfloor.$$
With these choices the three error terms in  \eqref{firststep} contribute
$$\aligned & \ll   T^{\frac{23}{12} + \frac{j}{3} - D} + T^{\frac{3}{2} + \frac{5}{14}j - D}+ T^{\frac{5}{4} + \frac{j}{4} - D}  \ll  T^{2 + \frac{5}{14}j - D}  \\ &\leq   T \min\left(\Big(\frac{x}{T}\Big)^{-D/2}, \Big(\frac{x}{T}\Big)^{D/2}\right)
\endaligned$$
for $1 \leq x \leq T^{13/12}$ and $\max(2, j) \leq D$.

The main term, i.e., the expression \eqref{firststep}  but with integration extended over the real line, is by Fourier inversion a linear combination of terms of the form
\begin{displaymath}
\begin{split}
x^{\beta} \frac{d^{\alpha+\gamma}}{dx^{\alpha+\gamma}} \left(x^{n} h_{\text{spec}}(x/2)\right)   \ll   \frac{x^{\beta}}{T^{4D}} e^{-x^2/T^2} x^{4D+n+1}\left(  \frac{x}{T^2} + \frac{1}{x}\right)^{\alpha+\gamma}
\end{split}
\end{displaymath}
 with $\alpha \leq 3A_2$, $\beta \leq \alpha/3$, $n \leq j \leq D$, $n \leq \gamma \leq A_3$ (where we have used that $x \geq 1$ in obtaining this bound).
 If $1 \leq x \leq T$  this is $\ll x (x/T)^{4D}$, while if $T \leq x \leq T^{13/12}$  this is
\begin{multline*}  \ll   e^{-x^2/T^2} x \left(\frac{x}{T}\right)^{4D}  \frac{x^{\beta+\alpha+\gamma+n}}{T^{2(\alpha+\gamma)}}  \ll
 e^{-x^2/T^2} x \left(\frac{x}{T}\right)^{4D}  \frac{x^{\beta+\alpha}}{T^{2\alpha}}
  \frac{x^{\gamma+n}}{T^{2\gamma}}  \\  \ll
  e^{-x^2/T^2} x \left(\frac{x}{T}\right)^{4D+2\alpha+2\gamma}
    \ll Te^{-x^2/T^2}\left(\frac{x}{T}\right)^{4D+6A_2+2A_3+1}  \\  \leq  Te^{-x^2/T^2}\left(\frac{x}{T}\right)^{12 D}\qquad\qquad\qquad\qquad\qquad\qquad\qquad,
 \end{multline*}
and
  (\ref{lemma4equation}) follows from the rapid decay of the exponential.\\

\noindent
\emph{Range III: $x \geq T^{13/12}$.} We use the rapid decay of the Bessel $K$-function:~by \eqref{diff} and  \eqref{hm1} and splitting the integral at $|t| = \frac{x}{3\pi}$, we have
\begin{equation*}
\begin{split}
x^j & \frac{d^j}{dx^j} H_T(x) \\ &  \ll  x^j \int_{-\infty}^{\infty} e^{-t^2/T^2} \left(\frac{1+ |t|}{T}\right)^{4D} e^{\min(0, -x+\pi |t|)} \left(\frac{|t| + x}{x}\right)^{j + 1/10} |t| \, dt \\
& \ll  x^j \left(\frac{x}{T}\right)^{4D} e^{-2x/3} x^2  + x^j  \int_{|t| \geq x/(3\pi)} e^{-t^2/T^2} \left(\frac{|t|}{T}\right)^{4D} \left(\frac{|t|}{x}\right)^{j+1} |t|  \, dt\\
& \ll   e^{-x/2}  \  + \  T^{j+3} x^{-1} e^{-x^2/(10T)^2}   \ll    e^{-x/2}   +   x^{j+2}e^{-x^2/(10T)^2},
\end{split}
\end{equation*}
which is certainly dominated by the desired bound $T (x/T)^{-D/2}$.
This completes the proof of  Lemma \ref{lem3}.
\end{proof}

\begin{proof}[Proof of Lemma~\ref{lem2}]
As in the proof of Lemma~\ref{lem1}, the estimates of
 Lemma \ref{lem3} imply that the Mellin transform $\widehat{H}_T(u)$ is holomorphic in $-D/2 < \Re u < D/2$; moreover taking  $j=D$ in (\ref{lemma4equation})  implies the bound
$$\widehat{H}_T(u)  \ll_{\Re u}    T^{1+\Re u} (1 + |u|)^{-D}$$
in this region (using integration by parts). Therefore the function $\mathcal{H}^{\pm}_T(u)=$ $\widehat{H}_T(-u)  4^u\mathcal{G}^{\pm}(\frac{1+u}{2})$ from \eqref{step2}  is holomorphic in $-2\delta < \Re u < D/2$ by \eqref{deltarevised2}, and satisfies the bound
$$\mathcal{H}^{\pm}_T(u)   \ll_{\Re u}    T^{1-\Re u}  (1 + |u|)^{-D + 2\Re u}$$
using (\ref{Gstirling}).
This in turn implies that the inverse Mellin transform $\phi^{\pm}_T$ of $\mathcal{H}^{\pm}_T$ satisfies
\begin{equation}\label{inverse}
\aligned
x^j  \frac{d^j}{d x^j} \phi^{\pm}_T(x) &  \ll   x^{-\Re v} \int_{\Bbb{R}} |\mathcal{H}^{\pm}_T(v+it)| (1 + |v+it|)^j  dt  \\ & \ll   T \min\left((xT)^{\delta}, (xT)^{-D/3}\right)
\endaligned
\end{equation}
upon choosing $v = -\delta$ or $v = D/3$,
provided that  $j < \frac{1}{3}D - 1$ (to ensure integrability).

It remains to bound $h^{-}_T(r)$ and $h^{\text{hol}}_T(k)$.  The former was shown in (\ref{hminusphi}) to equal
\begin{equation}\label{new616}
 h^{-}_T(r)   =  \frac{1}{8\pi^2} \int_0^{\infty} \mathcal{J}^-_r(x) \phi^{-}_T(x) \frac{dx}{x}
,
\end{equation}
where we recall from (\ref{ker}) and (\ref{KI}) that  $$ \mathcal{J}^-_r(x)    =   4\cosh(\pi r)K_{2ir}(x)  =  \pi i \frac{I_{2ir}(x)-I_{-2ir}(x)}{\sinh(\pi r)}.$$
If $r \leq 1$ and $x\leq 1$, the $I$-Bessel functions in the numerator are bounded by (\ref{Ibound}), while if $r\leq 1$ and $x>1$ the estimate \eqref{hm1} shows $\mathcal{J}^-_r(x)$ is bounded.  In either case  $\mathcal{J}^-_r(x) \ll 1/r$, so that
$$h^{-}_T(r)    \ll   \frac{1}{r} \int_0^{\infty} |\phi^{-}_T(x)|  \frac{dx}{x}    \ll  \frac{T}{r},$$
where we have used (\ref{inverse}) with $j=0$.

For $r\geq 1$ we apply a smooth partition of unity to (\ref{new616}) and write $h^{-}_T(r)=
{\mathcal I}_1(r)+{\mathcal I}_2(r)$, where
\begin{multline*}
{\mathcal I}_1(r)  :=  \int_0^{\infty} \mathcal{J}^-_r(x) \phi^{-}_T(x)w\left(\frac{x}{r^{1/3}}\right) \frac{dx}{x} \quad \text{and} \\
{\mathcal I}_2(r)  :=    \int_0^{\infty} \mathcal{J}^-_r(x) \phi^{-}_T(x)\left(1 - w\left(\frac{x}{r^{1/3}}\right)\right) \frac{dx}{x},
\end{multline*}
with $w$  a fixed,   smooth function on $\Bbb{R}$ such that  $w(x) \equiv 1$ on $[0, 1]$ and $w(x)\equiv 0$ on $[2,\infty)$.
We estimate ${\mathcal I}_2(r)$ trivially using  \eqref{inverse} and  \eqref{hm1}:
$${\mathcal I}_2(r) \ll   T \int_{r^{1/3}}^{\infty} (xT)^{-D/3} \left(\frac{r+x}{x}\right)^{1/10} \frac{dx}{x}     \ll    T^{1-\frac{D}{3}} r^{\frac{1}{15} - \frac{D}{9}}    \leq  Tr^{-5}$$
for $D \geq 50$. For ${\mathcal I}_1(r)$ we insert   the   power series expansion \eqref{power} to obtain
\begin{displaymath}
\begin{split}
{\mathcal I}_1(r) & \ll \sum_{k=0}^{\infty} \frac{1}{k!}\Bigl|\int_0^{\infty}  \frac{x^{\pm 2ir+2k}}{\Gamma(\pm 2ir +k+1)\sinh(\pi r)}\phi^{-}_T(x) w\left(\frac{x}{r^{1/3}}\right) \frac{dx}{x}\Bigr|\\
&  \ll    \sum_{k=0}^{\infty} \frac{1}{r^{k+1/2}k!}\Bigl|\int_0^{\infty}  x^{\pm2ir+2k}\phi^{-}_T(x) w\left(\frac{x}{r^{1/3}}\right) \frac{dx}{x}\Bigr|,\\
\end{split}
\end{displaymath}
where we have used $\Gamma(\pm2ir+k+1)=(\pm 2ir+k)\cdots (\pm 2ir+1)\Gamma(\pm 2ir+1)$ and then Stirling's formula to estimate the denominator.
We then apply integration by parts five times, integrating     $x^{\pm 2ir+2k-1}$ and differentiating
$ \phi_T^-(x)w(x/r^{1/3})$.  The latter gives   $$\sum_{i+j = 5} \left(\begin{array}{l} 5\\ j\end{array}\right) \left(\frac{d^{j}}{dx^{j}}\phi_T^-(x)\right)\frac{w^{(i)} (x/r^{1/3} )}{
r^{i/3}}.$$  Estimating trivially with (\ref{inverse}) (which is valid for $j\leq 5$ if $D>18$) and keeping in mind the integral is supported on $x\le 2r^{1/3}$, we find
 $${\mathcal I}_1(r) \ll
  \sum_{k=0}^{\infty} \frac{1}{r^{k+1/2}k!} \int_0^{2r^{1/3}} r^{-5}(2r^{1/3})^{2k}\max_{0 \leq j \leq 5} \Big|x^j \frac{d^j}{d x^j}\phi^{-}_T(x) \Big| \frac{dx}{x}   \ll
 \frac{T}{r^{5.5}}$$ This completes the analysis of $h^-_T(r)$.

We now turn to $h^{\text{hol}}(k)$, which by \eqref{hminusphi} and \eqref{ker1} can be written as
$$h^{\text{hol}}_T(k)   =  \frac{1}{8\pi^2} \int_0^{\infty} \mathcal{J}^{\text{hol}}_k(x) \phi^+_T(x) \frac{dx}{x}   =  \frac{i^k}{4\pi}\int_0^{\infty} J_{k-1}(x) \phi^+_T(x) \frac{dx}{x}.$$
We now employ the bound  \eqref{J-new} for $J_{k-1}(x)$ and the bound $\phi^+_T(x)\ll T\min(1,(xT)^{-D/3})$ (which follows from \eqref{inverse}), obtaining
$$
\aligned h^{\text{hol}}_T(k) &  \ll T \int_0^{(k-1)/4} \left(\frac{2x}{k-1}\right)^{k-1} \frac{dx}{x} + T\int_{(k-1)/4}^{\infty} (xT)^{-D/3} \frac{dx}{x} \\ & \ll T (2^{-k} + (kT)^{-D/3}),
\endaligned$$
uniformly in $k$.  This is clearly majorized by the claimed bound $Tk^{-5}$, completing the proof.
 \end{proof}

 Theorem \ref{thm1}   now follows as a consequence  of Lemma \ref{lem2}.

 \section{Nonvanishing and moments of $L$-functions}\label{sec6}

  In this section we prove Theorem \ref{thm2}, which essentially amounts to a proof of  the following lower bound.

\begin{lemma}\label{lem:lowerbound}
For $T$ sufficiently large we have
  \begin{equation*}
\frac{1}{2\pi} \int_{-\infty}^{\infty}  \frac{ |L(1/2 + it, \Pi)|^2}{|\zeta(1 + 2 it)|^2} h_T(t) dt    \gg   T \log T,
\end{equation*}
where $h_T$ is defined in {\rm (\ref{defH})}.
\end{lemma}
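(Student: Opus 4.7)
The strategy is the Rudnick--Soundararajan lower bound technique.  Set $w(t) := h_T(t)/|\zeta(1+2it)|^2 \geq 0$.  Since $\Pi$ is self-dual (so $\lambda_\Pi(n)\in\RR$), $|L(\tfrac 12+it,\Pi)|^2 = L(\tfrac 12+it,\Pi)\,L(\tfrac 12-it,\Pi)$, and Cauchy--Schwarz gives
\begin{equation*}
\int |L(\tfrac12+it,\Pi)|^2\,w(t)\,dt \;\geq\; \frac{\big|\int L(\tfrac12+it,\Pi)\,A(t)\,w(t)\,dt\big|^2}{\int |A(t)|^2\,w(t)\,dt}
\end{equation*}
for any Dirichlet polynomial $A(t)$.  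It then suffices to produce $A$ with numerator $\gg (T\log T)^2$ and denominator $\ll T\log T$.

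Take $A(t) := \sum_{n\leq N}\lambda_\Pi(n)\,n^{-1/2+it}$ with $N = T^\eta$ for a fixed $\eta\in(0,1)$, modelled on the complex conjugate of the leading Dirichlet segment of $L(\tfrac12+it,\Pi)$.  Expanding $|A|^2$ and extracting the diagonal yields
\begin{equation*}
\Big(\sum_{n\leq N}\frac{|\lambda_\Pi(n)|^2}{n}\Big)\int w(t)\,dt \;\asymp\; \log N \cdot T \;\asymp\; T\log T,
\end{equation*}
using Lemma~\ref{lem0} and (\ref{RSbound}) with partial summation for the arithmetic factor, and $\int w(t)\,dt \asymp T$ from the shape of $h_T$ together with the classical mean value $\int_0^T |\zeta(1+2it)|^{-2}\,dt \asymp T$.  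For the numerator, insert the approximate functional equation $L(\tfrac12+it,\Pi) = \sum_m \lambda_\Pi(m) m^{-1/2-it} V(m/t^2) + (\mathrm{dual~term})$ (of total length $\asymp t^2$); multiplied by $A(t)$, the diagonal $n=m$ in the product with the main AFE sum reproduces the same main term $\asymp T\log T$, while the dual term contributes only highly oscillatory integrals negligible by stationary phase in the archimedean root number.

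The off-diagonal contributions take the form $\sum_{n\neq m}\lambda_\Pi(n)\lambda_\Pi(m)(nm)^{-1/2}\hat\omega(\log(n/m))$ with $\omega\in\{w,\,V\cdot w\}$.  The Gaussian factor $e^{-(t/T)^2}$ inside $h_T$, combined with $(1/|\zeta(1+2it)|^2)^{(j)} \ll (\log|t|)^{O(j)}$ (which follows from $\zeta'/\zeta \ll \log|t|$ on $\Re s = 1$), yields $\omega^{(k)}(t) \ll T^{-k}(\log T)^{O(k)} e^{-(t/T)^2}$; $k$-fold integration by parts then gives $\hat\omega(y) \ll T(Ty)^{-k}(\log T)^{O(k)}$.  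For every off-diagonal pair with $n\leq N$ and $m\leq T^2$ the gap between distinct integers forces $|\log(n/m)| \gg T^{-\eta}$ (indeed pairs with $m$ near $n$ require both to be $\leq N+1$, while otherwise $|\log(n/m)|$ is bounded below by a constant), so choosing $k$ sufficiently large makes $\hat\omega(\log(n/m)) \ll T^{-A}$ for any prescribed $A>0$; combined with the estimate $\sum_{n\leq N}|\lambda_\Pi(n)|/\sqrt n \cdot \sum_{m\leq T^2}|\lambda_\Pi(m)|/\sqrt m \ll T^{1+\eta/2}(\log T)$ (from Cauchy--Schwarz and (\ref{RSbound})), the off-diagonal is $o(T\log T)$.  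The principal technical obstacle is tracking the logarithmic losses produced by differentiating $|\zeta|^{-2}$; the saving grace is that these losses are only fixed polynomial powers of $\log T$ for each fixed $k$, hence absorbed by any positive power of $T$ gained from the Gaussian once $N < T$.
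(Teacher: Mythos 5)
Your overall strategy (Rudnick--Soundararajan with an amplifier $A(t)=\sum_{n\leq N}\lambda_\Pi(n)n^{-1/2+it}$ and the diagonal evaluated via Lemma~\ref{lem0}) is the right one, but the way you apply Cauchy--Schwarz creates a gap that your argument does not survive. You keep the factor $|\zeta(1+2it)|^{-2}$ inside the integration weight $\omega$ on \emph{both} sides of the inequality, and then your off-diagonal analysis rests on the claimed bound $\omega^{(k)}(t)\ll T^{-k}(\log T)^{O(k)}e^{-(t/T)^2}$, hence $\widehat{\omega}(y)\ll T(T|y|)^{-k}(\log T)^{O(k)}$. This is false: in the Leibniz expansion the term in which all $k$ derivatives fall on $|\zeta(1+2it)|^{-2}$ is only $\ll (\log |t|)^{O(k)}e^{-(t/T)^2}$, with \emph{no} factor $T^{-k}$ --- differentiating the zeta factor costs logarithms but gains nothing in $T$, since $|\zeta(1+2it)|^{-2}$ oscillates on scale $O(1)$ in $t$, not on scale $T$. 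Consequently $\widehat{\omega}(y)$ does not decay like $T(T|y|)^{-k}$; indeed, writing $\zeta(1+2it)^{-1}\zeta(1-2it)^{-1}$ as a (not absolutely convergent) Dirichlet series with coefficients $\mu\ast\mu$ shows that $\widehat{\omega}$ has bumps of size $\asymp T$ at the points $y=2\log(a/b)$, so that, e.g., the off-diagonal pair $(n,m)=(1,4)$ in your denominator genuinely contributes $\asymp T$. Your ``saving grace'' sentence --- that the log losses are absorbed by a positive power of $T$ --- is exactly where the argument breaks: there is no positive power of $T$ to absorb them. The same defect infects the off-diagonal terms of your numerator, and trying to repair it by expanding $|\zeta(1+2it)|^{-2}$ into its Dirichlet series runs into the non-absolute convergence on the $1$-line.

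The paper's proof avoids this entirely by restructuring the Cauchy--Schwarz step: it writes
\begin{equation*}
\Bigl|\int L(\tfrac12+it,\Pi)\,A(t)\,w(t/T)\,dt\Bigr|^2 \;\leq\; \Bigl(\int \frac{|L(\tfrac12+it,\Pi)|^2}{|\zeta(1+2it)|^2}\,w(t/T)\,dt\Bigr)\Bigl(\int |A(t)\zeta(1+2it)|^2\,w(t/T)\,dt\Bigr),
\end{equation*}
so the awkward factor appears as $\zeta(1+2it)^2$ inside $I_2$ (where it is replaced by a short Dirichlet polynomial of length $T^{2/3}$, with the diagonal $m_2n_1=m_1n_2$ handled via multiplicativity, (\ref{RSbound}) and (\ref{delta1})), and the only weight ever multiplying Dirichlet polynomials is the fixed smooth compactly supported $w(t/T)$, whose Fourier transform $\check{w}(Ty)$ really does decay like $(1+T|y|)^{-B}$. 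The paper also sidesteps your approximate-functional-equation/stationary-phase treatment of $I_1$ by using a smoothed Dirichlet series of length $T^8$ for $L(\tfrac12+it,\Pi)$ with the tail controlled by convexity, but that is a secondary simplification; the essential missing idea in your write-up is the placement of the zeta factor in the Cauchy--Schwarz, without which the off-diagonal bounds you assert are unavailable.
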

\begin{proof}
 We follow the method of Rudnick and Soundararajan \cite{RS}.
  Let $w$ be a fixed, non-negative, smooth function with support on $[1/4, 1]$ such that $w(t) \equiv 1$   on $[1/2, 3/4]$.   Like all compactly supported smooth functions, its Fourier transform satisfies
 \begin{equation}\label{rapiddecaywcheck}
   \check{w}(y)   =   \int_{-\infty}^{\infty} w(x)   e^{-ixy}   dx   \ll_B   (1+|y|)^{-B}
 \end{equation}
  for any $B > 0$. Let $T$ be a  large parameter  and
  let $A(t) $ be any continuous function that is not identically zero on $[\frac{1}{2} T, \frac{3}{4}T]$. Then the Cauchy-Schwarz inequality implies \begin{equation}\label{clearly}
\begin{split}
 \frac{1}{2\pi} \int_{-\infty}^{\infty}  \frac{ |L(1/2 + it, \Pi)|^2}{|\zeta(1 + 2 it)|^2} h_T(t) dt  &  \gg      \int_{T/4}^{T}  \frac{ |L(1/2 + it, \Pi)|^2}{|\zeta(1 + 2 it)|^2} w\left(\frac{t}{T}\right) dt  \\ & \geq   \frac{|I_1|^2}{ I_2},
\end{split}
\end{equation}
with
\begin{multline*}
I_1 := \int_{T/4}^T L(1/2 + i t, \Pi) A(t) w\left(\frac{t}{T}\right) dt \quad \text{and} \\  I_2 := \int_{T/4}^T |A(t)\zeta(1 + 2 it)|^2 w\left(\frac{t}{T}\right) dt.
\end{multline*}
  We choose
  \begin{equation}\label{Adef}
    A(t)   :=   \sum_{m} \frac{\overline{\lambda_{\Pi}(m)}}{m^{1/2 - it}}  w_1\left(\frac{m}{T^{1/8}}\right),
  \end{equation}
  where $w_1$ is a smooth, non-negative function with support on $[0, 1]$  such that $w_1(t) \equiv 1$ on $[0, 3/4]$,  and $\lambda_\Pi(m)$ is as defined in (\ref{delta1}). Its Mellin transform $\widehat{w}_1(s)$ is rapidly decaying and has a simple pole with residue 1 at $s=0$.
For $X > 1$ and $t \in \Bbb{R}$ we have
$$
\aligned
\sum_{n} \frac{\lambda_{\Pi}(n)}{n^{1/2 + it}} e^{-n/X} & = \int_{\Re{s}=1} L(1/2 + s + it, \Pi) \Gamma(s) X^s \frac{ds}{2\pi i} \\ & = L(1/2 + i t, \Pi) + O_\varepsilon(X^{-1/2} (1+|t|)^{2+\varepsilon})
\endaligned
$$
after shifting the contour to $\Re s = -1/2$.  We conclude
$$  I_1   =   \int_{T/4}^T  \Bigl( \sum_{n} \frac{\lambda_{\Pi}(n) e^{-n/T^8}}{n^{1/2 + it}}  + O_\varepsilon(T^{\varepsilon-2})\Bigr)\sum_{m  }
   \frac{\overline{\lambda_{\Pi}(m)}}{m^{1/2 - it}} w_1\!\left(\!\frac{m}{T^{1/8}}\!\right) w\!\left(\!\frac{t}{T}\!\right)\! dt.  $$
   The $m$-sum can be crudely bounded  by $T^{1/8}$ using (\ref{delta1}), so that integration over $t$ yields
   \begin{displaymath}
\begin{split}
  I_1    & =   T  \sum_{n, m} \check{w}\left(-T \log \frac{m}{n}\right) \frac{\lambda_{\Pi}(n)  \overline{\lambda_{\Pi}(m)}}{(nm)^{1/2}} w_1\left(\frac{m}{T^{1/8}}\right)e^{-n/T^8}   +    O(T^{-1/2})\\
   & =   T  \check{w}(0)  \sum_{n}  \frac{|\lambda_{\Pi}(n)|^2}{n}  w_1\left(\frac{n}{T^{1/8}}\right)   +   O(T^{-1/2}),
   \end{split}
   \end{displaymath}
where we used the Taylor expansion $e^{-n/T^8} = 1 + O(n/T^8)$ and that $$T \Big|\log\frac{m}{n}\Big|   \geq    T \log \left( 1 + \frac{1}{m}\right)   \gg   \frac{T}{m}   \gg    T^{7/8}$$   for $m\not= n\leq  T^{1/8}$, so that by (\ref{rapiddecaywcheck}) the off-diagonal contribution is negligible.   We conclude from Lemma \ref{lem0} and partial summation that
 \begin{equation}\label{I1}
\begin{split}
I_1 
    \gg    T\log T \end{split}
\end{equation}
for $T$ sufficiently large.   

On the other hand, for $\frac{1}4 T\leq t \leq T$ and any $Y > 1$ we have
\begin{equation*}
\aligned
  \sum_{n} \frac{1}{n^{1 + 2 i t}} w_1\left(\frac{n}{Y}\right) & = \int_{\Re s=2} \zeta(1 +s + 2 i t) \widehat{w}_1(s) Y^s \frac{ds}{2\pi i}\\ &  = \zeta(1 + 2it) + O_\varepsilon(Y^{-1}T^{1/2+\varepsilon}+T^{-100})
\endaligned
\end{equation*}
 upon shifting the contour to $\Re s = -1$, where the term $T^{-100}$ comes from the residue at $s=-2it$ and the rapid decay of the Mellin transform $\widehat{w}_1$.   Thus 
  \begin{equation*}\label{zeta12it2}
   \zeta(1+2it)^2  =  \Bigl(\sum_{n}n^{-1-2it}w_1\Bigl(\frac{n}{T^{2/3}}\Bigr)\Bigr)^2  +   O_\varepsilon(T^{-1/6+\varepsilon}),
\end{equation*}
since the $n$-sum is  $O(\log T)$.
Inserting this and (\ref{Adef})    into the definition of $I_2$ shows that
\begin{equation}\label{I2bound}
  I_2  = I_{2,\text{main}}   +  O_\varepsilon\left(\int_{T/4}^T|A(t)|^2T^{-1/6+\varepsilon}dt\right),
\end{equation}
where
 \begin{displaymath}
 \begin{split}
 I_{2,\text{main}}  = \int_{T/4}^T    \sum_{m_1, m_2}  & \frac{\overline{\lambda_{\Pi}(m_1)}\lambda_{\Pi}(m_2)}{m_1^{1/2 - it}m_2^{1/2+it}} w_1\left(\frac{m_1}{T^{1/8}}\right)w_1\left(\frac{m_2}{T^{1/8}}\right)      \times  
\\ & \sum_{n_1, n_2} \frac{w_1\left(\frac{n_1}{T^{2/3}}\right) w_1\left(\frac{n_2}{T^{2/3}}\right)}{n_1^{1+2it} n_2^{1 - 2it}}  w\left(\frac tT\right)dt\\
  =  T \sum_{\substack{m_1, m_2\\ n_1, n_2}} & \check{w}\left(T \log \frac{m_2n_1}{m_1n_2}\right) \frac{\overline{\lambda_{\Pi}(m_1)}\lambda_{\Pi}(m_2)}{(m_1m_2)^{1/2}n_1n_2 }    \   \times 
  \\ & w_1\left(\frac{m_1}{T^{1/8}}\right)w_1\left(\frac{m_2}{T^{1/8}}\right)w_1\left(\frac{n_1}{T^{2/3}}\right) w_1\left(\frac{n_2}{T^{2/3}}\right).  
\end{split}
\end{displaymath}
Since $A(t)=O(T^{1/16})$ (as follows from partial summation, Cauchy-Schwarz, and (\ref{RSbound})), the error term in (\ref{I2bound}) is $O_\varepsilon(T^{23/24+\varepsilon})=O(T^{24/25})$.

 The test function $w_1$ constrains $1\leq m_1,m_2\leq T^{1/8}$ and $1\leq n_1,n_2\leq T^{2/3}$.  For $m_1n_2 \not= m_2n_1$ we have   $ T|\log \frac{m_2n_1}{m_1n_2} |\gg T^{5/24}$, so that the off-diagonal contribution to $I_{2,\text{main}}$ is negligible because of (\ref{rapiddecaywcheck}). Every quadruple $(m_1, m_2, n_1, n_2)$ with $m_2n_1 = m_1 n_2$ must be of the form $(rs,rt,us,ut)$  for some integers $r, s, t, u$, hence enlarging the range of summation we have
 $$I_2   \ll    T\sum_{r, s, t, u \leq T} \frac{|\lambda_{\Pi}(rs)\lambda_{\Pi}(rt)|}{r (st)^{3/2} u^2}
 \ll
  T\sum_{r, s, t  \leq T} \frac{|\lambda_{\Pi}(rs)\lambda_{\Pi}(rt)|}{r (st)^{3/2} }
 . 
 $$
 Fix  $0<\varepsilon<\delta$. Using
 $$ \lambda_{\Pi}(rs) s^{-1/2 + \varepsilon}  \lambda_{\Pi}(rt) t^{-1/2 + \varepsilon}     \leq   |\lambda_{\Pi}(rs)|^2 s^{-1 + 2\varepsilon}   +  |\lambda_{\Pi}(rt)|^2 t^{-1 + 2\varepsilon},$$
we obtain
$$I_2  \ll   T \sum_{r, s \leq T} \frac{|\lambda_{\Pi}(rs)|^2}{rs^{2 - \varepsilon}}.$$
Write $d = \gcd(r, s)$, $r = r_0 d f$ and $s = s_0 d g$, with $\gcd(r_0s_0, d) = 1$ and $f, g\mid d^\infty$ (i.e., both $f$ and $g$ divide   some power of $d$). By the multiplicativity of $\lambda_{\Pi}$, we may factorize $\lambda_\Pi(rs)=\lambda_\Pi(r_0)\lambda_\Pi(s_0)\lambda_\Pi(d^2 fg)$.  Then
\begin{equation*}
\begin{split}
  I_2   \ll  &T
  \sum_{d\leq T}\sum_{\begin{smallmatrix}f,g\leq T \\ f,g\mid d^\infty\end{smallmatrix}}\sum_{r_0,s_0\leq T} \frac{|\lambda_\Pi(r_0)\lambda_\Pi(s_0)\lambda_\Pi(d^2 fg)|^2}{r_0 s_0^{2-\varepsilon} d^{3-\varepsilon} f g^{2-\varepsilon}}\\ \ll   &T(\log T)
  \sum_{d\leq T}\sum_{\begin{smallmatrix}f,g\leq T \\ f,g\mid d^\infty\end{smallmatrix}} \frac{| \lambda_\Pi(d^2 fg)|^2}{d^{3-\varepsilon} f g^{2-\varepsilon}}
\\     \ll  & T  (\log T) \sum_{d\leq T} \frac{1}{d^{1+4\delta-\varepsilon}}
  \sum_{f,g\mid d^\infty}\frac{1}{f^{2\delta}g^{1+2\delta-\varepsilon}},
  \end{split}
\end{equation*}
where we used (\ref{RSbound}) and partial summation to bound the $r_0, s_0$-sum and then applied (\ref{delta1}) in the last step. The $g$-sum is $O(1)$ since $\varepsilon<\delta$.  The $f$-sum is bounded by any small power of $d$, as can be seen by factoring it as a product over primes.  Thus the $d$-sum is $O(1)$ and   we  altogether obtain $I_2 \ll T \log T$.
 Together with \eqref{I1} we can therefore bound the left hand side of \eqref{clearly} from below by $\gg T \log T$ as desired. This completes the proof of the lemma.
\end{proof}

\begin{proof}[Proof of Theorem \ref{thm2}]
 Combining (\ref{defM}), Theorem~\ref{thm1}, and Lemma~\ref{lem:lowerbound}, we have
\begin{displaymath}
\begin{split}
 - \sum_{\pi} \epsilon_{\pi} &  \frac{L(1/2,   \Pi \times \pi ) }{L(1, \text{{\rm Sym}}^2 \pi)} h_T(t_{\pi}) \\ & =    \frac{1}{2\pi} \int_{-\infty}^{\infty}  \frac{|L(1/2 + it, \Pi)|^2}{|\zeta(1 + 2 it)|^2} h(t) dt   -\mathcal{M}^-(h_T)  \gg T\log T
\end{split}
\end{displaymath}
for $T$ sufficiently large. Fix $\varepsilon > 0$ and choose $D=50+\varepsilon^{-1}$ in \eqref{defH}. By \eqref{convex} the contribution of representations $\pi$ with $t_{\pi} \leq T^{1-\varepsilon}$ or $t_{\pi} \geq  T^{1+\varepsilon}$ is $O_{\Pi, \varepsilon}(1)$, hence there must be a non-zero $L$-value in the interval $T^{1-\varepsilon} \leq t_{\pi} \leq T^{1+\varepsilon}$, provided $T$ is sufficiently large in terms of  $\Pi$ and $\varepsilon$. Hence the number of non-vanishing $L$-values with $t_{\pi} \leq X$ is at least $c_1 \varepsilon^{-1} (\log\log X - c_2)$ for some absolute constant $c_1 > 0$ and some constant $c_2  = c_2(\Pi, \varepsilon) > 0$, proving Theorem \ref{thm2}.
\end{proof}


\appendix

 \section{Bessel functions}\label{bessel}

 In this section we compile some useful facts about the $I$-, $J$-, and $K$-Bessel functions  for easy reference. We have \cite[8.486.2 and 11]{GR}
 \begin{equation*}
    K'_{\nu}(x)  = - \frac{1}{2}(K_{\nu+1}(x) + K_{\nu-1}(x)), \quad I'_{\nu}(x)  =   \frac{1}{2}(I_{\nu+1}(x) + I_{\nu-1}(x))
 \end{equation*}
 for $x > 0$ and $\nu \in \Bbb{C}$, and hence
 \begin{equation}\label{diff}
    K^{(j)}_{2it}(x)   =   \left(-\frac{1}{2}\right)^j \sum_{n=0}^j \left(\begin{array}{c} j\\ n\end{array}\right) K_{2it - j + 2n}(x)
 \end{equation}
  and
  \begin{equation}\label{diff1}
    I^{(j)}_{2it}(x)  =   \left(\frac{1}{2}\right)^j \sum_{n=0}^j \left(\begin{array}{c} j\\ n\end{array}\right) I_{2it - j + 2n}(x)
 \end{equation}
for $j \in \Bbb{Z}_{\geq 0}$.   A weak version of \cite[Proposition 9]{HM} states that
 \begin{equation}\label{hm1}
  {\mathcal J}_t^-(x) = 4 \cosh(\pi t) K_{2it}(x)    \ll_{\Im t}    e^{\min(0, -x + \pi |\Re t|)} \left(  \frac{1 + |t| + x}{x}\right)^{2|\Im t| + 1/10}
 \end{equation}
 for $x > 0$ and $t \in \Bbb{C}$. The Bessel $K$-function and the Bessel $I$-function are related \cite[8.485]{GR} via
 \begin{equation}\label{KI}
  \sinh(\pi t)K_{2it}(x)    =  \frac{\pi i}{4} \frac{I_{2it}(x) - I_{-2it}(x)}{\cosh(\pi t)}.
 \end{equation}
 It follows from the power series expansion \cite[8.445]{GR} and Stirling's formula that
 \begin{equation}\label{power}
 I_{\nu}(x)  =  \sum_{k=0}^{\infty} \frac{1}{k! \Gamma(\nu + k + 1)} \left(\frac{x}{2}\right)^{\nu+2k}
 \end{equation}
    that
 \begin{equation}\label{Ibound}
   e^{-\pi |t|}  I_{2it}(x)     \ll_{\Im t}   \frac{x^{-2\Im t}}{(1+|t|)^{1/2 - 2\Im t}}
 \end{equation}
 for $t \in \Bbb{C}$ and $0 < x < (1 + |t|)^{1/2}$ (the estimate is trivial when $t$ is constrained to a compact region; for $\Re t$ large the bound guarantees the terms in (\ref{power}) are bounded by a constant multiple of the $k=0$ term).

 The Bessel kernels defined in \eqref{ker} and \eqref{ker1} have   Mellin transforms
 \begin{equation}\label{kerhat}
 \begin{split}
  \widehat{\mathcal{J}}^{+}_t(s)    & =   2^{s}    \Gamma(s/2 + it) \Gamma(s/2 - it)  \cos(\pi s/2),\\
   \widehat{\mathcal{J}}^{-}_t(s)    & =     2^{s}  \Gamma(s/2 + it) \Gamma(s/2 - it) \cosh(\pi t), \ \text{and} \\
\widehat{\mathcal{J}}^{\text{hol}}_k(s)    & =    
2^s \cos\left(\frac{\pi s}{2}\right) \Gamma\left(\frac{s+1-k}{2}\right)\Gamma\left(\frac{s+k-1}{2}\right)\\ & = \frac{i^k 2^s \pi \Gamma(\frac{1}{2}(s+k-1))}{\Gamma(\frac{1}{2}(1+k-s))},
\end{split}
\end{equation}
where $k\geq 2$ is even.
These follow from \cite[17.43.16 \& 18]{GR} together with functional equation of the Gamma function.

If $\phi$ is any even Schwartz  function bounded by a constant multiple of $e^{-2\pi |t|}$, then
\begin{equation}\label{four}
\int_{-\infty}^\infty {\mathcal J}_t^{-}(x)\phi(t)dt   = 2\int_{-\infty}^\infty e^{i x\sinh(v)}\check{\phi}(2v)\, dv,
 \end{equation}
where $\check{\phi}(v) = \int_{-\infty}^\infty \phi(x)e^{-ivx}dx$.  This is Parseval's equality, with the analytic subtlety that the Fourier transform of the tempered distribution  ${\mathcal J}_t^{-}(x)$ given in  \cite[8.432.4]{GR} is not integrable.  To prove this identity, first assume $\phi$ has compact support and insert the absolutely convergent integral formula $K_{2it}(x)=\frac{1}{2}\int_{-\infty}^\infty e^{-x\cosh(u)-2itu}du$ \cite[8.432.1]{GR} into
definition  (\ref{ker}) to rewrite the left hand side of (\ref{four}) as
$$
2\int_{-\infty}^\infty \int_{-\infty}^\infty  e^{\pi   t}e^{-x\cosh(u)-2itu}\phi(t) \, dt\, du.
$$  This double integral is also absolutely convergent, with an integrand that decays doubly-exponentially in $u$ uniformly in horizontal strips.  Shifting the $u$-contour down to $\Im{u}=-\pi/2$ and changing the order of integration establishes (\ref{four}).   (Note that the bound
(\ref{combine1}) assures the $O(e^{-2\pi |t|})$ condition, so that (\ref{four}) applies to (\ref{v-int1})).

 Finally we state the   uniform bound
\begin{equation}\label{J-new}
   J_{k}(x) \ll \min\left((2x/k)^k, 1 \right)
\end{equation}
for $k \in \Bbb{N}$. The first bound    follows from \cite[Lemma 4.1]{Ra} for $k \geq 15$ and from the power series expansion for $1 \leq k \leq 14$ and $x \leq 1$, while the second bound follows from   \cite[8.411.1]{GR}.

\end{document}